\documentclass[a4paper,12pt, reqno]{amsart}
\usepackage{amsfonts}
\usepackage{amsmath, array, bigints}
\usepackage{amssymb}
\usepackage{mathrsfs}
\usepackage[colorlinks]{hyperref}
 \usepackage{graphicx}
\usepackage{enumerate}

 \usepackage{esint}
 \usepackage[usenames]{color}

\usepackage{mathtools}
\makeatletter
\newcommand{\setword}[2]{%
  \phantomsection
  #1\def\@currentlabel{\unexpanded{#1}}\label{#2}%
}
\makeatother
\newtheorem{thm}{Theorem}[section]
\newtheorem{cor}[thm]{Corollary}
\newtheorem{lem}[thm]{Lemma}

\numberwithin{equation}{section}
\usepackage[english]{babel} 
\usepackage{blindtext}

\theoremstyle{definition}
\newtheorem{definition}[thm]{Definition}
\newtheorem{rem}[thm]{Remark}

\begin{document}

\allowdisplaybreaks 
\def\dist{{\operatorname{dist}}}
\def\loc{{\operatorname{loc}}}
\def\ess{{\operatorname{ess~sup}}}
\def\essi{{\operatorname{ess~inf}}}
\def\esslim{{\operatorname{ess~lim~inf}}}
\def\supp{{\operatorname{supp}}}
\renewcommand{\d}{\:\! \mathrm{d}}


 \title[Regularity of superposition operators]{Regularity of superposition operators of mixed fractional order}

 \author[Souvik Bhowmick, Sekhar Ghosh, Vishvesh Kumar, and R. Lakshmi]{Souvik Bhowmick, Sekhar Ghosh, Vishvesh Kumar and R. Lakshmi}

\address[Souvik Bhowmick]{Department of Mathematics, National Institute of Technology Calicut, Kozhikode, Kerala, India - 673601}
\email{souvikbhowmick2912@gmail.com / souvik\_p230197ma@nitc.ac.in}

\address[Sekhar Ghosh]{Department of Mathematics, National Institute of Technology Calicut, Kozhikode, Kerala, India - 673601}
\email{sekharghosh1234@gmail.com / sekharghosh@nitc.ac.in}
\address[Vishvesh Kumar]{Department of Mathematics: Analysis, Logic and Discrete Mathematics, Ghent University, Ghent, Belgium}
\email{vishveshmishra@gmail.com / vishvesh.kumar@ugent.be}
\address[R. Lakshmi]{Department of Mathematics, National Institute of Technology Calicut, Kozhikode, Kerala, India - 673601}
\email{lakshmir1248@gmail.com / lakshmi\_p220223ma@nitc.ac.in}
\date{}

\begin{abstract}
We extend the De Giorgi--Nash--Moser theory to superposition operators of mixed fractional operators. In particular, we investigate several regularity properties for this class of operators. We establish the Caccioppoli-type inequality with tail for weak subsolutions, local boundedness of weak subsolutions, local H\"older continuity of weak solutions, the weak Harnack inequality for weak supersolutions, and the lower semicontinuity of weak supersolutions. Furthermore, we prove the expansion of positivity, a preliminary Harnack inequality, and the upper semicontinuity of weak subsolutions.

Our results apply to both fixed-sign and sign-changing solutions involving mixed local--nonlocal superposition fractional operators. Notably, the results are new even in the classical linear case $p=2$, demonstrating the broader applicability of the techniques developed in this work.
\end{abstract}

\subjclass[2020]{35B65, 35D30, 35B45, 35R09, 35R11, 35M12}
\keywords{Regularity theory, Superposition operators, Caccioppoli inequality, Local boundedness, H\"older continuity, Weak Harnack inequality, Lower and upper semicontinuity.}

\maketitle

\section{Introduction}\label{int}
\subsection{Motivation and State of the Art}
In the present paper, we investigate the regularity theory of the weak solution of the following nonlinear superposition fractional operator
\begin{equation}\label{SP}
    A_{\mu, p} u:=\int_{[0,1]}(-\Delta)_{p}^{s} u \d \mu(s)=\int_{(0,1)}(-\Delta)_{p}^{s} u \d \mu(s)-\alpha \Delta_p u,
\end{equation} 
 where $\mu$ is a nonnegative, nontrivial finite Borel measure over $[0,1]$ with  $\alpha:=\mu(\{1\})\geq 0$ and $\mu(\{0\})= 0.$ The local $p$-Laplace operator is defined by 
 $\Delta_pu = \text{div}(|\nabla u|^{p-2} \nabla u)$ and the notation $(-\Delta)_p^s$ is conventionally assigned to the fractional $p$-Laplacian, defined for all $s \in (0,1)$ by
$$
(-\Delta)_{p}^{s} u(x):=2 C_{N,p,s} \lim _{\varepsilon \backslash 0} \int_{\mathbb{R}^{N} \backslash B_{\varepsilon}(x)} \frac{|u(x)-u(y)|^{p-2}(u(x)-u(y))}{|x-y|^{N+s p}} \d y.
$$
The positive normalizing constant $C_{N,p,s}$ is chosen in such a way that, it provides consistent limits as $s \nearrow 1$ and as $s \searrow 0$, namely
\begin{align*}
& \lim _{s \searrow 0}(-\Delta)_{p}^{s} u=(-\Delta)_{p}^{0} u:=|u|^{p-2}u, \\
& \lim _{s \nearrow 1}(-\Delta)_{p}^{s} u=(-\Delta)_{p}^{1} u:=-\Delta_{p} u=-\operatorname{div}\left(|\nabla u|^{p-2} \nabla u\right).
\end{align*}

The operator $A_{\mu, p}$ was introduced in \cite{DPSV}, which extended the construction of the linear case (i.e., $p=2$) discussed in \cite{DPSV2}. Particular cases for the operator in \eqref{SP} are (minus) the $p$-Laplacian (corresponding to the choice of $\mu$ being the Dirac measure concentrated at 1 ), the fractional $p$-Laplacian $(-\Delta)_p^{s}$ (corresponding to the choice of $\mu$ being the Dirac measure concentrated at some fractional power $s \in (0, 1)$ ), the ``mixed order operator" $-\Delta_p+\epsilon(-\Delta)_p^{s}$ (when $\mu$ is the sum of two Dirac measures $\delta_1+\epsilon\delta_s,\,s \in (0, 1)$ and $\epsilon\in (0,1]$ ) etc. 
{\it An intriguing and novel aspect of the operators studied in this work is their ability to simultaneously handle nonlinear operators and an infinite (potentially uncountable) family of fractional operators.} Beyond their theoretical significance, these operators are instrumental in fields like mathematical biology to model population dispersal. They are particularly effective at capturing diverse movement strategies, ranging from standard Gaussian diffusion to Lévy flights (see \cite{ DPSV25,DV21}).

There is an extensive body of literature on the regularity theory for the particular case of the nonlocal operator $A_{\mu,p}$, which is too vast to be exhaustively cited here. In particular, when $\mu$ is the Dirac measure concentrated at $1$, that is, $A_{\mu,p} = -\Delta_p$, we refer to the classical monograph \cite{MZ97}, the book \cite{Lind19}, and the references therein.

On the other hand, for the regularity theory of the nonlocal $p$-Laplacian, namely $A_{\mu,p} = (-\Delta)^s_p$, we refer to \cite{ DKP2014, DKP2016,Kass11} and references therein. More precisely, Di Castro, Kuusi, and Palatucci \cite{DKP2014} established a general Harnack inequality for minimizers of nonlocal, possibly degenerate, integro-differential operators, with the fractional $p$-Laplacian $(-\Delta)^s_p$ as a model case. This result extends the Harnack inequality in the linear setting ($p=2$), previously obtained by Kassmann \cite{Kass11}.

It is known that a scale-invariant Harnack inequality holds for globally nonnegative solutions of $(-\Delta)^s_p u = 0$. However, Kassmann \cite{Kass11} constructed a counterexample demonstrating that the positivity assumption is essential and cannot be weakened or removed, even in the linear case $p=2$. To address this issue, he introduced a revised formulation of the Harnack inequality that avoids the requirement of global positivity in $\mathbb{R}^n$. This is achieved by incorporating an additional term on the right-hand side, a natural {\it tail contribution}, which captures the inherent nonlocal effects of the fractional Laplacian. For the fractional $p$-Laplacian, Di Castro, Kuusi, and Palatucci \cite{DKP2014, DKP2016} established several fundamental regularity results, including a Harnack inequality, local boundedness, Hölder continuity, and a Caccioppoli estimate involving a tail term for the weak solutions. One of their most significant and influential contributions is the introduction of the {\it nonlocal tail} introduced in \cite{DKP2016}, which plays a crucial role in proving these results and in effectively handling the nonlocal nature of the fractional $p$-Laplacian. Particularly, in \cite{DKP2016}, the authors extend the De Giorgi-Nash-Moser theory (see \cite{DeG57, Mos61, Nash58}) to a class of fractional $p$-Laplace equations. They provide the existence of a unique minimizer to homogeneous equations and prove local regularity estimates for weak solutions.  For the related results in this direction, we cite \cite{Cozz17, DRSV22,DSV17, DK2020,Kass07, OS26} and references therein. 

We now turn to a specific case of the operator $A_{\mu,p}$ in which $\mu$ is the sum of two Dirac measures, $\delta_1 + \delta_s$ with $s \in (0,1)$. This corresponds to the mixed local and nonlocal operator
\begin{equation} \label{mixed}
-\Delta_p u + (-\Delta)_p^s u = 0.
\end{equation}

The first results on the regularity theory of \eqref{mixed} were obtained by Foondun \cite{Foo09}, who proved the Harnack inequality and local H\"older continuity for nonnegative solutions in the linear case $p=2$. In the nonlinear setting, Garain and Kinnunen \cite{GK2022} studied \eqref{mixed} and established local boundedness, H\"older continuity, and a Harnack inequality, as well as the semicontinuity of weak solutions. Their approach is purely analytic and is based on the De Giorgi–Nash–Moser theory. A variety of further aspects of this class of equations have been thoroughly investigated, including results on existence, uniqueness, and higher H\"older regularity by Garain and Lindgren \cite{GL23}, as well as questions concerning measure data by Byun and Song \cite{BS2023}. We also highlight a recent contribution by De Filippis and Mingione \cite{DeFG24}, who established results on maximal regularity and global H\"older continuity of minimizers for mixed local and nonlocal functionals with nonuniform growth. These findings were further complemented by Ding, Fang, and Zhang \cite{DYZ24}, who investigated local boundedness, local H\"older continuity, and the Harnack inequality in this setting.

Now, let us go back to the superposition operator $A_{\mu,p}$, which has been a subject of interest in studying several important problems in PDEs. The operator $A_{\mu,p}$ is a particular case of  the general superposition operator $B_{\upsilon, p}$ of nonlinear fractional operators, initially introduced by Dipierro \textit{et al.} in \cite{DPSV2} for $p=2$ and subsequently generalized in \cite{DPSV} for $1<p<\infty$, which is defined as follows:
\begin{equation}\label{superposition operator}
    B_{\upsilon, p} u:=\int_{[0,1]}(-\Delta)_{p}^{s} u \d \upsilon(s), 
\end{equation} 
where $\upsilon$ is a signed measure given by
\begin{equation*} 
    \upsilon:=\upsilon^{+}-\upsilon^{-}. 
\end{equation*} Moreover, the measures $\upsilon^{+}$ and $\upsilon^{-}$ are two nonnegative finite Borel measures on $[0,1]$ holds the following conditions: 
\begin{equation}\label{measure 1}
    \upsilon^{+}([{s'}, 1])>0,
\end{equation}
\begin{equation*}
    \upsilon^{-}|_{[{s'}, 1]}=0,
\end{equation*}
and
\begin{equation*}
\upsilon^{-}([0, {s'}]) \leq \kappa \upsilon^{+}([{s'}, 1]) ,
\end{equation*}
for some ${s'} \in(0,1]$ and $\kappa \geq 0.$ From \eqref{measure 1}, there exists another (fractional) exponent $s_{\sharp} \in[{s'}, 1]$ such that 
\begin{equation*}
\upsilon^{+}\left(\left[s_{\sharp}, 1\right]\right)>0. 
\end{equation*}

Note that $\upsilon$ is a signed measure; thus, an important example of the operators is that certain components have a ``wrong sign". There are two works by Biagi \textit{et al.} \cite{BDVV2026} and Perera and Sportelli \cite{PK2024}, which are related to the ``wrong sign." 
For the linear case of the superposition operator $B_{\upsilon, 2}$,   the initial study done by Dipierro \textit{et al.} \cite{DPSV2} established the existence of nontrivial solutions to \eqref{superposition operator} involving jump nonlinearities and critical growth. Subsequently, in \cite{DPSV25}, the authors proved the existence of a positive eigenvalue associated with \eqref{superposition operator} and analyzed a stationary logistic equation of Fisher–Kolmogorov–Petrovskii–Piskunov type involving this operator.  Dipierro \textit{et al.} \cite{DLSV2026} studied spectral properties of \eqref{superposition operator}. Moreover, they prove that the regularity result, $u\in W^{2,q}(\Omega)$, when $u\in H_0^1(\Omega)$ is a weak solution of the following mixed local-nonlocal superposition problem:
\begin{align*}
    \int_{[0,\Tilde{s}]}(-\Delta)^{s} u \d \upsilon(s)- \Delta u=&f(x) \text{ in  }\Omega, \nonumber \\
    u=&0 \text{ in  }\mathbb{R}^N\setminus \Omega,
\end{align*}
where $\Omega$ is open bounded subset of $\mathbb{R}^N$ with boundary class of $C^1$, $\Tilde{s}\in [0,\frac{1}{2})$, $f\in L^q(\Omega)$ with $q\in (1,\infty)$. 
More recently, in \cite{DLSV2025}, the authors showed that {\it the maximum principle fails for \eqref{superposition operator} when the measure $\upsilon$ changes sign}. However, they proved that the maximum principle holds for the operator \eqref{SP}. In \cite{ABB2025}, Afonso \textit{et al.} established the existence of weak solutions to an asymptotic problem associated with \eqref{superposition operator}, viewed as a perturbation of the corresponding eigenvalue problem. Furthermore, in \cite{BMS2025}, the existence of solutions involving critical exponents and subcritical growth was obtained. Very recently, Proietti Lippi and Sportelli \cite{PS2026} proved the existence of a ground state solution for a Choquard equation involving the superposition operator. Finally, in \cite{DJV2025}, Dipierro \textit{et al.} studied a general framework for superpositions of operators.

In their pioneering work, Dipierro \textit{et al.} \cite{DPSV} introduced the nonlinear superposition operators and established the existence of $m$ pairs of distinct nontrivial solutions to the problem:
\begin{align}\label{problem 3}
B_{\upsilon,p} u &= \lambda |u|^{q-2}u+|u|^{p^*_{s_\sharp}-2}u ~\text{in}~\Omega,  \nonumber\\
 u & =0 ~\text{in}~ \mathbb{R}^N \setminus \Omega,
 \end{align}
 for the case $q=p$. Recently, the middle two authors, together with Aikyn and Ruzhansky \cite{AGKR2025}, studied a Brezis--Nirenberg type problem involving superposition operators. In \cite{AGKR2025i}, they further analyzed spectral properties and shape optimization of \eqref{superposition operator} via variational methods. They also established the weak and strong maximum principles, along with logarithmic estimates for \eqref{SP}, by introducing the notion of a nonlocal tail. These results are obtained for \eqref{SP} since the maximum principle holds if and only if $\upsilon$ has constant sign. Very recently, the first three authors \cite{BGK2026a} proved the existence of nontrivial solutions to \eqref{problem 3} for $1<q<p<p^*_{s_\sharp}$ using variational methods, truncation techniques, and genus theory. In \cite{BGK2026}, they further established the existence of infinitely many solutions to \eqref{superposition operator} under various superlinear growth conditions using the Palais--Smale and Cerami conditions, along with the Fountain Theorem. 
We also refer to \cite{ DLSV2024,DPSV1} for the general theory of $(s,p)$ superposition operators and related optimal embedding results in fractional Sobolev spaces.

 Despite significant progress on nonlinear problems involving the superposition operator $A_{\mu,p}$, as well as more general superposition operators, the regularity theory for $A_{\mu,p}$ remains largely unexplored. The primary aim of this paper is therefore to address this gap and develop a regularity theory for $A_{\mu,p}$. We emphasize that our results are new even in the linear case $p=2$. The analysis is particularly challenging due to the mixed local-nonlocal structure of the operator, which simultaneously incorporates an infinite (possibly uncountable) family of fractional operators of different orders. Consequently, one must handle not only the intrinsic nonlocal interactions and the interplay between distinct differentiability scales, but also the additional difficulties arising from the nonlinear nature of the operator. One of the fundamental tools in our analysis is the {\it nonlocal superposition tail} introduced in \cite{AGKR2025i}, naturally associated with the operator $A_{\mu,p}$. For $u \in W^{s,p}_{\mu}(\mathbb{R}^N)$ (see Section \ref{pre}), the nonlocal superposition tail in the ball $B_r(x_0)$ is defined by
\begin{equation} \label{tailinro}
\operatorname{Tail}(u;x_0,r)
=
\left[
\int_{(0,1)}
C_{N,s,p}\, r^{sp}
\left(
\int_{\mathbb{R}^N \setminus B_r(x_0)}
\frac{|u(y)|^{p-1}}{|y-x_0|^{N+sp}}
\,dy
\right)
d\mu(s)
\right]^{\frac{1}{p-1}}.
\end{equation}

We point out that the nonlocal superposition tail was originally introduced in \cite{AGKR2025i} without the normalizing constant $C_{N,s,p}$. However, the inclusion of this normalization serves our purposes well and plays an important role in several parts of the analysis.

\subsection{Assumptions, Main Results and Discussions} Throughout the paper, we assume the following condition on the measure defining the superposition operators; any additional assumptions will be stated explicitly when needed.  We assume that the measure $\mu$ defining the superposition operator $A_{\mu,p}$
given by 
$$A_{\mu, p} u:=\int_{[0,1]}(-\Delta)_{p}^{s} u \d \mu(s)=\int_{(0,1)}(-\Delta)_{p}^{s} u \d \mu(s)-\alpha \Delta_p u$$


where $\mu$ is a nonnegative and finite Borel measure $[0,1],$ and nontrivial on $(0, 1)$ with $\mu(\{0\}) = 0.$ The condition $\mu(\{0\}) = 0$ is imposed to preserve the scaling properties of the operator $A_{\mu,p}$.  We set 
\[
\alpha := \mu(\{1\}) \ge 0, \qquad \Sigma := \operatorname{supp}\{\mu\}.
\]

Since $\mu$ is nontrivial, $\Sigma$ is nonempty and there exists a exponent $s'\in (0,1]$ such that 
\begin{equation}\label{m4}
\mu\left(\left [s', 1\right]\right)>0. 
\end{equation} 
Moreover, using \eqref{m4}, there exists another fractional exponent $\bar{s_\sharp}\in [{s'},1]$ such that 
\begin{equation*}
\mu\left(\left [\bar{s_\sharp}, 1\right]\right)>0.
\end{equation*} 

Therefore, we define
\begin{equation}\label{m5}
    s_\sharp =\begin{cases}
        & \bar{s_\sharp} ~\text{ if } \alpha=0, \\
        & 1 ~ \text{ if } \alpha> 0.
    \end{cases}
\end{equation}

Subsequently, we will see that $s_\sharp$ also serves as a critical exponent, where the fractional critical exponent $p_{s_{\sharp}}^{*}$ is defined as $ p_{s_{\sharp}}^{*}:=\frac{N p}{N-s_{\sharp} p}$ related to the fractional exponent $s_{\sharp}$ for which \eqref{m5} holds. Notably, there is some flexibility in choosing $s_\sharp$ above. However, the results obtained will be stronger if $s_\sharp$ is chosen to be `as large as possible' while still satisfying \eqref{m4}. In this work, we focus exclusively on the case $\alpha>0$ in \eqref{SP}, corresponding to a purely mixed local–nonlocal superposition operator. The case $\alpha=0$ will be addressed in forthcoming work.
\par In the present work, we focus on studying the regularity of weak solutions to the following problem
\begin{align}\label{MP} \tag{P}
    A_{\mu, p} u &= 0 \quad \text{in } \Omega,
\end{align}
where $p \in (1,\infty)$ and $\Omega$ is a bounded domain in $\mathbb{R}^N$.

  {\it A distinctive feature of these operators is that they simultaneously encode nonlinear effects and an infinite (possibly uncountable) family of fractional operators.} Some of the interesting examples for the measure $\mu$ over $(0,1)$ and $\alpha>0$, which are covered in this paper are as follows:
\begin{enumerate}
    \item When $\mu=\delta_s$ ( where $\delta_s$ represent the Dirac measure centred at $s\in(0,1)$) and $\alpha=1$, then mixed local-nonlocal operators and mixed local-nonlocal $p$-Laplace operators are $( -\Delta+(-\Delta)^s)$ and $( -\Delta_{p} +(-\Delta)_p^s)$.
    \item When $\mu=\epsilon\delta_s$ with $\epsilon\in (0,1)$, then mixed local-nonlocal operators is $ -\alpha \Delta_{p} +\epsilon(-\Delta_{p})^s$.
    \item When $\mu= a_1\delta_{s_1}+a_2\delta_{s_2}$ with $a_1,a_2 \in (0, +\infty)$ and $1>s_1>s_2> 0,$ then mixed local-nonlocal operators and mixed local-nonlocal operators $-\alpha \Delta_p+ a_1 (-\Delta)_p^{s_1}+ a_2 (-\Delta)_p^{s_2}$.
    \item When $\mu= \sum_{k=0}^{n} a_k \delta_{s_k}$ with $n\geq2$ such that  $a_k > 0$ and $1 >s_n>\ldots >s_1>s_0> 0,$ then mixed local-nonlocal operators associated with a convergent series of Dirac measures is $-\alpha\Delta_p+\sum_{k=0}^{n} a_k (-\Delta)_p^{s_k}$.
    \item When $\mu= \sum_{k=0}^{+\infty} a_k \delta_{s_k}$ with $\sum_{k=0}^{+\infty} a_k \in (0, +\infty)$ such that $a_0>0$ and $a_k \geq 0$ and $1 >\ldots s_2>s_1>s_0> 0,$ then mixed local-nonlocal operators associated with a convergent series of Dirac measures is $-\alpha\Delta_p+\sum_{k=0}^{+\infty} a_k (-\Delta)_p^{s_k}$.
    \item When $\d\mu(s)=f(s) \d s$ with $\bar{s}=\inf_{s\in \Sigma}s>0 $, where $f (\neq0)$ is a non-negative, measurable function, then mixed local-nonlocal operators (sum of uncountable operator) is $-\alpha \Delta_p+\int_{(0,1)} f(s)(-\Delta)_p^s \d s$, where $\d s$ represents the Lebesgue measure.
    \item When $\supp \{\mu\} \subset (0,1]$, then mixed local-nonlocal operators is given by $$\int_{(0,1)\cap \,\supp \{\mu\}}(-\Delta)_{p}^{s} u \d \mu(s)-\alpha \Delta_p u.$$
\end{enumerate}


Before stating the main results, we highlight the primary novelties of this work. The normalized superposition tail \eqref{tailinro} is introduced here for the first time and turns out to be particularly well suited for establishing local H\"older continuity. We believe that this notion will also play an important role in future investigations of the regularity theory for mixed-order superposition operators and related nonlocal problems. 
Another significant aspect of this work is that the results apply simultaneously to a large class of superposition operators involving an infinite (possibly uncountable) family of fractional phases. In particular, all the results obtained in this paper are completely new for the examples $(3)$--$(7)$ listed above, which exhibit genuinely mixed local-nonlocal and mixed-order behaviors. The lack of a single differentiability scale and the interaction between different fractional orders make these examples substantially more delicate than the classical fractional or local settings.

We now state the main results obtained in this paper. Our first result is the following Caccioppoli-type inequality for the weak solutions to \eqref{MP}, which is crucial in establishing the local boundedness of solutions. 
\begin{thm}[Caccioppoli-type inequality] \label{caccintro}  Let $u$ is a weak subsolution of \eqref{MP} and $\phi=(u-k)_+:=\max\{u-k, 0\}$ with $k\in \mathbb{R}$. Then there exists a positive constant $C:=C(p)$ such that
    \begin{align}\label{eq3.1intro}
     &\alpha \int_{B_r(x_0)} w^p|\nabla \phi|^p \d x+ \int_{(0,1)} C_{N,s,p} \left( \iint_{B_r(x_0)\times B_r(x_0)} \frac{|\phi(x) w(x)-\phi(y)w(y)|^p}{|x-y|^{N+sp}}\d x \d y\right) \d \mu \nonumber \\
   \quad\quad \leq &  C \Bigg[ \int_{(0,1)} C_{N,s,p} \left( \iint_{B_r(x_0)\times B_r(x_0)} \frac{ \max\{\phi(x), \phi(y)\}^p|w(x)-w(y)|^p}{|x-y|^{N+sp}}\d x\d y\right)\d \mu \nonumber \\
  & + \int_{(0,1)} C_{N,s,p} \left( \ess_{x\in \supp (w)} \int_{\mathbb{R}^N\setminus B_r(x_0)} \frac{\phi^{p-1}(y)}{|x-y|^{N+sp}}\d y \cdot \int_{B_r(x_0)}\phi(x) w^p(x) \d x \right)\d \mu \nonumber\\
  &\quad\quad\quad\quad+\alpha \int_{B_r(x_0)} \phi^p|\nabla w|^p \d x \Bigg], 
    \end{align}
    where $w$ is a nonnegative function such that $w\in C_c^\infty (B_r(x_0))$ and $B_r(x_0)\subset \Omega$.
\end{thm}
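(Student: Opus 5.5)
We test the weak subsolution inequality with $\varphi=\phi w^p=(u-k)_+w^p$. Since $w\in C_c^\infty(B_r(x_0))$ is nonnegative and $\phi\in W^{1,p}_{\loc}$, this is an admissible nonnegative test function supported in $B_r(x_0)\subset\Omega$. Being a subsolution gives
\[
\alpha\int_{B_r}|\nabla u|^{p-2}\nabla u\cdot\nabla(\phi w^p)\,dx+\int_{(0,1)}C_{N,s,p}\iint_{\mathbb{R}^{2N}}\frac{J_p(u(x)-u(y))(\varphi(x)-\varphi(y))}{|x-y|^{N+sp}}\,dx\,dy\,d\mu(s)\le 0,
\]
where $J_p(t)=|t|^{p-2}t$. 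The strategy is the standard one: treat the local part as in the classical $p$-Laplacian Caccioppoli estimate, and treat the nonlocal part for each fixed $s$ as in Di Castro--Kuusi--Palatucci. The constants in the nonlocal estimates depend only on $p$, not on $s$, so we can integrate them against $d\mu(s)$ and sum.

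\textbf{Local term.} On $\{u>k\}$ we have $\nabla\phi=\nabla u$, and $\nabla\phi=0$ elsewhere, so
\[
|\nabla u|^{p-2}\nabla u\cdot\nabla(\phi w^p)=w^p|\nabla\phi|^p+p\,w^{p-1}\phi\,|\nabla\phi|^{p-2}\nabla\phi\cdot\nabla w .
\]
The cross term is bounded by Young's inequality, $p\,w^{p-1}|\nabla\phi|^{p-1}\phi|\nabla w|\le \tfrac12 w^p|\nabla\phi|^p+C(p)\phi^p|\nabla w|^p$. This leaves $\tfrac{\alpha}{2}\int w^p|\nabla\phi|^p$ on the left and $C\alpha\int\phi^p|\nabla w|^p$ on the right.

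\textbf{Nonlocal term, fixed $s$.} Split $\mathbb{R}^{2N}$ into $B_r\times B_r$ and the two symmetric off-diagonal pieces; $\varphi$ vanishes off $B_r$.

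On $B_r\times B_r$ we use the DKP pointwise lemma. By symmetry assume $u(x)\ge u(y)$ and $w(x)\ge w(y)$ in the relevant cases. One shows
\[
J_p(u(x)-u(y))\bigl(\phi(x)w^p(x)-\phi(y)w^p(y)\bigr)\ge c(p)\,|\phi(x)w(x)-\phi(y)w(y)|^p-C(p)\max\{\phi(x),\phi(y)\}^p|w(x)-w(y)|^p .
\]
The argument goes as follows. When both points lie below level $k$ the expression is zero. When only one point is above level $k$, the term is $\ge \phi(x)^{p-1}\phi(x)w^p(x)\ge|\phi w(x)-\phi w(y)|^p$. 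When both points are above level $k$, set $a=\phi(x)\ge b=\phi(y)$. Then use $w^p(x)\le(1+\varepsilon)w^p(y)+C\varepsilon^{1-p}|w(x)-w(y)|^p$ together with the choice $\varepsilon\sim(a-b)/a$. This produces $(a-b)^p\max w^p$ minus $C a^p|w(x)-w(y)|^p$, and then one passes to $|aw(x)-bw(y)|^p$ via $|aw(x)-bw(y)|^p\le 2^{p-1}\bigl((a-b)^p w(x)^p+b^p|w(x)-w(y)|^p\bigr)$.

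On the off-diagonal pieces, for $x\in B_r$ and $y\notin B_r$ the integrand is
\[
J_p(u(x)-u(y))\,\phi(x)w^p(x)\ge -(u(y)-u(x))_+^{p-1}\phi(x)w^p(x)\ge -\phi(y)^{p-1}\phi(x)w^p(x).
\]
This holds because $\phi(x)>0$ forces $u(x)>k$, so $(u(y)-u(x))_+\le(u(y)-k)_+$. Integrating, and taking the essential supremum over $x\in\supp w$ of $\int_{\mathbb{R}^N\setminus B_r}\phi^{p-1}(y)|x-y|^{-N-sp}dy$, gives the tail term with factor $2$.

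\textbf{Conclusion.} Multiply the fixed-$s$ inequality by $C_{N,s,p}$, integrate in $\mu$ over $(0,1)$, add the local estimate, absorb, and divide by $\min\{c(p),1/2\}$; this gives \eqref{eq3.1intro} with $C=C(p)$. The main obstacle is the pointwise algebraic inequality with constants uniform in $s$, and it is uniform because it does not involve the kernel at all. A secondary point is measurability in $s$ and finiteness of the integrals, so that Fubini can be used in the $\mu$-integration; this follows from $u\in W^{s,p}_\mu$ and the finiteness of $\mu$.
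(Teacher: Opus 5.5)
Your proposal is correct and follows essentially the paper's proof: test with $\phi w^p$, use Young's inequality for the local part, and on $B_r\times B_r$ use the Di Castro--Kuusi--Palatucci pointwise estimate (Lemma~\ref{lmn2.9} with $\varepsilon\sim(\phi(x)-\phi(y))/\phi(x)$) together with convexity; off the diagonal use $\mathcal{A}(u(x,y))\phi(x)\ge-\phi^{p-1}(y)\phi(x)$, and then integrate the $s$-independent constants against $\mu$. The one slip is in the diagonal case: symmetry lets you fix $u(x)\ge u(y)$ but not also $w(x)\ge w(y)$, and in the only nontrivial subcase ($\phi(x)\ge\phi(y)$ with $w(x)<w(y)$) you need Lemma~\ref{lmn2.9} as a lower bound for $w^p(x)$ in terms of $w^p(y)$, as in \eqref{eq360}, rather than the upper bound $w^p(x)\le(1+\varepsilon)w^p(y)+\dots$ you wrote; with $x$ and $y$ interchanged there, your argument goes through as stated.
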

\begin{rem}
    Note that the estimate \eqref{eq3.1intro} continues to hold for $\phi=(u-k)_-$ whenever $u$ is a weak supersolution of \eqref{MP}.
\end{rem} 

The following theorem proves the local boundedness of weak subsolutions to \eqref{MP}. The proof substantially relies on the Sobolev inequality \eqref{sob}.

\begin{thm}[Local boundedness]  \label{locinyro}  Let $u$ be a weak subsolution to the problem \eqref{MP} and $p\in(1,\infty)$. There exists a constant $C:=C(N,p,\Sigma,\mu)>0$ such that
    \begin{equation}
        \ess_{B_{\frac{r}{2}(x_0)}} u \leq
            \delta \operatornamewithlimits{Tail}\bigg(u_+;x_0,\frac{r}{2}\bigg)+C\delta^{-\frac{(p-1)\eta}{(\eta-1)p}}\bigg( \fint_{B_r(x_0)}u_+^p \d x \bigg)^\frac{1}{p}, \nonumber
    \end{equation}
    where $B_r=B_r(x_0)\subset \Omega$ with $r\in (0,1]$, $\delta\in (0,1]$, $\eta$ defined in \eqref{grad sob} and $\Sigma:=\operatorname{supp}\{\mu\}$.
    
\end{thm}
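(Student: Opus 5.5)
The plan is a De Giorgi iteration in the spirit of Di Castro--Kuusi--Palatucci and Garain--Kinnunen. Theorem~\ref{caccintro} supplies the energy estimate. The Sobolev inequality \eqref{grad sob} (exponent $\eta>1$, coming from the local gradient term since $\alpha>0$) provides the gain in integrability.

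\textbf{Setup.} For $j\ge0$ set
\[
r_j=\tfrac r2(1+2^{-j}),\qquad \tilde r_j=\tfrac{r_j+r_{j+1}}2,\qquad B_j=B_{r_j}(x_0),\qquad \tilde B_j=B_{\tilde r_j}(x_0).
\]
Choose levels $k_j=k(1-2^{-j})$ with $k>0$ to be fixed. Let $w_j=(u-k_{j+1})_+$, and take cutoffs $\psi_j\in C_c^\infty(\tilde B_j)$ with $\psi_j=1$ on $B_{j+1}$ and $|\nabla\psi_j|\le C2^j/r$. The quantity to iterate is
\[
Y_j=\Big(\fint_{B_j}(u-k_j)_+^p\,dx\Big)^{1/p}.
\]

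\textbf{Bounding the Caccioppoli terms.} Apply \eqref{eq3.1intro} on $B_j$ with $\phi=w_j$ and $w=\psi_j$.
\begin{itemize}
\item The local term on the right is bounded by $C2^{jp}r^{-p}\int_{B_j}w_j^p$.
\item The fractional Gagliardo terms on the right are controlled using $|\psi_j(x)-\psi_j(y)|\le C2^j|x-y|/r$. Integrating in $y$ gives, for each $s$, a factor $C_{N,s,p}\,\frac{C}{(1-s)p}\,2^{jp}r^{-sp}$. The normalization $C_{N,s,p}$ behaves like $(1-s)$ as $s\to1$, and $r\le1$ gives $r^{-sp}\le r^{-p}$. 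Hence the $\mu$-integral is bounded by $C(N,p,\mu)\,2^{jp}r^{-p}\int_{B_j}w_j^p$. This is exactly why the normalized tail is used.
\item For the tail term, for $x\in\supp\psi_j\subset\tilde B_j$ and $y\notin B_j$ we have $|y-x_0|\le C2^{j}|x-y|$. The supremum is therefore at most
\[
C\,2^{j(N+p)}\int_{(0,1)}C_{N,s,p}\int_{\mathbb R^N\setminus B_{r/2}}\frac{u_+^{p-1}(y)}{|y-x_0|^{N+sp}}\,dy\,d\mu,
\]
which is $\le C2^{j(N+p)}r^{-p}\operatorname{Tail}(u_+;x_0,r/2)^{p-1}$ after using $r^{-sp}\le C r^{-p}$ for $r\le1$.
\item On $\{u>k_{j+1}\}$ one has $w_j\le(u-k_j)_+$ and $(u-k_j)_+\ge k2^{-j-1}$. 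This gives
\[
\int w_j\le (2^{j+1}/k)^{p-1}\int (u-k_j)_+^p,\qquad |\{w_j>0\}\cap B_j|\le (2^{j+1}/k)^p\int(u-k_j)_+^p .
\]
\end{itemize}

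\textbf{Iteration.} Combine H\"older with \eqref{grad sob} applied to $w_j\psi_j$, dropping the nonnegative fractional term on the left and keeping $\alpha\int\psi_j^p|\nabla w_j|^p$. This yields
\[
Y_{j+1}\le C\,b^{j}\Big(1+\frac{\operatorname{Tail}^{p-1}}{k^{p-1}}\Big)^{1/p}\Big(\frac{Y_j}{k}\Big)^{\beta}Y_j,\qquad \beta=\frac{\eta-1}{\eta}\quad(\text{up to a factor of } p).
\]
Choose $k=\delta\operatorname{Tail}(u_+;x_0,r/2)+C\delta^{-\frac{(p-1)\eta}{(\eta-1)p}}Y_0$. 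The first summand makes $\operatorname{Tail}/k\le\delta^{-1}$, so the bracket is $\le C\delta^{-(p-1)/p}$. Standard fast geometric convergence then gives $Y_j\to0$ provided $Y_0/k\le C^{-1/\beta}b^{-1/\beta^2}\delta^{(p-1)/(p\beta)}$. This is precisely ensured by the second summand with exponent $\frac{(p-1)\eta}{(\eta-1)p}$. Hence $u\le k$ a.e.\ on $B_{r/2}$.

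\textbf{Main obstacle.} The delicate step is making every constant uniform in $s\in\Sigma$ when integrating against $\mu$. Two points need care. First, $C_{N,s,p}/(1-s)$ must stay bounded near $s=1$. Second, $C_{N,s,p}$ must stay bounded near $s=0$; there the extra $s$-singularity of the Gagliardo integral over a ball is harmless, since we only integrate over $B_j$ where $|x-y|\le 2r$. So the dependence of $C$ on $\Sigma$ and $\mu$ enters only through $\mu((0,1))$ and these uniform normalizing bounds.
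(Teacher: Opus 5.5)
Your proposal is correct and follows essentially the same route as the paper's proof. Both run a De Giorgi iteration on the radii $r_j=\tfrac r2(1+2^{-j})$, using the Caccioppoli estimate with only the local gradient term kept on the left (Corollary~\ref{cor3.2}), the Sobolev inequality \eqref{sob}, fractional and tail bounds made uniform in $s$ via the normalization $C_{N,s,p}$ and $r\le 1$, and the choice $k=\delta\operatorname{Tail}(u_+;x_0,\tfrac r2)+C\delta^{-\frac{(p-1)\eta}{(\eta-1)p}}Y_0$ in Lemma~\ref{MI Lemma}. The only cosmetic differences are that the paper truncates at intermediate levels $\bar k_j=\tfrac{k_j+k_{j+1}}{2}$ and uses a Chebyshev lower bound where you use H\"older with the measure of the superlevel set, which gives the same estimate.
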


In the following theorem, we prove the H\"older continuity of solutions, which extends the De Giorgi-Nash-Moser theory (see \cite{DeG57, Mos61, Nash58}) to the superposition operators of mixed fractional order. It is intrinsic to impose an extra condition $\bar{s}=\inf_{s\in \Sigma}s\in (0,1)$ to obtain the H\"older regularity, taking into account that $\mu(\{0\})=0$.

\begin{thm}[Local H\"older continuity]  Let $u$ be a weak solution of \eqref{MP}. Then $u$ is locally H\"older continuous in $\Omega$. Moreover, there exists $\sigma\in(0, \frac{\bar{s}p}{p-1})$ and positive constant $C:=C(N,p,\Sigma,\mu)$ such that
    \begin{equation*}
        \operatorname{osc}_{B_\epsilon(x_0)}u\leq C\left(\frac{\epsilon}{r} \right)^\sigma \left(  \operatorname{Tail}(u;x_0,{r})+C\left(\fint_{B_{2r}(x_0)}|u|^p \d x \right)^\frac{1}{p} \right), 
    \end{equation*}
    where $B_{2r}(x_0)\subset \Omega$ such that $r\in (0,1]$ and $\epsilon\in (0,r]$.
    
\end{thm}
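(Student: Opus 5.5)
We argue by the standard De Giorgi–Di Castro–Kuusi–Palatucci oscillation-reduction scheme, adapted to the superposition tail \eqref{tailinro}. Fix $x_0$, $r\le 1$ with $B_{2r}(x_0)\subset\Omega$, and set $r_j:=\lambda^j r/2$ for a small $\lambda\in(0,1/4]$ to be chosen, $B_j:=B_{r_j}(x_0)$. We construct nonincreasing $M_j$ and nondecreasing $m_j$ with
\[
m_j\le \essi_{B_j}u\le \ess_{B_j}u\le M_j,\qquad M_j-m_j=\omega_j:=\left(\frac{r_j}{r_0}\right)^{\sigma}\omega_0,
\]
where $\omega_0:=2\,\mathrm{Tail}(u;x_0,r/2)+C(\fint_{B_{2r}}|u|^p)^{1/p}$. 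The case $j=0$ follows from Theorem \ref{locinyro} applied to $u$ and $-u$ (with $\delta=1$), after relating $\mathrm{Tail}(u;x_0,r/2)$ to $\mathrm{Tail}(u;x_0,r)$ and the $L^p$ average on $B_{2r}$ using $r\le1$.

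For the induction step, assume the construction holds up to $j$. One of the sets $\{u-m_j\ge\omega_j/2\}$ or $\{M_j-u\ge\omega_j/2\}$ occupies at least half of $2B_{j+1}$; say the first, so that $v:=u-m_j\ge 0$ in $B_j$. The first key ingredient is a logarithmic lemma for supersolutions of \eqref{MP}. It is obtained by testing with $(v+d)^{1-p}\psi^p$, in the spirit of \cite{AGKR2025i}. The local term yields $\alpha\int|\nabla\log(v+d)|^p\psi^p$. For each $s$ the nonlocal term is handled as in \cite{DKP2016}, and one then integrates in $\mu$. The result is
\[
\alpha\int_{B_\rho}|\nabla\log(v+d)|^p\le C\rho^{N-p}+C\,d^{1-p}\rho^{N}\int_{(0,1)}C_{N,s,p}\rho^{-sp}\Big(\tfrac{\rho}{R}\Big)^{sp}\mathrm{Tail}_s(v_-;x_0,R)^{p-1}\d\mu ,
\]
where $\rho^{-sp}\le\rho^{-p}$ because $\rho\le1$. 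Since the local term alone controls the $L^p$ norm of the gradient, we can apply the Poincaré inequality on $B_\rho$ and then the usual truncation argument. With $d$ chosen proportional to $\varepsilon\omega_j$ plus the tail of $v_-$, this gives: for $\delta\in(0,1/8)$, the density of $\{v\le 2\delta\omega_j\}$ in $2B_{j+1}$ is at most $C/\log(1/\delta)$, provided the tail of $v_-$ outside $B_j$ is at most $\delta^{p-1}\omega_j^{p-1}$ times a constant. Next, a De Giorgi iteration applied to $(k-v)_+$ with levels $k_i\downarrow\delta\omega_j$ on shrinking balls gives $v\ge\delta\omega_j$ on $B_{j+1}$ once $\delta$ is small. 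This iteration uses Theorem \ref{caccintro} for $(u-k)_-$ together with the gradient Sobolev inequality \eqref{grad sob} (exponent $\eta$), which comes from the local part. We then set $m_{j+1}=m_j+\delta\omega_j$, $M_{j+1}=M_j$, and we need $(1-\delta)\le\lambda^\sigma$.

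The main obstacle is the tail estimate, that is, verifying that $\mathrm{Tail}(v_-;x_0,r_j)^{p-1}\lesssim\delta^{p-1}\omega_j^{p-1}$ uniformly in $j$ in the presence of the superposition over $s$. We would split $\mathbb{R}^N\setminus B_j$ into annuli $B_{i}\setminus B_{i+1}$ for $i<j$ and the exterior of $B_0$. On $B_i$ we use $v_-\le\omega_i-\omega_j$, and outside $B_0$ we use $|u|+|m_j|$. For each $s$ this yields
\[
r_j^{sp}\int_{B_i\setminus B_{i+1}}\frac{dy}{|y-x_0|^{N+sp}}\lesssim \frac{1}{s}\Big(\frac{r_j}{r_{i+1}}\Big)^{sp}\le \frac{1}{\bar s}\,\lambda^{(j-i-1)\bar sp},
\]
using $\inf\Sigma=\bar s>0$. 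This is precisely where the hypothesis $\bar s>0$ and the normalization $C_{N,s,p}\sim s(1-s)$ enter, since they make the constants uniform in $s$; integrating in $\mu$ only costs $\mu((0,1))$. The resulting sum $\sum_i\lambda^{(j-i)\bar sp}(\lambda^{-(j-i)\sigma}-1)^{p-1}$ is made smaller than $\delta^{p-1}$ by taking $\sigma<\bar sp/(p-1)$ and then $\lambda$ small. This explains the restriction $\sigma\in(0,\frac{\bar sp}{p-1})$ in the statement. Finally, given $\epsilon\in(0,r]$, we choose $j$ with $r_{j+1}<\epsilon\le r_j$, which gives $\operatorname{osc}_{B_\epsilon}u\le\omega_j\le C(\epsilon/r)^\sigma\omega_0$. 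This is the claimed estimate, and continuity follows.
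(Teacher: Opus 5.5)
Your proposal is correct and follows essentially the same route as the paper's proof (Lemma~\ref{lmn5.1}, modeled on Di Castro--Kuusi--Palatucci): the base case comes from local boundedness, then the half-measure alternative, then the logarithmic estimate turned into a measure estimate via the Poincar\'e inequality of the local part, then a De Giorgi iteration based on the Caccioppoli and gradient Sobolev inequalities, and finally a tail estimate over the annuli $B_i\setminus B_{i+1}$, where $\bar s>0$ gives uniform geometric decay. The differences are only bookkeeping: you use the sharper bound $v_-\le\omega_i-\omega_j$ on $B_i$ and keep the reduction factor $\delta$ independent of $\lambda$, whereas the paper uses $\ess_{B_i}|u_j|\le 2\phi(r_i)$ and couples $\tau=\kappa^{\bar sp/(p-1)-\sigma}$ so that $\tau^{1-p}\kappa^{\bar sp-\sigma(p-1)}=1$. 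Two points need care, though: the bound $\lesssim\delta^{p-1}\omega_j^{p-1}$ holds only after including the factor $(2\lambda)^{\bar sp}$ from $(\rho/R)^{sp}$ (at radius $r_j$ itself it fails for $j=0$, where the tail of $v_-$ can be comparable to $\omega_0$), and $\sigma$ must be decreased again after $\lambda$ is fixed to get $\lambda^\sigma\ge 1-\delta$, which is harmless since your tail sum decreases with $\sigma$.
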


The proof of H\"older continuity relies on the local boundedness estimates (Theorem \ref{locinyro}), the Caccioppoli-type inequality (Theorem \ref{caccintro}), and the logarithmic energy estimate established in \cite{AGKR2025i}. Our argument follows the approach developed in \cite{DKP2016}.

Our next result is the weak Harnack inequality for weak supersolutions.

\begin{thm}[Weak Harnack inequality] Let $u$ be a weak supersolution of \eqref{MP} with $u\geq 0$ in $B_R(x_0) \subset \Omega$. Then there exists a constant $C:=C(N,p,s,\Sigma)>0$ such that
    \begin{align*}
        \bigg(\fint_{B_{\frac{r}{2}}(x_0)}u^t \d x \bigg)^\frac{1}{t} \leq C\, \essi_{B_r(x_0)}u +C \,{\sup_{s\in \Sigma}\bigg(\frac{r}{R}\bigg)^{\frac{sp}{p-1}}} \operatorname{Tail}\big(u_-(y);x_0,R\big),
    \end{align*}
    where $r\in(0,1]$, $B_r(x_0)\subset B_{\frac{R}{2}}(x_0)$ and $0<t<\eta(p-1)$.
    \end{thm}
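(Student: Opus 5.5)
The plan follows the scheme of Di Castro--Kuusi--Palatucci and of Garain--Kinnunen for the mixed local--nonlocal case, with every nonlocal quantity integrated against $\mu$. Without loss of generality we replace $u$ by $u+\varepsilon$, with $\varepsilon>0$ a free parameter chosen later (for instance comparable to the tail term), so that $u\ge\varepsilon>0$ in $B_R(x_0)$. The proof has two parts. First, a Moser iteration for negative powers combined with the expansion of positivity shows that $u^{-1}$ is locally bounded in terms of its small integral means; this bounds $\essi u$ from below. Second, a crossover lemma uses the logarithmic energy estimate of \cite{AGKR2025i} and a John--Nirenberg argument to connect positive and negative small powers. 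Finally, a Moser iteration for small positive powers $0<q<1$ raises the exponent up to any $t<\eta(p-1)$.

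\textbf{Step 1: Caccioppoli estimate for powers.} We test the weak supersolution inequality with $\varphi = u^{-q}w^p$ for $q>0$ (and later $q\in(0,p-1)$, $q\neq p-1$), where $w\in C_c^\infty(B_r)$ is a cutoff. For the local part $\alpha\int|\nabla u|^{p-2}\nabla u\cdot\nabla\varphi$, the standard algebra gives
\[
\alpha\int w^p|\nabla u^{(p-1-q)/p}|^p \lesssim \alpha\int u^{p-1-q}|\nabla w|^p .
\]
For each $s\in(0,1)$ the nonlocal part splits in the usual way into an interior piece over $B_r\times B_r$ and an exterior piece. The interior piece is handled by the pointwise inequality from \cite{DKP2014}, which yields a good term $|u^{\gamma}w(x)-u^{\gamma}w(y)|^p$ with $\gamma=(p-1-q)/p$, plus an error controlled by $\max\{u(x),u(y)\}^{p-1-q}|w(x)-w(y)|^p$. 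The exterior piece is bounded by
\[
\sup_{x\in\supp w}\int_{\mathbb R^N\setminus B_r}\frac{u_-^{p-1}(y)}{|x-y|^{N+sp}}\d y\int u^{-q}w^p \d x .
\]
All constants are independent of $s$ (they depend only on $p,q$), so we can integrate the inequality against $C_{N,s,p}\d\mu(s)$. With $w$ chosen so that $|\nabla w|\lesssim (r-\rho)^{-1}$, and using $r\le1$ and $C_{N,s,p}\le C(N,p)$ together with $\int_{B_r}\int_{B_r}|x-y|^{p-N-sp}\lesssim r^{p-sp}/(1-s)$ to absorb into the gradient term, the nonlocal interior errors are dominated by the local term $(r-\rho)^{-p}\int_{B_r} u^{p-1-q}$. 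This is where $\alpha>0$ is used. The tail term is bounded by $(R-r)^{-?}$ times $\varepsilon^{1-p}\operatorname{Tail}(u_-;x_0,R)^{p-1}\int u^{p-1-q}w^p$, using $u\ge\varepsilon$. Because of the factor $\sup_{s\in\Sigma}(r/R)^{sp}$ in the tail scaling, we choose
\[
\varepsilon=\sup_{s\in\Sigma}\Big(\frac rR\Big)^{\frac{sp}{p-1}}\operatorname{Tail}(u_-;x_0,R),
\]
which makes this contribution of the same order as the local one.

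\textbf{Step 2: Moser iterations.} Combining Step 1 with the gradient Sobolev inequality \eqref{grad sob} (gain exponent $\eta$), we obtain reverse H\"older inequalities for $v=u^{(p-1-q)/p}$ on shrinking balls. For negative powers we iterate to $\infty$ and obtain
\[
\essi_{B_{r/2}}u \gtrsim \Big(\fint_{B_r}u^{-t_0}\Big)^{-1/t_0}
\]
for some small $t_0$. For positive exponents $0<p-1-q<p-1$ we iterate finitely many times, which covers the range of $t$ below $\eta(p-1)$.

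\textbf{Step 3: Crossover.} The logarithmic estimate of \cite{AGKR2025i} gives that $\log u$ has bounded mean oscillation on $B_{r/2}$, up to the tail term that is already absorbed into $\varepsilon$. John--Nirenberg then yields some $t_0>0$ with
\[
\fint u^{t_0}\,\fint u^{-t_0}\le C .
\]
Chaining Steps 2 and 3 and then letting $\varepsilon$ return to the tail expression gives the stated inequality.

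The main obstacle is Step 1. One must check that the interior nonlocal errors, integrated over the whole family $s\in(0,1)$, are uniformly controlled. This requires the $(1-s)$ behaviour of $C_{N,s,p}$ to cancel the $1/(1-s)$ blow-up near $s=1$, and the tail scaling has to be tracked so as to produce exactly the factor $\sup_{s\in\Sigma}(r/R)^{sp/(p-1)}$. I would handle this with the normalization $C_{N,s,p}\sim s(1-s)$ and by bounding $|x-y|^{-N-sp}$ by $(R/2)^{-sp}$ times the Tail kernel for $x\in B_r$ and $y\notin B_R$.
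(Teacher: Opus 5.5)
Your argument is correct in outline. At the decisive step, linking a small integral mean of $u$ to $\essi u$, you use the classical Moser scheme, whereas the paper follows \cite{DKP2014}.

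\textbf{What the paper does instead.} The paper never iterates negative powers and never uses John--Nirenberg. From the logarithmic energy bound it proves an expansion of positivity in measure (Lemma~\ref{lmn6.1}). A De Giorgi iteration upgrades this to $\essi_{B_{4r}}u\ge\delta\zeta-\sup_{s\in\Sigma}(r/R)^{sp/(p-1)}\operatorname{Tail}(u_-;x_0,R)$ (Lemma~\ref{lmn6.2}). The Krylov--Safonov covering (Lemma~\ref{lmn6.30}) then gives $(\fint_{B_r}u^\kappa)^{1/\kappa}\le C\essi_{B_r}u+C\sup_{s\in\Sigma}(r/R)^{sp/(p-1)}\operatorname{Tail}(u_-;x_0,R)$ for a small $\kappa$ (Lemma~\ref{lmn6.4}). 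Only the last stage coincides with yours: the Caccioppoli estimate for $(u+d)^{(p-q)/p}$, $1<q<p$ (Lemma~\ref{lmn7.1}, your range $0<q<p-1$), with $d$ the scaled tail, followed by finitely many positive-power Moser steps from $\kappa$ up to $t<\eta(p-1)$.

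\textbf{Comparison.} Your negative-power iteration plus crossover works here because $\alpha>0$. The local energy gives the $W^{1,p}$ Sobolev gain and, through Poincar\'e, a uniform BMO bound for $\log(u+\varepsilon)$ on sub-balls, while every nonlocal contribution is lower order for radii at most $1$. The paper's route is the one built for purely nonlocal operators, needs no control of constants in an iteration to infinity, and yields expansion of positivity as a result of independent interest. Yours is shorter and more self-contained in the mixed setting.

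\textbf{Points to tighten.}

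The pointwise inequality of \cite{DKP2014} only covers your range $0<q<p-1$. For negative powers you need constants at most polynomial in $q$. This is easy here, because you can discard the nonlocal coercive term: for $a,b>0$, $w_1,w_2\ge 0$ and $q>0$ one has
\[
|a-b|^{p-2}(a-b)\bigl(a^{-q}w_1^p-b^{-q}w_2^p\bigr)\le p\max\{1,p/q\}^{p-1}\max\{a,b\}^{p-1-q}|w_1-w_2|^p ,
\]
and the constant is at most $pe$ for $q>p-1$.

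The exterior piece also contains the contribution from $(u(x)-u(y))_+\le u(x)$ for $y\in B_R\setminus B_r$. It is bounded by $C(r-\rho)^{-p}\int u^{p-1-q}w^p$, so it is harmless.

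The interior nonlocal errors are bounded by $(r-\rho)^{-p}\int_{B_r}u^{p-1-q}$ directly, using $r\le 1$; you do not need $\alpha$ there. $\alpha>0$ is used on the left-hand side, where you divide by it, so your constants depend on $\alpha$, hence on $\mu$, as the paper's do.

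As written, your chain ends with $\essi_{B_{r/2}}u$, but the statement has $\essi_{B_r}u$ with the average over $B_{r/2}$. To fix this:
\begin{itemize}
\item Run the negative iteration from $B_{5r/4}$ down to $B_r$, and the John--Nirenberg step on $B_{5r/4}$.
\item Take the logarithmic estimate on each sub-ball $B_\rho(z)\subset B_{5r/4}$ with a cutoff in $B_{6\rho/5}(z)\subset B_{3r/2}$.
\item Since $R\ge 2r$, one has $|x-y|\ge |y-x_0|/4$ for such $x$ and all $y\notin B_R$, so every tail stays comparable to $\operatorname{Tail}(u_-;x_0,R)$.
\end{itemize}
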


\begin{rem}
Although we establish a weak Harnack inequality in this work, we do not derive a full Harnack inequality for the problem \eqref{MP}. The main obstruction arises from the mixed local--nonlocal structure of the superposition operator, particularly from the tail contributions associated with the nonlocal component. The standard nonlocal tail given in \eqref{tailinro} is not sufficiently adapted to this superposition framework. To obtain a full Harnack inequality, it is necessary to introduce a slightly modified notion of tail capable of capturing the interaction between the local and nonlocal parts of the operator in a more refined manner. This requires a substantially different approach, which will be developed in a forthcoming work \cite{BGKL-harnack26}.
\end{rem}

Our final result concerns the lower and upper semicontinuity of weak supersolutions and subsolutions, respectively (see also Corollary~\ref{uppsem}). The proof is based on the approach developed in \cite{L2021}, adapted to the framework of superposition operators.
\begin{thm}[Lower semicontinuity]  Let $u$ be a weak supersolution of \eqref{MP}. Then
    \begin{align*}
        u(x)=u_*(x)  \text{ for every } x\in \mathcal{F}.
    \end{align*}
     In particular, $u_*$ is a lower semicontinuous representation of $u$ in $\Omega$.
\end{thm}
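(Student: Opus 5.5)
The plan is to follow Liao's approach \cite{L2021}, working with the lower semicontinuous regularization
\[
u_*(x):=\lim_{\rho\to0}\operatorname*{ess\,inf}_{B_\rho(x)}u .
\]
We take $\mathcal F$ to be the set of Lebesgue points of $u$; more precisely, $\mathcal F$ is the set of $x\in\Omega$ with $|u(x)|<\infty$ and
\[
\lim_{\rho\to0}\fint_{B_\rho(x)}|u-u(x)|\d y=0 .
\]
Since $u\in L^1_{\loc}(\Omega)$, the complement of $\mathcal F$ has measure zero. The inequality $u_*(x)\le u(x)$ on $\mathcal F$ is immediate, because the essential infimum over $B_\rho(x)$ is bounded by the average $\fint_{B_\rho(x)}u$, and this average tends to $u(x)$. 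So everything reduces to proving $u(x)\le u_*(x)$ for $x\in\mathcal F$. Once that holds, $u_*$ is lower semicontinuous by construction and agrees with $u$ almost everywhere, so it is a representative.

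For the reverse inequality, we fix $x_0\in\mathcal F$ and $\varepsilon>0$, and apply a De Giorgi-type \emph{measure-to-pointwise} lemma to the supersolution on small balls $B_\rho(x_0)$. The first ingredient is the Caccioppoli inequality of Theorem~\ref{caccintro}, applied to $(u-k)_-$ as allowed by the remark after it. We use it with levels
\[
k_j=u(x_0)-\varepsilon+\frac{\varepsilon}{2^{j}}
\]
on shrinking balls $B_{\rho_j}$, where $\rho_j=\frac{\rho}{2}+\frac{\rho}{2^{j+1}}$, together with cutoffs $w_j$. Combining this with the Sobolev inequality \eqref{sob} (the local term with $\alpha>0$ gives the gain exponent $\eta$) yields the standard recursion
\[
Y_{j+1}\le C\,b^{j}\,Y_j^{1+\beta},\qquad Y_j=\fint_{B_{\rho_j}}\frac{(u-k_j)_-^{p}}{\varepsilon^{p}}\d x .
\]
The recursion carries an additional tail term. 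Standard fast geometric convergence then gives $u\ge u(x_0)-\varepsilon$ a.e.\ in $B_{\rho/2}(x_0)$, provided $Y_0\le\nu$ for some small $\nu$ depending only on the data.

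The smallness of $Y_0$ comes from the Lebesgue point property. To make it available, we first truncate $u$ to $\max\{u,u(x_0)-M\}$, or alternatively use the local boundedness of Theorem~\ref{locinyro} applied to $-u$ to control $(u-k)_-$ in $L^\infty$ on small balls. With that in place, $\fint_{B_\rho}(u-k_0)_-^p\to0$ as $\rho\to0$, since $(u-k_0)_-\le|u-u(x_0)|$.

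The main obstacle is the tail term, together with the mixed orders $s\in\Sigma$. The tail contribution is
\[
\int_{(0,1)}C_{N,s,p}\,\rho^{sp}\!\int_{\mathbb R^N\setminus B_\rho}\frac{(u-k)_-^{p-1}}{|y-x_0|^{N+sp}}\d y\d\mu .
\]
After dividing by $\varepsilon^{p-1}$, it must be small uniformly in $s$. To arrange this, we split the complement of $B_\rho$ into the annulus $B_R\setminus B_\rho$, for a fixed $R$, and the exterior $\mathbb R^N\setminus B_R$:
\begin{itemize}
\item On the exterior, the contribution is bounded by $\sup_{s\in\Sigma}(\rho/R)^{sp}\operatorname{Tail}(u_-;x_0,R)^{p-1}$. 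This is small as $\rho\to0$ because $\bar s=\inf\Sigma>0$.
\item On the annulus, we decompose dyadically and use the Lebesgue point averages on each dyadic ball. The factors $\rho^{sp}(2^i\rho)^{-sp}$ form a convergent geometric series, with ratio at most $2^{-\bar sp}$ uniformly in $s$.
\end{itemize}
The finiteness of $\mu$ then lets us integrate these bounds in $s$.

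Finally, we choose $\rho$ so small that both $Y_0$ and the tail term fall below the threshold $\nu$. This gives
\[
\operatorname*{ess\,inf}_{B_{\rho/2}(x_0)}u\ge u(x_0)-\varepsilon ,
\]
hence $u_*(x_0)\ge u(x_0)-\varepsilon$. Letting $\varepsilon\to0$ concludes the proof.
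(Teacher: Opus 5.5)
Your argument is correct, but it is organized differently from the paper's.

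\textbf{The paper's route.} The paper never works at a fixed Lebesgue point. It verifies Liao's abstract property $(\mathcal{D})$ through the De Giorgi-type Lemma~\ref{lmn8.3}: if $|\{u\le\beta_-+K\}\cap B_r|\le\rho|B_r|$, then $u\ge\beta_-+aK$ a.e.\ in $B_{3r/4}$, with $\rho$ independent of $r$. It then quotes Lemma~\ref{lmn8.2} (Liao's Theorem~2.1), which packages the Lebesgue-point argument that you carry out by hand.

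\textbf{The real difference is the tail.} Lemma~\ref{lmn8.3} assumes a global lower bound $\lambda_-\le\essi_{\mathbb{R}^N}u$. Then $(\gamma_j-u)_+\le L$ on all of $\mathbb{R}^N$, and the tail term is trivially bounded by $C2^{jc}L^pr^{-p}|X_j|$. That global hypothesis is not part of the theorem as stated.

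\textbf{Your route.} You run an $L^p$-smallness iteration at the level $u(x_0)$ and obtain local boundedness from below from Theorem~\ref{locinyro} applied to $-u$. You then show directly that the tail of $(u(x_0)-u)_+$ vanishes as $\rho\to0$, using dyadic Lebesgue-point averages near $x_0$ and the $(\rho/R)^{sp}$ decay far away. This needs no global lower bound, so it proves the statement as written. The paper's route is more modular: the Lebesgue-point bookkeeping is outsourced, and only a radius-uniform measure-to-pointwise estimate has to be proved for $A_{\mu,p}$. It pays for this with the extra assumption on $u$ in all of $\mathbb{R}^N$.

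\textbf{Minor points.}
\begin{itemize}
\item The truncation $\max\{u,u(x_0)-M\}$ of a supersolution is in general not a supersolution, so rely on the local-boundedness option.
\item You do not need $\bar s>0$, which the paper imposes only for H\"older continuity. Since $C_{N,s,p}\lesssim s$, the quantities $C_{N,s,p}/s$ and $C_{N,s,p}/(1-2^{-sp})$ stay bounded. Moreover $(\rho/R)^{sp}\to0$ for $\mu$-a.e.\ $s$ because $\mu(\{0\})=0$. Dominated convergence (over $s$ and over the dyadic index, with $R$ fixed) therefore handles both the annulus and the exterior.
\item In the exterior, keep the constant contribution coming from $(u(x_0)-u)_+\le|u(x_0)|+u_-$.
\end{itemize}
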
 
The paper is organized as follows. In Section \ref{pre}, we recall several preliminary results, establish auxiliary lemmas, develop the tools needed for the appropriate solution space associated with problem \eqref{MP}, and introduce the notion of weak solutions to \eqref{MP}. In Section \ref{sec3}, we derive an energy estimate leading to a Caccioppoli-type inequality with tail and present some observations related to the logarithmic estimate for the problem under consideration. Section \ref{sec4} is devoted to proving the local boundedness of weak subsolutions to \eqref{MP}. In Section \ref{sec5}, we establish the H\"older continuity of weak solutions to \eqref{MP}. Section \ref{sec6} is concerned with the proof of a weak Harnack inequality for weak supersolutions of \eqref{MP}. Finally, in Section \ref{sec7}, we prove the lower and upper semicontinuity of weak supersolutions to \eqref{MP}.




\section{Function Space Setup and Preliminary Results} \label{pre}

The main objective of this section is to establish the functional analytic framework that focuses on the appropriate notions of fractional Sobolev spaces and their various properties, which are essential for studying our problem. For further details on this material, we refer to \cite{DPSV, DPSV2, DPSV1}. Additionally, we emphasize that some of the results discussed in this section are, to the best of our knowledge, new in the literature, and we provide their proofs.

We know the Gagliardo seminorm of $u$, is defined as
\begin{equation*}
    [u]_{s, p}:= \left(C_{N, s, p} \iint_{\mathbb{R}^{2 N}} \frac{|u(x)-u(y)|^{p}}{|x-y|^{N+s p}} \d x \d y\right)^{1 / p}  \text { for } s \in(0,1), 
\end{equation*}

where $$C_{N,p,s}:=\frac{\frac{sp}{2}(1-s)2^{2s-1}}{\pi^{\frac{N-1}{2}}}\frac{\Gamma(\frac{N+ps}{2})}{\Gamma(\frac{p+1}{2})\Gamma(2-s)} $$ is the normalizing constant. 
Thanks to the normalizing constant $C_{N, s, p},$ we have that
$$
\lim _{s \searrow 0}[u]_{s, p}=\|u\|_{L^{p}\left(\mathbb{R}^{N}\right)} \quad \text { and } \quad \lim _{s \nearrow 1}[u]_{s, p}=\|\nabla u\|_{L^{p}\left(\mathbb{R}^{N}\right)}.
$$


{Since, $\mu$ is a nonnegative, nontrivial finite Borel measure over $(0,1)$, $0<s<1$ and $1< p< \infty.$ We define the following function space when $\alpha= 0$:
 $$W^{s,p}_\mu(\mathbb{R}^N):=\{u:\mathbb{R}^N\rightarrow \mathbb{R}\text{ measurable }: \|u\|_\mu<+\infty \},$$
    equipped with the norm
    \begin{align}\label{eq2.2}
        \|u\|_\mu:=\bigg(\|u\|^p_{L^p(\mathbb{R}^N)}+\int_{(0,1)}[u]_{s,p}^p\d \mu(s)\bigg)^\frac{1}{p}.
    \end{align}
}
\begin{lem}
    The space $W^{s,p}_\mu(\mathbb{R}^N)$ is a Banach space with respect to the norm \eqref{eq2.2}.
\end{lem}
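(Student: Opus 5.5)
We will prove the lemma in the standard way: first check that \eqref{eq2.2} is a norm, then show completeness by taking Cauchy sequences and applying Fatou's lemma both in the inner double integral and in the outer integral against $\mu$.

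\textbf{Norm axioms.} Positivity and homogeneity are immediate, and $\|u\|_\mu=0$ forces $\|u\|_{L^p}=0$, so $u=0$ a.e. For the triangle inequality, we view $\|u\|_\mu$ as the $L^p$ norm of one function on a single measure space. Take the disjoint union $X:=\mathbb{R}^N\sqcup\big((0,1)\times\mathbb{R}^{2N}\big)$. On the first piece put Lebesgue measure. On the second piece put the measure
\[
C_{N,s,p}|x-y|^{-N-sp}\,\d x\,\d y\,\d\mu(s).
\]
Define $Tu$ on $X$ by $Tu:=u$ on the first piece and $Tu(s,x,y):=u(x)-u(y)$ on the second. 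By Tonelli's theorem, $\|u\|_\mu=\|Tu\|_{L^p(X)}$. Since $T$ is linear, Minkowski's inequality in $L^p(X)$ gives the triangle inequality. For this we need the weight to be jointly measurable in $(s,x,y)$. It is: $C_{N,s,p}$ is continuous in $s$, and $|x-y|^{-N-sp}$ is continuous off the diagonal, which has measure zero.

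\textbf{Completeness.} Let $(u_n)$ be Cauchy in $\|\cdot\|_\mu$.
\begin{enumerate}
\item Since $\|\cdot\|_{L^p}\le\|\cdot\|_\mu$, the sequence is Cauchy in $L^p(\mathbb{R}^N)$. So $u_n\to u$ in $L^p$, and along a subsequence $u_{n_k}\to u$ a.e.
\item Then $u_{n_k}(x)-u_{n_k}(y)\to u(x)-u(y)$ for a.e. $(x,y)$, simultaneously for every $s$, because this null set does not depend on $s$.
\item Fix $\varepsilon>0$ and $N_\varepsilon$ with $\|u_n-u_m\|_\mu<\varepsilon$ for $n,m\ge N_\varepsilon$. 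Apply Fatou's lemma on $X$, as $k\to\infty$, to $|T(u_n-u_{n_k})|^p$. This gives $\|u_n-u\|_\mu^p\le\liminf_k\|u_n-u_{n_k}\|_\mu^p\le\varepsilon^p$ for $n\ge N_\varepsilon$.
\item Hence $u_n-u\in W^{s,p}_\mu(\mathbb{R}^N)$, and therefore $u=u_n-(u_n-u)$ lies in the space by the triangle inequality. We also get $u_n\to u$ in norm.
\end{enumerate}

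\textbf{Main obstacle.} The only delicate point is measurability in the parameter $s$. We need $s\mapsto[u]_{s,p}^p$ to be Borel so that the outer integral makes sense, and we need to justify applying Fatou's lemma to the iterated integral. The joint measurability of the integrand on $(0,1)\times\mathbb{R}^{2N}$ settles both: Tonelli's theorem then gives measurability of the inner integral in $s$ and reduces everything to Fatou's lemma on the product space $X$. The a.e. convergence set from step 1 must be chosen independently of $s$; as noted in step 2, it is.
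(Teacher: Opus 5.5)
Your proof is correct and follows essentially the same route as the paper. Both arguments pass from Cauchy in $L^p(\mathbb{R}^N)$ to an a.e.-convergent subsequence, and then use Fatou's lemma to get $\|u_n-u\|_\mu\le\varepsilon$ for large $n$. You also supply several details the paper's proof passes over silently: the triangle inequality via the embedding $T$ into $L^p(X)$, joint measurability in $(s,x,y)$, taking the $\liminf$ along the subsequence $u_{n_k}$ rather than the full sequence, and the fact that the limit $u$ itself belongs to the space.
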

\begin{proof}
    Let $(u_k)$ be Cauchy sequence in $W^{s,p}_\mu(\mathbb{R}^N)$ with respect to the norm \eqref{eq2.2}. Then $(u_k)$ is a Cauchy sequence in $L^p(\mathbb{R}^N)$. Therefore, there exists $u\in L^p(\mathbb{R}^N)$ such that (up to subsequence)
    \begin{align*}
        u_k &\rightarrow u \text{ in } L^p(\mathbb{R}^N), \text{ and } 
        u_k(x) \rightarrow u(x)~ \text{a.e} \text{ in } \mathbb{R}^N.
    \end{align*}
    Since, $(u_k)$ is a Cauchy sequence in $W^{s,p}_\mu(\mathbb{R}^N)$, then for any $\epsilon>0$, there exists $M:=M(\epsilon)\in \mathbb{N}$ such that for all $k\geq M$,
    \begin{align}\label{eq2.260'}
        \epsilon^p \geq& \liminf_{h\rightarrow \infty} \|u_k-u_h\|_\mu \nonumber \\
        \geq &\liminf_{h\rightarrow \infty} \int_{\mathbb{R}^N} |u_k-u_h|^p \d x\nonumber \\
        &+ \liminf_{h\rightarrow \infty}\int_{(0,1)} C_{N,p,s} \bigg(\iint_{\mathbb{R}^{2N}} \frac{|(u_k-u_h)(x)-(u_k-u_h)(y)|^p}{|x-y|^{{N+sp}}} \d x \d y \bigg) \d \mu.
    \end{align}
    Again, $u_k(x)\rightarrow u(x)$ a.e in $\mathbb{R}^N$. Thus, using Fatou's lemma in \eqref{eq2.260'}, we obtain 
    \begin{align*}\label{eq2.270'}
        \epsilon^p \geq& \int_{\mathbb{R}^N} |u_k-u|^p \d x+ \int_{(0,1)} C_{N,p,s} \bigg(\iint_{\mathbb{R}^{2N}} \frac{|(u_k-u)(x)-(u_k-u)(y)|^p}{|x-y|^{{N+sp}}} \d x \d y \bigg) \d \mu \nonumber \\
        =&\|u_k-u\|_\mu~ \text{  for all } k\geq M.
    \end{align*}
    Hence, $u_k\rightarrow u$ in $W^{s,p}_{\mu}(\mathbb{R}^N)$. This completes the proof.
\end{proof}
\begin{definition}\label{Tail}
    Let {$u\in L_{loc}^{p-1}(\mathbb{R}^N)$} and $B_r(x_0)\subset \mathbb{R}^N$ be an open ball. The nonlocal tail of $u$ concerning the ball $B_r(x_0)$ is defined as
    \begin{equation*}
        \operatorname{Tail}(u;x_0,r)=\left[\int_{(0,1)}{C_{N,s,p}}r^{sp}\left(\int_{\mathbb{R}^N\setminus B_r(x_0)}\frac{|u(y)|^{p-1}}{|y-x_0|^{N+sp}}dy\right)d \mu(s)\right]^{\frac{1}{p-1}}.
    \end{equation*}
\end{definition}
\begin{rem}
    It is noteworthy to mention that we can define ``tail" without the normalizing constant $C_{N,p,s}$. The role of the normalizing constant $C_{N,p,s}$ is precisely used to obtain the H\"older continuity of solutions (refer to the proof in Section \ref{sec5}). 
\end{rem}
We now define the `tail space' as follows
\begin{align*}
    L^{p-1}_{sp,\mu}(\mathbb{R}^N):=\{u\in L_{loc}^{p-1}(\mathbb{R}^N):\operatorname{Tail}(u;x,r)<\infty, \forall~ x\in \mathbb{R}^N, \forall ~r\in(0,\infty)\}.
\end{align*}
Observe that {$W^{s,p}_{\mu}(\mathbb{R}^N) \subset L^{p-1}_{sp,\mu}(\mathbb{R}^N).$} 

The next lemma is derived by adapting ideas given in \cite[Proposition $2.1$]{NPV2012}.
\begin{lem} 
Let $\Omega$ be a bounded domain in $\mathbb{R}^N$, $s\in(0,1)$ and $p\in(1,\infty)$, then there exists $C(N,\Omega,p)>0$, independent of $s$, such that
\begin{align*}
    \int_{(0,1)}[u]_{s,p}^p\d \mu(s) \leq C \int_\Omega |\nabla u(x)|^p \d x, \text{ for every } u\in W_0^{1,p}(\Omega).
\end{align*}
\end{lem}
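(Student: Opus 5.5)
The plan is to prove a uniform-in-$s$ bound $[u]_{s,p}^p \le C(N,\Omega,p)\,\|\nabla u\|_{L^p(\Omega)}^p$ for every $s\in(0,1)$ and every $u\in W^{1,p}_0(\Omega)$, extended by zero outside $\Omega$. Integrating this against the finite measure $\mu$ over $(0,1)$ then gives the claim, with constant $C\,\mu((0,1))$. Strictly speaking this constant depends on $\mu$ as well; since $\mu$ is fixed throughout, I would absorb $\mu((0,1))$ into $C$. Measurability of $s\mapsto [u]_{s,p}^p$ follows from Tonelli, so the $\mu$-integral makes sense. The essential point is that the normalizing constant $C_{N,s,p}$ carries the factors $s$ and $(1-s)$, and these compensate exactly the blow-ups of the Gagliardo integral as $s\to 0$ and as $s\to 1$.

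\textbf{Step 1: splitting the double integral.} Following \cite[Proposition 2.1]{NPV2012}, substitute $y=x+h$ and use Fubini to write
\[
\iint_{\mathbb{R}^{2N}}\frac{|u(x)-u(y)|^p}{|x-y|^{N+sp}}\d x\d y=\int_{\mathbb{R}^N}\frac{\|u(\cdot+h)-u\|_{L^p}^p}{|h|^{N+sp}}\d h,
\]
and split the $h$-integral into the regions $|h|<1$ and $|h|\ge 1$.

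For $|h|<1$, I would use the translation estimate $\|u(\cdot+h)-u\|_{L^p(\mathbb{R}^N)}\le |h|\,\|\nabla u\|_{L^p}$, valid for the zero extension of $u$ by density of $C_c^\infty(\Omega)$. This gives the bound
\[
\|\nabla u\|_p^p\int_{|h|<1}|h|^{p-N-sp}\d h=\frac{|\mathbb{S}^{N-1}|}{p(1-s)}\,\|\nabla u\|_p^p .
\]

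For $|h|\ge1$, I would use $\|u(\cdot+h)-u\|_p^p\le 2^p\|u\|_p^p$. This gives the bound
\[
2^p\|u\|_p^p\,\frac{|\mathbb{S}^{N-1}|}{sp}\le 2^p\,\frac{|\mathbb{S}^{N-1}|}{sp}\,C_P(\Omega,p)\,\|\nabla u\|_p^p,
\]
where the last step is Poincaré's inequality on the bounded domain $\Omega$.

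\textbf{Step 2: multiplying by the normalizing constant.} Multiply both pieces by $C_{N,s,p}=\frac{sp(1-s)2^{2s-2}}{\pi^{(N-1)/2}}\frac{\Gamma(\frac{N+ps}{2})}{\Gamma(\frac{p+1}{2})\Gamma(2-s)}$. The factor $(1-s)$ cancels the $1/(1-s)$ from the near-diagonal part, and the factor $s$ cancels the $1/s$ from the far part. What remains is
\[
\frac{2^{2s-2}\,\Gamma(\frac{N+ps}{2})}{\pi^{(N-1)/2}\,\Gamma(\frac{p+1}{2})\,\Gamma(2-s)}
\]
multiplied by harmless factors ($s\le1$, $1-s\le1$, $p$, $2^p$, $|\mathbb{S}^{N-1}|$, $C_P$). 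For $s\in(0,1)$ we have $\Gamma(2-s)\ge \min_{[1,2]}\Gamma>0$, and $\Gamma(\frac{N+ps}{2})\le \max_{t\in[N/2,(N+p)/2]}\Gamma(t)$, so this quantity is bounded uniformly in $s$. Hence $[u]_{s,p}^p\le C(N,\Omega,p)\|\nabla u\|^p_{L^p(\Omega)}$ with $C$ independent of $s$.

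\textbf{Step 3: integrating in $s$.} Integrating the uniform bound over $(0,1)$ against $\mu$ concludes the proof.

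The only delicate point is the uniformity at both endpoints $s\to0$ and $s\to1$. This is precisely where the explicit form of $C_{N,s,p}$, and not merely its positivity, must be used, together with Poincaré's inequality to control the $L^p$ term near $s=0$.
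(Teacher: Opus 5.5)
Your proof is correct and follows essentially the same route as the paper. You split into $|x-y|<1$, where the translation (mean-value) estimate gives a factor $\frac{1}{1-s}$, and $|x-y|\ge 1$, where an $L^p$ bound plus Poincar\'e gives a factor $\frac{1}{s}$; the factor $s(1-s)$ in $C_{N,s,p}$ then cancels both singularities before you integrate in $\mu$. The differences are only cosmetic: you work directly with the translation form on $\mathbb{R}^{2N}$ instead of first splitting into $\Omega\times\Omega$ and $\Omega\times(\mathbb{R}^N\setminus\Omega)$, and you state explicitly that the constant depends on $\mu((0,1))$, a dependence the paper also absorbs without comment.
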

\begin{proof}
    We first rewrite the following integral
    \begin{align*}
       &\int_{(0,1)} C_{N, s, p}\left( \iint_{\mathbb{R}^{2 N}} \frac{|u(x)-u(y)|^{p}}{|x-y|^{N+s p}} \d x \d y\right) \d \mu\nonumber \\
       =&2\int_{(0,1)} C_{N, s, p}\left( \int_\Omega \int_{\mathbb{R}^N\setminus \Omega} \frac{|u(x)-u(y)|^{p}}{|x-y|^{N+s p}} \d x \d y\right) \d \mu \nonumber \\
       &+\int_{(0,1)} C_{N, s, p} \left(\int_\Omega  \int_\Omega \frac{|u(x)-u(y)|^{p}}{|x-y|^{N+s p}} \d x \d y\right) \d \mu :=2J_1+J_2.
    \end{align*}
   We provide the detailed computation for $J_1$. By a similar argument, the conclusion holds for $J_2$ as well. First, using the convexity property together with the Poincar\'e inequality, we obtain
    \begin{align}\label{eq2.38}
        &\int_{(0,1)} C_{N, s, p}\left( \int_\Omega \int_{\{\mathbb{R}^N\setminus \Omega\} \cap \{|x-y| \geq 1\}} \frac{|u(x)-u(y)|^{p}}{|x-y|^{N+s p}} \d x \d y\right) \d \mu \nonumber \\
        & \leq C(p) \int_{(0,1)}C_{N, s, p} \left( \int_\Omega \int_{ \{|x-y| \geq 1\}} \frac{|u(x)|^p+|u(y)|^{p}}{|x-y|^{N+s p}} \d x \d y\right) \d \mu \nonumber \\
        & \leq C(p) \int_{(0,1)}C_{N, s, p} \bigg( \int_\Omega \bigg(\int_{ |z| \geq 1} \frac{1}{|z|^{N+sp}} \d z \bigg)|u(x)|^p \d x \bigg)\d \mu \nonumber \\
        & \leq C(p,N)  \int_{(0,1)} C_{N, s, p} \bigg( \frac{1}{s} \|u\|^p_{L^p(\Omega)} \bigg)\d \mu \nonumber \\
         & \leq C(p,N,\Omega)  \int_{(0,1)} C_{N, s, p} \bigg( \frac{1}{s}  \|\nabla u\|^p_{L^p(\Omega)}\bigg)\d \mu.
    \end{align}
    On the other hand, by using H\"older inequality, we obtain 
    \begin{align}\label{eq2.39}
         &\int_{(0,1)}C_{N, s, p} \left( \int_\Omega \int_{\{\mathbb{R}^N\setminus \Omega\} \cap \{|x-y| < 1\}} \frac{|u(x)-u(y)|^{p}}{|x-y|^{N+s p}} \d x \d y\right) \d \mu \nonumber \\
         \leq & \int_{(0,1)}C_{N, s, p} \bigg( \int_\Omega \int_{|z|<r} \frac{|u(x)-u(z+x)|^p}{|z|^{N+s p}} \d z \d x \bigg) \d \mu \nonumber \\
          \leq & \int_{(0,1)}C_{N, s, p} \bigg( \int_\Omega \int_{|z|<1} \bigg(\int_0^1|\nabla(x+tz)|\d t \bigg)^p \frac{1}{|z|^{N+sp-p}}\d z \d x \bigg) \d \mu \nonumber \\
         \leq & \int_{(0,1)}C_{N, s, p} \bigg( \int_{\mathbb{R}^N} \int_{|z|<1} \int_0^1 \frac{|\nabla(x+tz)|^p}{|z|^{N+sp-p}} \d t \d z \d x \bigg) \d \mu \nonumber \\
         \leq & \int_{(0,1)} C_{N, s, p}\bigg( \int_{|z|<1} \int_0^1 \frac{\|\nabla u\|_{L^p(\mathbb{R}^N)}^p}{|z|^{N+sp-p}} \d t \d z  \bigg) \d \mu \nonumber \\
         \leq &C(p,N) \int_{(0,1)} C_{N, s, p}\bigg( \frac{1}{1-s}\|\nabla u\|_{L^p(\Omega)}^p \bigg) \d \mu.
    \end{align}
    Now from \eqref{eq2.38} and \eqref{eq2.39}, we derive
    \begin{align*}
        J_1\leq &C(p,N,\Omega) \int_{(0,1)} C_{N, s, p}\bigg( \frac{1}{1-s}+\frac{1}{s}\bigg)\|\nabla u\|_{L^p(\Omega)}^p  \d \mu \nonumber \\
        &\leq C(p,N,\Omega)\|\nabla u\|_{L^p(\Omega)}^p.
    \end{align*}
    This completes the proof.
\end{proof}

We now recall the following Sobolev inequality (see \cite[Corollary 1.57]{MZ97}), which will play a crucial role in our analysis.
\begin{lem}
    Let $\Omega$ be a open bounded subset of $\mathbb{R}^N$, $p\in(1,\infty)$ and 
    \begin{align}\label{grad sob}
        \eta=\begin{cases}
            & \frac{N}{N-p}~ \text{when } p\in(1,N),\\
            & 2 \text{ when } p\in [N,\infty).
        \end{cases}
    \end{align}
    Then there exists a constant $C=C(p,N)>0$ such that 
    \begin{align}\label{sob}
       \left( \int_\Omega |u(x)|^{\eta p}\d x\right)^\frac{1}{\eta p}\leq C|\Omega|^{\frac{1}{N}-\frac{1}{p}+\frac{1}{\eta p}}\left(\int_\Omega|\nabla u(x)|^p \d x\right)^\frac{1}{p}, \text{ for any } u\in W_{0}^{1,p}(\Omega).
    \end{align}
   
\end{lem}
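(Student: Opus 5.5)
The plan is to derive \eqref{sob} from the classical Gagliardo--Nirenberg--Sobolev inequality on $\mathbb{R}^N$ together with H\"older's inequality, treating $p<N$ and $p\ge N$ separately. By density of $C_c^\infty(\Omega)$ in $W^{1,p}_0(\Omega)$ and Fatou's lemma, it suffices to prove \eqref{sob} for $u\in C_c^\infty(\Omega)$, extended by zero to $\mathbb{R}^N$. A preliminary check shows that the power of $|\Omega|$ is exactly what scaling forces in each case. In particular, for $p<N$ it vanishes: $\frac1N-\frac1p+\frac{N-p}{Np}=0$.

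\emph{Case $p\in(1,N)$.} Here $\eta p=\frac{Np}{N-p}=p^*$. The estimate is then precisely the Sobolev inequality $\|u\|_{L^{p^*}(\mathbb{R}^N)}\le C(N,p)\|\nabla u\|_{L^p(\mathbb{R}^N)}$, with no $|\Omega|$ factor.

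\emph{Case $p\ge N$, $N\ge 2$.} We need $\eta p=2p$. Choose the exponent $q:=\frac{2Np}{N+2p}$, so that $q<N$ and its Sobolev exponent is $q^*=\frac{Nq}{N-q}=2p$. We must check that $q\ge 1$ and $q\le p$:
\begin{itemize}
\item $q\le p$ is equivalent to $N\le 2p$, which holds since $p\ge N$;
\item $q\ge1$ is equivalent to $p(2N-2)\ge N$, which holds since $p\ge N\ge 2$.
\end{itemize}
Then the Sobolev inequality with exponent $q$ (the Gagliardo--Nirenberg case if $q=1$) followed by H\"older on $\Omega$ gives
\[
\|u\|_{L^{2p}}\le C\|\nabla u\|_{L^q(\Omega)}\le C|\Omega|^{\frac1q-\frac1p}\|\nabla u\|_{L^p(\Omega)}.
\]
Finally,
\[
\frac1q-\frac1p=\frac{N+2p}{2Np}-\frac1p=\frac1N-\frac1p+\frac1{2p},
\]
which is exactly the required power of $|\Omega|$. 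The constant depends only on $N$ and $p$, because $q$ does.

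\emph{Case $N=1$, $p\ge1$.} This is the main obstacle, since the choice of $q$ above would give $q<1$. I would handle it directly:
\[
\|u\|_{\infty}\le\int_\Omega|u'|\le|\Omega|^{1-\frac1p}\|u'\|_{L^p},
\qquad
\|u\|_{L^{2p}(\Omega)}\le|\Omega|^{\frac1{2p}}\|u\|_\infty .
\]
Combining these yields the power $1-\frac1p+\frac1{2p}$, again matching \eqref{sob}.

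The remaining step is routine: passing from $C_c^\infty(\Omega)$ to $W^{1,p}_0(\Omega)$ by approximation in $W^{1,p}$, using a.e. convergence along a subsequence together with Fatou's lemma on the left-hand side.
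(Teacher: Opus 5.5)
Your proof is correct. The paper gives no proof of this lemma; it simply quotes it from \cite[Corollary 1.57]{MZ97}, so yours is a genuinely self-contained alternative to a bare citation.

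All three steps check out:
\begin{itemize}
\item For $p<N$ the power of $|\Omega|$ vanishes, and the estimate is exactly the Sobolev inequality.
\item For $p\ge N$, $N\ge 2$, your auxiliary exponent $q=\frac{2Np}{N+2p}$ in fact satisfies $1<q<p$ and $q<N$ strictly (so the parenthetical about $q=1$ is never needed), with $q^*=2p$. Sobolev at level $q$ followed by H\"older on $\Omega$ gives precisely $|\Omega|^{\frac1q-\frac1p}=|\Omega|^{\frac1N-\frac1p+\frac1{2p}}$.
\item For $N=1$ the elementary bound $\|u\|_\infty\le\int_\Omega|u'|$ yields the same power.
\end{itemize}

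Your route makes several things transparent that the citation leaves implicit. The constant depends only on $(N,p)$, and only $|\Omega|<\infty$ is used. The choice $\eta=2$ for $p\ge N$ is merely a normalization: any $\eta>1$ works with $q=\frac{N\eta p}{N+\eta p}$, which then lies in $[1,\min\{p,N\})$ and produces the power $\frac1N-\frac1p+\frac1{\eta p}$.

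One small caveat concerns the density step. The paper's notation section defines $W^{1,p}_0(\Omega)$ through the condition $u=0$ outside $\Omega$, not as the closure of $C_c^\infty(\Omega)$. For irregular $\Omega$ these two spaces can differ, so density of $C_c^\infty(\Omega)$ is not automatic under that definition. The fix is easy:
\begin{itemize}
\item mollify the zero extension in $\mathbb{R}^N$, so the Sobolev inequality at level $q$ holds on $\mathbb{R}^N$;
\item use that $\nabla u=0$ a.e.\ on $\{u=0\}$, so $\|\nabla u\|_{L^q(\mathbb{R}^N)}=\|\nabla u\|_{L^q(\Omega)}$;
\item then apply the same H\"older step (and, for $N=1$, the same $L^\infty$ bound via absolute continuity).
\end{itemize}
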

We now introduce the notion of a weak solution for problem \eqref{MP}.
\begin{definition}[Weak solution]\label{def2.11}
    A function $u\in W_{loc}^{1,p}(\Omega)\cap L^{p-1}_{sp,\mu}(\mathbb{R}^N)$ is called a weak subsolution of \eqref{MP} if, for every $\Omega'\Subset \Omega$ and every nonnegative function $v\in W_0^{1,p}(\Omega')$ such that
\begin{align}\label{eq2.10} 
\int_{(0,1)}C_{N, s, p} &\left(\iint_{\mathbb{R}^{2 N}} \frac{|u(x)-u(y)|^{p-2}(u(x)-u(y))(v(x)-v(y))}{|x-y|^{N+s p}} \d x \d y\right) \d \mu(s)\nonumber \\ 
&+ \alpha \int_{\Omega} |\nabla u(x)|^{p-2}\nabla u(x) \cdot \nabla v(x) \d x \leq 0. 
\end{align} 
Similarly, a function $u \in W_{loc}^{1,p}(\Omega)\cap L^{p-1}_{sp,\mu}(\mathbb{R}^N)$ is called a weak supersolution of \eqref{MP} if the integral in \eqref{eq2.10} is nonnegative for every $\Omega'\Subset \Omega$ and every nonnegative function $v\in W^{1,p}_0(\Omega')$.

Moreover, a function $u\in W_{loc}^{1,p}(\Omega)\cap L^{p-1}_{sp,\mu}(\mathbb{R}^N)$ is called a weak solution of \eqref{MP} if the integral in \eqref{eq2.10} is equal to zero for every $v\in W^{1,p}_0(\Omega')$.
\end{definition}
    
It is important to observe that, in view of Definition \ref{def2.11}, the following properties hold:
\begin{itemize}
\item[(i)] $u$ is a weak subsolution of \eqref{MP} if and only if $-u$ is a weak supersolution of \eqref{MP}.

\item[(ii)] For any $c\in \mathbb{R}$, the function $u+c$ is a weak solution of \eqref{MP} if and only if $u$ is a weak solution of \eqref{MP}$.$

\item[(iii)] $u$ is a weak solution of \eqref{MP} if and only if $-u$ is also a weak solution of \eqref{MP}$.$
\end{itemize}
Before proceeding to the proofs of our results, we introduce the following notations to be used throughout the paper for simplified presentation.\\

\noindent{\bf Notation:} We use the following notations:
   \begin{itemize}
       \item We denote $u_+=\max\{u,0\}$ and $u_-=\max\{-u,0\}=-\min\{u,0\}$ for any $u\in \mathbb{R}$, so that $u=u_+-u_-$ and $|u|=u_++u_-$.
       \item The barred integral sign $\fint$ denotes the corresponding integral average.
       \item $B_r(x_0)$ is the open ball centered at $x_0$ with radius $r>0$.
       \item We will use the following three notations for simplification of the fractional integral part of the operator whenever necessary: 
       \begin{align*}
           \mathcal{A}(u(x,y)):=&|u(x)-u(y)|^{p-2}(u(x)-u(y)) \text{  and  } \d \nu:=\frac{\d x \d y}{|x-y|^{N+sp}}.
       \end{align*}
       \item For a bounded domain $\Omega \subset \mathbb{R}^N$, $W_0^{1,p}(\Omega)$ denotes the standard Sobolev space, which is defined as 
\begin{align*}
    W_0^{1,p}(\Omega):=\{u\in W^{1,p}(\Omega):u=0 \text{ in } \mathbb{R}^N\setminus \Omega\}.
\end{align*}
   \end{itemize}

   We begin with the following lemma.
\begin{lem}\label{lmn2.15}
    A function $u$ is a weak subsolution and a weak supersolution of \eqref{MP} if and only if $u$ is a weak solution of \eqref{MP}.
\end{lem}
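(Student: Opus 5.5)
The plan is to treat the two directions separately, working throughout with the bilinear-type form
\[
\mathcal{E}(u,v):=\int_{(0,1)}C_{N,s,p}\left(\iint_{\mathbb{R}^{2N}}\mathcal{A}(u(x,y))(v(x)-v(y))\,\d\nu\right)\d\mu(s)+\alpha\int_{\Omega}|\nabla u|^{p-2}\nabla u\cdot\nabla v\,\d x ,
\]
which is the left-hand side of \eqref{eq2.10}. The first point to verify is that $\mathcal{E}(u,\cdot)$ is well defined and \emph{linear} in $v$ for each fixed $u\in W^{1,p}_{loc}(\Omega)\cap L^{p-1}_{sp,\mu}(\mathbb{R}^N)$ and $v\in W^{1,p}_0(\Omega')$ with $\Omega'\Subset\Omega$. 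Linearity is immediate because $v$ enters only through $v(x)-v(y)$ and $\nabla v$. Finiteness is the only real check.

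For finiteness, I would split $\mathbb{R}^{2N}$ into $\Omega''\times\Omega''$, where $\Omega'\Subset\Omega''\Subset\Omega$, and its complement. On $\Omega''\times\Omega''$, H\"older's inequality bounds the integrand by $\big(\iint|u(x)-u(y)|^p\d\nu\big)^{(p-1)/p}\big(\iint|v(x)-v(y)|^p\d\nu\big)^{1/p}$. Integrating in $\mu$ and applying H\"older again in $\d\mu$, the $v$ factor is controlled by the preceding lemma, i.e.\ by $\|\nabla v\|_{L^p}$. The $u$ factor is controlled by the analogue of that lemma for $u\in W^{1,p}(\Omega'')$: the same computation with the bounds $\frac1s+\frac1{1-s}$ times $C_{N,s,p}$, using that $C_{N,s,p}(\frac1s+\frac1{1-s})$ is bounded on $(0,1)$.

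On the complement, $v$ vanishes when both variables lie outside $\Omega'$. When $x\in\Omega'$ and $y\notin\Omega''$, we have $|x-y|\ge c\,|y-x_0|$, and the integrand is bounded by $(|u(x)|^{p-1}+|u(y)|^{p-1})|v(x)|/|x-y|^{N+sp}$. These terms are finite by the tail condition $\operatorname{Tail}(u;x_0,r)<\infty$ and by $u\in L^p(\Omega'')$. I expect this integrability bookkeeping, uniform in $s$ against the measure $\mu$, to be the main technical obstacle, although it is routine.

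With this in hand, the forward direction goes as follows. If $u$ is both a sub- and supersolution, then $\mathcal{E}(u,v)\le0$ and $\mathcal{E}(u,v)\ge0$, so $\mathcal{E}(u,v)=0$ for every nonnegative $v\in W^{1,p}_0(\Omega')$. For an arbitrary $v\in W^{1,p}_0(\Omega')$, write $v=v_+-v_-$, where $v_\pm\in W^{1,p}_0(\Omega')$ are nonnegative. Linearity then gives $\mathcal{E}(u,v)=\mathcal{E}(u,v_+)-\mathcal{E}(u,v_-)=0$, so $u$ is a weak solution.

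The converse is trivial. If $\mathcal{E}(u,v)=0$ for all $v\in W^{1,p}_0(\Omega')$, it holds in particular for nonnegative $v$, so both inequalities in Definition \ref{def2.11} hold, and $u$ is simultaneously a weak subsolution and a weak supersolution.
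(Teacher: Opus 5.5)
Your proof is correct and follows essentially the paper's argument. The paper tests the subsolution inequality with $v_+$ and the supersolution inequality with $v_-$, subtracts to get $\le 0$ for $v=v_+-v_-$, and then replaces $v$ by $-v$; this is the same linearity step as your $\mathcal{E}(u,v)=\mathcal{E}(u,v_+)-\mathcal{E}(u,v_-)=0$. Your extra check that $\mathcal{E}(u,\cdot)$ is finite on $W^{1,p}_0(\Omega')$, so that splitting $v$ never produces $\infty-\infty$, is something the paper leaves implicit, and your sketch of it is sound: it uses the boundedness of $C_{N,s,p}\bigl(\frac1s+\frac1{1-s}\bigr)$ and the tail condition for the far-field part. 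For the $u$-factor, take $\Omega''$ Lipschitz and extend $u$, since the difference-quotient estimate needs segments to stay inside the domain.
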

\begin{proof}
     Suppose that $u$ is a weak subsolution and a weak supersolution of \eqref{MP}. For every $\Omega'\Subset \Omega$ and  $v \in W_0^{1,p}(\Omega')$, then $v_+,v_-\in W_0^{1,p}(\Omega')$. Given that $u$ is a weak subsolution and choosing $v_+$ as the test function in the weak formulation \eqref{eq2.10}, we obtain
\begin{align}\label{eq2.11}
    \int_{(0,1)}C_{N, s, p}\left(\iint_{\mathbb{R}^{2 N}} {\mathcal{A}(u(x,y))(v_+(x)-v_+(y))} \d\nu\right) \d \mu(s) \nonumber\\
    +\alpha \int_{\Omega} |\nabla u(x)|^{p-2}\nabla u(x) \cdot \nabla v_+(x) \d x  \leq 0.
\end{align}
   Similarly, utilizing $u$ is a weak supersolution and choosing $v_-$ as the test function in the weak formulation \eqref{eq2.10}, we get
   \begin{align}\label{eq2.12}
    \int_{(0,1)}C_{N, s, p}\left(\iint_{\mathbb{R}^{2 N}} {\mathcal{A}(u(x,y))(v_-(x)-v_-(y))}\d \nu \right) \d \mu(s) \nonumber\\
    +\alpha \int_{\Omega} |\nabla u(x)|^{p-2}\nabla u(x) \cdot \nabla v_-(x) \d x \geq 0.
\end{align}
Using \eqref{eq2.11}, \eqref{eq2.12} and $v=v_+-v_-$, we deduce
\begin{align}\label{eq2.13}
    \int_{(0,1)}C_{N, s, p}\left(\iint_{\mathbb{R}^{2 N}} {\mathcal{A}(u(x,y))(v(x)-v(y))}\d \nu\right) \d \mu(s) \nonumber\\
   +\alpha \int_{\Omega} |\nabla u(x)|^{p-2}\nabla u(x) \cdot \nabla v(x) \d x \leq 0.
\end{align}
The opposite inequality holds by replacing $v$ with $-v$ in \eqref{eq2.13}. Therefore, $u$ is a weak solution of \eqref{MP}. Conversely, if $u$ is a weak solution of \eqref{MP}, then by Definition \ref{def2.11}, we conclude that $u$ is both a weak subsolution and a weak supersolution.
\end{proof}
\begin{lem}\label{lmn2.16}
If $u$ is a weak subsolution of \eqref{MP}, then $u_+$ is a weak subsolution of \eqref{MP}.
\end{lem}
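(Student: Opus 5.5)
We must show that if $u$ is a weak subsolution, then so is $u_+$. The plan is to prove $u_+\in W^{1,p}_{loc}(\Omega)\cap L^{p-1}_{sp,\mu}(\mathbb{R}^N)$ and then verify \eqref{eq2.10} for $u_+$. We do this by testing the inequality for $u$ with a suitable approximation of $v\,\chi_{\{u>0\}}$ and passing to the limit.

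\emph{Membership in the solution space.} Since $|u_+|\le |u|$, every tail of $u_+$ is bounded by the corresponding tail of $u$, so $u_+\in L^{p-1}_{sp,\mu}(\mathbb{R}^N)$. The standard lattice property gives $u_+\in W^{1,p}_{loc}(\Omega)$ with $\nabla u_+=\chi_{\{u>0\}}\nabla u$ a.e.

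\emph{Test functions.} Fix $\Omega'\Subset\Omega$ and a nonnegative $v\in W^{1,p}_0(\Omega')$. For $\varepsilon>0$ define
\[
\psi_\varepsilon:=\min\{u_+/\varepsilon,1\}\,v .
\]
This function is nonnegative and lies in $W^{1,p}_0(\Omega')$ (if needed, first take $v$ bounded and then approximate). Inserting $\psi_\varepsilon$ into \eqref{eq2.10} for $u$, the local part equals
\[
\alpha\int |\nabla u|^{p-2}\nabla u\cdot\Big(\tfrac{1}{\varepsilon}\chi_{\{0<u<\varepsilon\}}v\nabla u+\min\{u_+/\varepsilon,1\}\nabla v\Big)\d x .
\]
The first term is nonnegative and may be dropped. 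The second converges to $\alpha\int|\nabla u_+|^{p-2}\nabla u_+\cdot\nabla v\,\d x$ by dominated convergence, since $\min\{u_+/\varepsilon,1\}\to\chi_{\{u>0\}}$. This gives
\[
\int_{(0,1)}C_{N,s,p}\iint \mathcal{A}(u(x,y))(\psi_\varepsilon(x)-\psi_\varepsilon(y))\d\nu\,\d\mu+\alpha\int|\nabla u_+|^{p-2}\nabla u_+\cdot\nabla v\,\d x\le o(1).
\]

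\emph{Nonlocal term (main obstacle).} We must compare $\mathcal{A}(u(x,y))(\psi_\varepsilon(x)-\psi_\varepsilon(y))$ in the limit with $\mathcal{A}(u_+(x,y))(v(x)-v(y))$. Write $\psi:=\lim_\varepsilon\psi_\varepsilon=v\chi_{\{u>0\}}$ and check the cases pointwise.

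When $u(x),u(y)>0$, the two expressions coincide.

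When $u(x),u(y)\le 0$, both vanish.

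When $u(x)>0\ge u(y)$, we have $\mathcal{A}(u)\,v(x)\ge \mathcal{A}(u_+)\,v(x)$, because $u(x)-u(y)\ge u(x)=u_+(x)-u_+(y)$ and $t\mapsto|t|^{p-2}t$ is increasing. The $\mathcal{A}(u_+)$ expression, however, also contains the term $-\mathcal{A}(u_+(x,y))v(y)\le 0$, which has no counterpart on the $u$ side.

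Hence
\[
\mathcal{A}(u)(\psi(x)-\psi(y))\ge \mathcal{A}(u_+)(v(x)-v(y))+\mathcal{A}(u_+(x,y))\,v(y)\,\chi_{\{u(x)>0\ge u(y)\}}+(\text{symmetric term}),
\]
and the added terms are nonnegative. This is the right direction for passing from the inequality for $u$ to the one for $u_+$. I expect this case analysis to be the delicate point and will check it carefully.

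\emph{Limit $\varepsilon\to0$.} To justify the passage to the limit under $\iint\d\nu\,\d\mu$, dominate uniformly in $\varepsilon$ and $s$. On $\supp v$ we can use $|\psi_\varepsilon(x)-\psi_\varepsilon(y)|\lesssim|v(x)-v(y)|+v\,|u(x)-u(y)|/\varepsilon$. The singular part may need Fatou's lemma instead, used together with a sign observation: near the diagonal, $\mathcal{A}(u)(\min\{u_+/\varepsilon,1\}(x)-\min\{u_+/\varepsilon,1\}(y))\ge0$ by monotonicity. So split
\[
\psi_\varepsilon(x)-\psi_\varepsilon(y)=m_\varepsilon(x)(v(x)-v(y))+v(y)(m_\varepsilon(x)-m_\varepsilon(y)),\qquad m_\varepsilon:=\min\{u_+/\varepsilon,1\}.
\]
The second piece gives a nonnegative contribution, which is dropped exactly as in the local part. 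The first piece is dominated by $|u(x)-u(y)|^{p-1}|v(x)-v(y)|$, which is integrable on $\Omega'\times\Omega'$ uniformly in $s$ via the preceding gradient-bound lemma (applied on a slightly larger domain with a cutoff). Off the diagonal it is controlled by the tail of $u$ over $\mathbb{R}^N\setminus\Omega'$. Dominated convergence then yields the limit $\iint\mathcal{A}(u)\chi_{\{u(x)>0\}}(v(x)-v(y))$.

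Finally, apply the pointwise comparison with $\chi_{\{u(x)>0\}}$ and symmetrization. This shows that this limit dominates $\iint\mathcal{A}(u_+)(v(x)-v(y))$, and hence that \eqref{eq2.10} holds for $u_+$.
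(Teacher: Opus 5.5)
\paragraph{A gap in the nonlocal limit.} Your test function $\psi_\varepsilon=m_\varepsilon v$ is the paper's $u_m v$ with $m=1/\varepsilon$. Your treatment of the local term is fine, and so is your pointwise case analysis for $\mathcal{A}(u(x,y))(\psi(x)-\psi(y))$ with $\psi=v\chi_{\{u>0\}}$. The proof breaks when you pass to the limit in the nonlocal term.

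You split $\psi_\varepsilon(x)-\psi_\varepsilon(y)=m_\varepsilon(x)(v(x)-v(y))+v(y)(m_\varepsilon(x)-m_\varepsilon(y))$ and discard the nonnegative second piece. What remains in the limit is $\iint\mathcal{A}(u(x,y))\chi_{\{u(x)>0\}}(v(x)-v(y))\,\d\nu$, not $\iint\mathcal{A}(u(x,y))(\psi(x)-\psi(y))\,\d\nu$. So your case analysis no longer applies, and the claimed domination of $\iint\mathcal{A}(u_+(x,y))(v(x)-v(y))\,\d\nu$ is false.

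On $\{u(y)>0\ge u(x)\}$ your integrand vanishes, while the target integrand $u(y)^{p-1}(v(y)-v(x))$ can be positive. Symmetrization does not rescue this. Swapping variables on that set, your limit minus the target equals
\[
\iint_{\{u(x)>0\ge u(y)\}}\Big[(u(x)-u(y))^{p-1}-2u(x)^{p-1}\Big](v(x)-v(y))\,\d\nu ,
\]
which has no sign. For instance, take $u\ge0$ vanishing on a set of positive measure and $v\not\equiv0$ supported where $u>0$. Then the difference is $-\iint_{\{u(x)>0=u(y)\}}u(x)^{p-1}v(x)\,\d\nu<0$. Your limiting inequality is therefore weaker than the one you need and does not imply it.

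The discarded piece is exactly the missing mass. It converges to $\mathcal{A}(u(x,y))v(y)(\chi_{\{u(x)>0\}}-\chi_{\{u(y)>0\}})$, which equals $(u(y)-u(x))^{p-1}v(y)$ on the mixed set above. The analogy with the local part fails: there the $\nabla m_\varepsilon$ term has no counterpart in the target, but here it does.

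\paragraph{How to repair it.} There are two ways.
\begin{enumerate}
\item Keep the second piece and apply Fatou's lemma to it rather than dropping it; you already mention Fatou. The piece is nonnegative and converges pointwise, so the $\liminf$ of the full nonlocal term is at least $\int_{(0,1)}C_{N,s,p}\iint\mathcal{A}(u(x,y))(\psi(x)-\psi(y))\,\d\nu\,\d\mu(s)$. Your case analysis then finishes the proof.
\item Argue as the paper does and run your case analysis at fixed $\varepsilon$, using that $m_\varepsilon\ge0$ is nondecreasing in $u$ and $v\ge0$. This gives
\[
\mathcal{A}(u(x,y))(\psi_\varepsilon(x)-\psi_\varepsilon(y))\ge\mathcal{A}(u_+(x,y))\,m_\varepsilon(z)\,(v(x)-v(y)),
\]
where $z\in\{x,y\}$ is the point at which $u$ is larger. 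The right-hand side is dominated by $|u_+(x)-u_+(y)|^{p-1}|v(x)-v(y)|$. It tends to $\mathcal{A}(u_+(x,y))(v(x)-v(y))$, because wherever $u_+(x)\neq u_+(y)$ the larger point has $u>0$. Dominated convergence then applies directly, and the lack of an $\varepsilon$-uniform majorant for the term with $m_\varepsilon(x)-m_\varepsilon(y)$ never matters.
\end{enumerate}
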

\begin{proof}
    Let  $u_m=\min\{mu_+,1\}$, $m\in \mathbb{N}$ in $\Omega$. Then we have
    \begin{enumerate}[(i)]
        \item $(u_m)$ is an increasing sequence of functions,
        \item $0\leq u_m\leq 1$ for every $m\in \mathbb{N}$, and 
        \item $(u_m)$ converges to $1$ when $u_+>0$.
    \end{enumerate}
  For every nonnegative $v\in C_c^\infty(\Omega')$ we choose $u_mv\in W_0^{1,p}(\Omega')$ as a test function in weak formulation \eqref{eq2.10} to get
    \begin{align}\label{eq2.14}
        \int_{(0,1)}C_{N, s, p}\left(\iint_{\mathbb{R}^{2 N}} {\mathcal{A}(u(x,y))(u_m(x)v(x)-u_m(y)v(y))}\d \nu\right) \d \mu(s)\nonumber\\
        +\alpha \int_{\Omega} |\nabla u(x)|^{p-2}\nabla u(x) \cdot \nabla u_m(x)v(x) \d x \leq 0.
    \end{align}
    Let
    \begin{align*}
    I_1&=  \int_{(0,1)}C_{N, s, p} \bigg(\iint_{\mathbb{R}^{2 N}} {\mathcal{A}(u(x,y))(u_m(x)v(x)-u_m(y)v(y))}\d \nu\bigg) \d \mu(s), \text{ and }\\
       I_2&=\int_{\Omega} |\nabla u|^{p-2}\nabla u.\nabla (u_mv)\d x.
    \end{align*}
    We first estimate $I_1$ for all $x,y\in \mathbb{R}^N$. We divide the estimation into three cases.
    
    \noindent \textbf{ Case-I: When $u(x)>u(y)$.} 
    Observe that $u_m(x)=0$ implies $u_m(y)=0$. Thus we get
    \begin{align}\label{eq2.20}
        |u(x)-u(y)|^{p-2}(u(x)-u(y))(u_m(x)v(x)-u_m(y)v(y))=0.
    \end{align}
If $u_m(y)>0$, then we have $u(y)=u_+(y)$. Given the condition $u(x)>u(y)$ implies that $u(x)=u_+(x)$. Thus, $u_m(x)>u_m(y)$. Therefore,
     \begin{align}\label{eq2.21}
        &|u(x)-u(y)|^{p-2}(u(x)-u(y))(u_m(x)v(x)-u_m(y)v(y))\nonumber \\
        \geq & (u_+(x)-u_+(y))^{p-1}u_m(x)(v(x)-v(y)).
    \end{align}
 Again, $u_m(y)=0$ and $u_m(x)>0$ imply $u(y)\leq 0<u(x)$. Thus, we have
     \begin{align}\label{eq2.22}
        &|u(x)-u(y)|^{p-2}(u(x)-u(y))(u_m(x)v(x)-u_m(y)v(y))=(u(x)-u(y))^{p-1}u_m(x)v(x) \nonumber\\
        \geq &(u_+(x)-u_+(y))^{p-1}u_m(x)v(x) \nonumber \\
         \geq &(u_+(x)-u_+(y))^{p-1}u_m(x)(v(x)-v(y)).
    \end{align}
      Therefore, using \eqref{eq2.20}, \eqref{eq2.21} and \eqref{eq2.22}, we conclude that
      \begin{align}\label{eq2.23}
          &|u(x)-u(y)|^{p-2}(u(x)-u(y))(u_m(x)v(x)-u_m(y)v(y)) \nonumber \\
          \geq & (u_+(x)-u_+(y))^{p-1}u_m(x)(v(x)-v(y))
      \end{align}
\noindent \textbf{Case-II: When $u(x)=u(y)$.} In this case, we have $u_+(x)=u_+(y)$. Therefore, the estimate \eqref{eq2.23} holds.

\noindent \textbf{Case-III: When $u(x)<u(y)$.}       Interchanging the role of $x$ and $y$ in \textbf{Case-I}, we obtain
      \begin{align}\label{eq2.24}
          &|u(x)-u(y)|^{p-2}(u(x)-u(y))(u_m(x)v(x)-u_m(y)v(y)) \nonumber \\
          \geq &(u_+(y)-u_+(x))^{p-1}u_m(y)(v(y)-v(x))\nonumber \\
          =&(u_+(x)-u_+(y))^{p-1}u_m(x)(v(x)-v(y)).
      \end{align}

      We now estimate $I_2$ as below.
    \begin{align}\label{eq2.19}
        I_2=\int_{\Omega} |\nabla u|^{p-2}\nabla u.\nabla (u_mv)\d x &= \int_{\Omega}u_m |\nabla u|^{p-2}\nabla u.\nabla v\d x+m\int_{\Omega\cap \{0<u_+<\frac{1}{m}\}} |\nabla u|^p v \d x \nonumber\\
        &\geq \int_{\Omega}u_m |\nabla u|^{p-2}\nabla u.\nabla v\d x.
    \end{align}
      Therefore, using \eqref{eq2.23}, \eqref{eq2.24} and \eqref{eq2.19} in \eqref{eq2.14}, we derive 

      \begin{align}\label{eq2.25}
        \int_{(0,1)}C_{N, s, p} \left(\iint_{\mathbb{R}^{2 N}} {|u_+(x)-u_+(y)|^{p-2}(u_+(x)-u_+(y))(v(x)-v(y))}\d \nu \right) \d \mu(s) \nonumber\\
       +\alpha \int_{\Omega} |\nabla u_+|^{p-2}\nabla u_+.\nabla (v)\d x\leq 0.
    \end{align}
    Therefore, by using the density argument in \eqref{eq2.25} for every $v\in W_0^{1,p}(\Omega)$, we get the desired result that $u_+$ is a weak subsolution of \eqref{MP}. This completes the proof.
\end{proof}
{
\begin{lem}\label{lmn2.17}
    If $u$ is a weak supersolution of \eqref{MP}, then $u_-$ is a weak subsolution of \eqref{MP}. 
\end{lem}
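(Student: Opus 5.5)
The plan is to reduce to Lemma \ref{lmn2.16} through the sign symmetries noted after Definition \ref{def2.11}. Property (i) says that $u$ is a weak supersolution of \eqref{MP} if and only if $-u$ is a weak subsolution. Applying Lemma \ref{lmn2.16} to $-u$ then shows that $(-u)_+$ is a weak subsolution of \eqref{MP}. Since $(-u)_+=\max\{-u,0\}=u_-$ by the notation fixed above, this is exactly the claim.

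The first step is to verify property (i) directly from \eqref{eq2.10}. Replacing $u$ by $-u$ changes the sign of both $\mathcal{A}(u(x,y))$ and $|\nabla u|^{p-2}\nabla u$, because both are odd in $u$. Hence the full left-hand side of \eqref{eq2.10}, tested against any nonnegative $v\in W_0^{1,p}(\Omega')$, simply changes sign, and the inequality $\geq 0$ for $u$ becomes $\leq 0$ for $-u$.

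The second step is to check the function-space membership. If $u\in W^{1,p}_{loc}(\Omega)\cap L^{p-1}_{sp,\mu}(\mathbb{R}^N)$, then $-u$ lies in the same space, since the tail depends only on $|u|$. After Lemma \ref{lmn2.16} is applied, $u_-=(-u)_+$ also lies in this space: truncation preserves $W^{1,p}_{loc}$, and $|u_-|\le |u|$ controls the tail.

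I expect no real obstacle here. The only points to watch are that the sign conventions in property (i) are exact and that Lemma \ref{lmn2.16} is stated for arbitrary weak subsolutions, which it is.

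A more self-contained alternative would redo the proof of Lemma \ref{lmn2.16} with the test functions $u_m=\min\{m u_-,1\}$ and the supersolution inequality. That argument is the mirror image of the one given there, so I would present only the reduction.
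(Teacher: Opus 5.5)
Your proposal is correct and is the paper's argument: the paper likewise passes from the supersolution $u$ to the subsolution $-u$ via property (i) after Definition~\ref{def2.11}, then applies Lemma~\ref{lmn2.16} to obtain that $(-u)_+=u_-$ is a weak subsolution. You also check the sign flip in \eqref{eq2.10} and the function-space membership explicitly; the paper leaves both implicit.
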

\begin{proof}
    Since $u$ is a weak supersolution of \eqref{MP}, then $(-u)$ is a weak subsolution of \eqref{MP}. From Lemma \ref{lmn2.16}, we conclude that $u_-$ is a weak subsolution of \eqref{MP}.
\end{proof}
}
We now recall the following two lemmas, which are useful to prove boundedness, H\"older continuity, and the weak Harnack inequality.
\begin{lem}[Lemma $3.1$ \cite{DKP2016}]\label{lmn2.9}
   Let $p\geq 1$, $a,b\in \mathbb{R}^N$, and $\epsilon \in (0,1]$. Then
\begin{align*}
|a|^p\leq |b|^p+C\epsilon|b|^p+(1+C\epsilon)\epsilon^{1-p}|a-b|^p,
\end{align*}
where $C:=(p-1)\Gamma(\max{1,p-2})$ and $\Gamma$ denotes the Gamma function.
\end{lem}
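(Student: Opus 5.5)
The plan is to reduce the vector inequality to a scalar convexity estimate and then to an elementary bound on $(1+\epsilon)^{p-1}$. One warning first: carrying this out shows that the constant $C=(p-1)\Gamma(\max\{1,p-2\})$, read literally, is \emph{not} sufficient for every $p$. So the last step must either reinterpret or enlarge the constant.

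\textbf{Step 1 (reduction to scalars).} By the triangle inequality $|a|\le |b|+|a-b|$, and $t\mapsto t^p$ is increasing on $[0,\infty)$. It therefore suffices to bound $(x+y)^p$ for $x=|b|\ge 0$ and $y=|a-b|\ge 0$.

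\textbf{Step 2 (convexity splitting).} For $p\ge 1$ the map $t\mapsto t^p$ is convex on $[0,\infty)$. Set $\lambda=\frac{1}{1+\epsilon}\in[\tfrac12,1)$ and write
\begin{equation*}
x+y=\lambda\frac{x}{\lambda}+(1-\lambda)\frac{y}{1-\lambda}.
\end{equation*}
Convexity then gives
\begin{equation*}
(x+y)^p\le \lambda^{1-p}x^p+(1-\lambda)^{1-p}y^p=(1+\epsilon)^{p-1}\bigl(x^p+\epsilon^{1-p}y^p\bigr).
\end{equation*}
This is exactly the structure of the claim: the right-hand side of the lemma equals $(1+C\epsilon)\bigl(|b|^p+\epsilon^{1-p}|a-b|^p\bigr)$. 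The whole lemma is thus reduced to the scalar inequality
\begin{equation*}
(1+\epsilon)^{p-1}\le 1+C\epsilon \quad\text{for all }\epsilon\in(0,1].
\end{equation*}

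\textbf{Step 3 (the scalar bound).} This is the main obstacle.
\begin{itemize}
\item For $1\le p\le 2$, the map $\epsilon\mapsto(1+\epsilon)^{p-1}$ is concave. Hence $(1+\epsilon)^{p-1}\le 1+(p-1)\epsilon$, and since $\Gamma(1)=1$ the stated constant works.
\item For $p>2$, the mean value theorem gives $(1+\epsilon)^{p-1}-1=(p-1)(1+\theta)^{p-2}\epsilon$ for some $\theta\in(0,\epsilon)$. This yields $(1+\epsilon)^{p-1}\le 1+(p-1)2^{p-2}\epsilon$ on $(0,1]$.
\end{itemize}

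\textbf{The stated constant fails for some $p$.} Take $p=3$, so the literal constant is $C=2\Gamma(1)=2$. At $\epsilon=1$ the scalar bound would require $4\le 3$. The lemma itself also fails there: with $b=1$ and $a=2$ in $\mathbb{R}$, it would assert $8\le 3+3$. Hence $\Gamma(\max\{1,p-2\})$ must be read as (or replaced by) a quantity dominating $\max_{\theta\in[0,1]}(1+\theta)^{p-2}=\max\{1,2^{p-2}\}$ for $p\ge 2$; for instance $\max\{1,2^{p-2}\}$ itself. Under that reading, the proof is completed by combining Steps 1–3. The constant only enters the later arguments as some $C=C(p)$, so the downstream use is unaffected.
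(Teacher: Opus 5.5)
Your proposal is correct, and so is your counterexample: for $p=3$ the quoted constant is $C=2\Gamma(1)=2$, and $\epsilon=1$, $b=1$, $a=2$ turn the asserted inequality into $8\le 6$. So the lemma as transcribed here is false. The paper gives no argument of its own; it imports the statement from \cite{DKP2016}. There is therefore no internal proof to compare with. Your route (triangle inequality, convex splitting with weight $\lambda=\frac{1}{1+\epsilon}$, then a bound on $(1+\epsilon)^{p-1}$) is the natural proof of a corrected version.

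One refinement: your reduction is lossless. For $a=(1+\epsilon)b$ both Step~1 and Step~2 are equalities, since the points $x/\lambda$ and $y/(1-\lambda)$ coincide. Hence, for each fixed $\epsilon$, the inequality holds for all $a,b$ if and only if $(1+\epsilon)^{p-1}\le 1+C\epsilon$. The admissible constants are therefore exactly $C\ge\max\{p-1,\,2^{p-1}-1\}$. For $p\ge 2$ the sharp bound is the chord $(1+\epsilon)^{p-1}\le 1+(2^{p-1}-1)\epsilon$ of the convex map $\epsilon\mapsto(1+\epsilon)^{p-1}$ on $[0,1]$.

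Two consequences follow:
\begin{itemize}
\item Your mean-value constant $(p-1)2^{p-2}$ is sufficient but not forced, so ``must be read as'' overstates it.
\item The quoted $(p-1)\Gamma(\max\{1,p-2\})$ is valid exactly when it dominates $\max\{p-1,2^{p-1}-1\}$. This holds for $p\le 2$ and for $p\ge 7$, but fails for every $p\in(2,6]$; at $p=4,5,6$ it gives $3<7$, $8<15$ and $30<31$.
\end{itemize}

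Your comment on downstream use is also right. In Theorem~\ref{lmn3.1} the lemma is applied with $\epsilon=\frac{\phi(x)-\phi(y)}{\max\{1,2C\}\phi(x)}$, which only needs some $C=C(p)$. Lemma~\ref{lmn7.1} uses it in the same qualitative way. Replacing the constant by a correct one therefore changes nothing else in the paper.
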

\begin{lem}\cite[Lemma 4.1]{D2012}\label{MI Lemma}
    Let $\{P_j\}_{j=0}^\infty$ be a sequence of positive real numbers such that $P_{j+1}\leq C_1 C_2^j P^{1+\beta}_j$, where $P_0\leq C_1^{-\frac{1}{\beta}}C_2^{-\frac{1}{\beta^2}}$ and $C_1,C_2>1$, $\beta>0$ are some constants. Then $\lim_{j\rightarrow \infty} P_j=0.$
\end{lem}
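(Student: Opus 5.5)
The plan is a direct induction giving a geometric decay rate for $P_j$. Specifically, we will show
\[
P_j \le C_2^{-\frac{j}{\beta}} P_0 \qquad \text{for every } j\ge 0 .
\]
Since $C_2>1$ and $P_j>0$, the right-hand side tends to $0$ as $j\to\infty$, and $\lim_{j\to\infty}P_j=0$ follows at once.

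For $j=0$ the claim is trivial. For the induction step, assume $P_j\le C_2^{-j/\beta}P_0$. The recursive hypothesis $P_{j+1}\le C_1C_2^jP_j^{1+\beta}$ and the monotonicity of $t\mapsto t^{1+\beta}$ on $(0,\infty)$ give
\[
P_{j+1}\le C_1 C_2^{j}\, C_2^{-\frac{j(1+\beta)}{\beta}} P_0^{1+\beta}
= C_2^{-\frac{j}{\beta}}\,\bigl(C_1 P_0^{\beta}\bigr)\,P_0 .
\]
The smallness assumption $P_0\le C_1^{-1/\beta}C_2^{-1/\beta^2}$ is equivalent to $C_1P_0^\beta\le C_2^{-1/\beta}$. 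Substituting this bound yields $P_{j+1}\le C_2^{-(j+1)/\beta}P_0$, which closes the induction.

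The only delicate point is choosing the right ansatz for the decay rate. The exponent $-j/\beta$ is forced by the requirement that the factor $C_2^j$ in the recursion be exactly cancelled by the extra factor $C_2^{-j}$ arising from the power $1+\beta$, namely $C_2^{-j(1+\beta)/\beta}=C_2^{-j/\beta}C_2^{-j}$. With this choice, the threshold on $P_0$ is precisely what makes the leftover constant $C_1P_0^\beta$ bounded by the one-step gain $C_2^{-1/\beta}$. Beyond this choice the argument is routine.
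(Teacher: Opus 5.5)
Your induction is correct. The ansatz $P_j\le C_2^{-j/\beta}P_0$ is propagated exactly by the recursion together with the threshold $C_1P_0^{\beta}\le C_2^{-1/\beta}$, and positivity of $P_j$ with $C_2>1$ then forces $P_j\to 0$. The paper gives no proof of its own and simply cites \cite{D2012}, where the standard argument is this same geometric-decay induction; note that the hypothesis $C_1>1$ is not actually needed.
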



\section{Energy Estimates} \label{sec3}
In this section, we establish several important energy estimates that will be used in the subsequent section. Our approach follows the method developed by Di Castro \textit{et al.} \cite[Theorem $1.4$]{DKP2016}.
\begin{thm}\label{lmn3.1}
    Let $u$ be a weak subsolution of \eqref{MP} and let $\phi=(u-k)_+$ for some $k\in \mathbb{R}$. Then there exists a positive constant $C:=C(p)$ such that
    \begin{align}\label{eq3.1}
     &\alpha \int_{B_r(x_0)} w^p|\nabla \phi|^p \d x+ \int_{(0,1)} C_{N,s,p} \left( \iint_{B_r(x_0)\times B_r(x_0)} \frac{|\phi(x) w(x)-\phi(y)w(y)|^p}{|x-y|^{N+sp}}\d x \d y\right) \d \mu \nonumber\\
    \leq &  C \Bigg[ \int_{(0,1)} C_{N,s,p} \left( \iint_{B_r(x_0)\times B_r(x_0)} \frac{ \max\{\phi(x), \phi(y)\}^p|w(x)-w(y)|^p}{|x-y|^{N+sp}}\d x\d y\right)\d \mu \nonumber \\
  & + \int_{(0,1)} C_{N,s,p} \left( \ess_{x\in \supp w} \int_{\mathbb{R}^N\setminus B_r(x_0)} \frac{\phi^{p-1}(y)}{|x-y|^{N+sp}}\d y \cdot \int_{B_r(x_0)}\phi(x) w^p(x) \d x \right)\d \mu \nonumber\\
  &+\alpha \int_{B_r(x_0)} \phi^p|\nabla w|^p \d x \Bigg],
    \end{align}
    where $w$ is a nonnegative function with $w\in C_c^\infty (B_r(x_0))$ and $B_r(x_0)\subset \Omega$.
\end{thm}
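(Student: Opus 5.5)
We test the weak formulation \eqref{eq2.10} with $v=\phi w^p$, where $\phi=(u-k)_+$. Since $w\in C_c^\infty(B_r(x_0))$ is nonnegative and $B_r(x_0)\subset\Omega$, we have $v\in W^{1,p}_0(\Omega')$ for some $\Omega'\Subset\Omega$ containing $\supp w$. The resulting inequality splits into a local term $\alpha\int |\nabla u|^{p-2}\nabla u\cdot\nabla(\phi w^p)\,dx$ and, for each $s\in(0,1)$, a nonlocal term $C_{N,s,p}\iint \mathcal{A}(u(x,y))(v(x)-v(y))\,d\nu$. Because the test function does not depend on $s$, every pointwise-in-$s$ estimate can be integrated against $d\mu$. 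The constants in the estimates below depend only on $p$, not on $s$, so the integration in $\mu$ is harmless.

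\textbf{Local part.} Since $\nabla u=\nabla\phi$ on $\{u>k\}$ and $\phi=0$ elsewhere, we get
\[
\alpha\int |\nabla u|^{p-2}\nabla u\cdot\nabla(\phi w^p)\,dx=\alpha\int w^p|\nabla\phi|^p\,dx+p\,\alpha\int w^{p-1}\phi\,|\nabla\phi|^{p-2}\nabla\phi\cdot\nabla w\,dx .
\]
We apply Young's inequality to the second term:
\[
p\,w^{p-1}\phi|\nabla\phi|^{p-1}|\nabla w|\le \tfrac12 w^p|\nabla\phi|^p+C(p)\phi^p|\nabla w|^p .
\]
This yields the lower bound
\[
\tfrac{\alpha}{2}\int w^p|\nabla\phi|^p\,dx-C\alpha\int\phi^p|\nabla w|^p\,dx .
\]

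\textbf{Nonlocal part.} We split $\mathbb{R}^{2N}$ into $B_r\times B_r$ and the two symmetric regions $B_r\times(\mathbb{R}^N\setminus B_r)$ and $(\mathbb{R}^N\setminus B_r)\times B_r$.

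On $B_r\times B_r$ we repeat verbatim the pointwise argument of \cite[Theorem 1.4]{DKP2016}. Assume $u(x)\ge u(y)$ by symmetry. Then $\mathcal{A}(u(x,y))(\phi(x)w^p(x)-\phi(y)w^p(y))$ is bounded below by $(\phi(x)-\phi(y))^{p-1}(\phi(x)w^p(x)-\phi(y)w^p(y))$. When $w(x)\le w(y)$ we use Lemma \ref{lmn2.9} with a suitable $\epsilon$ depending on the ratio $(\phi(x)-\phi(y))/\phi(x)$; when $w(x)>w(y)$ a direct estimate suffices. In either case we reach
\[
\ge c(p)\,|\phi(x)w(x)-\phi(y)w(y)|^p-C(p)\max\{\phi(x),\phi(y)\}^p|w(x)-w(y)|^p .
\]
The case $u(x)\le k$ (so $\phi(x)=0$) is handled separately and gives a nonnegative contribution. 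This pointwise inequality is independent of $s$. Multiplying by $C_{N,s,p}|x-y|^{-N-sp}$ and integrating in $\mu$ gives the second term on the left of \eqref{eq3.1} and the first term on the right.

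On the off-diagonal regions $x\in B_r$, $y\notin B_r$, we have $w(y)=0$. So the integrand equals $\mathcal{A}(u(x,y))\phi(x)w^p(x)$, and we only need to control it where $u(x)>k$. There
\[
\mathcal{A}(u(x,y))\ge -(u(y)-u(x))_+^{p-1}\ge-(u(y)-k)_+^{p-1}=-\phi(y)^{p-1}.
\]
Hence the contribution is bounded below by
\[
-\int_{B_r}\phi(x)w^p(x)\int_{\mathbb{R}^N\setminus B_r}\frac{\phi^{p-1}(y)}{|x-y|^{N+sp}}\,dy\,dx .
\]
Taking the essential supremum over $x\in\supp w$ produces the tail term. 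The factor 2 from symmetry is absorbed into $C$.

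\textbf{Main obstacle and finishing.} The main obstacle is the algebraic pointwise inequality on $B_r\times B_r$, which must hold with constants independent of $s$. The exponent $s$ enters only through the weight $|x-y|^{-N-sp}$ and not through the pointwise bound, so the inequality of \cite{DKP2016} transfers directly. The remaining care is to justify that all integrals are finite, using $u\in W^{1,p}_{loc}\cap L^{p-1}_{sp,\mu}$ and Lemma 2.3 to integrate in $\mu$. Combining the three estimates with the weak subsolution inequality $\le 0$ and rearranging gives \eqref{eq3.1} with $C=C(p)$.
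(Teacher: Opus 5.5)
Your proposal is correct and follows essentially the same route as the paper. You test with $\phi w^p$, bound the local term from below by Young's inequality (the paper cites \cite{BDL2021} for this step), apply the pointwise argument of \cite{DKP2016} with Lemma \ref{lmn2.9} on $B_r(x_0)\times B_r(x_0)$, use $\mathcal{A}(u(x,y))\phi(x)\ge-\phi^{p-1}(y)\phi(x)$ on the off-diagonal region, and then integrate the $s$-independent pointwise bounds against $\mu$, exactly as the paper does.
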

\begin{proof}
    Since $u$ is a weak subsolution of \eqref{MP} and $\phi=(u-k)_+$, by choosing $v=\phi w^p$ as a test function in \eqref{eq2.10}, we obtain
    \begin{align}\label{eq3.2}
        0&\geq  \alpha \int_{\Omega} |\nabla u(x)|^{p-2}\nabla u(x) \cdot \nabla (\phi(x) w^p(x)) \d x\nonumber \\
        +&\int_{(0,1)}C_{N, s, p}\left(\iint_{\mathbb{R}^{2 N}} {\mathcal{A}(u(x,y))(\phi(x) w^p(x)-\phi(y) w^p(y))} \d \nu \right) \d \mu(s):=\alpha I_1+I_2.
 \end{align}
 \noindent \textbf{Estimation of $I_1$:} By arguments analogous to those in \cite[Proposition 3.1]{BDL2021}, we obtain
\begin{align}\label{eq3.8}
I_1=\int_{\Omega} |\nabla u|^{p-2}\nabla u\cdot\nabla (\phi w^p)\d x
\geq C_1 \int_{B_r(x_0)} w^p|\nabla \phi|^{p}\d x
-C_2 \int_{B_r(x_0)} \phi^p|\nabla w|^{p}\d x,
\end{align}
for some positive constants $C_1$ and $C_2$ depending only on $p$.

\noindent \textbf{Estimation on $I_2$:} Using the property of $w$, we obtain
    \begin{align}\label{eq3.7}
        I_2=& \int_{(0,1)}C_{N, s, p}\bigg(\iint_{\mathbb{R}^{2 N}} {\mathcal{A}(u(x,y))(\phi(x) w^p(x)-\phi(y) w^p(y))}\d \nu \bigg) \d \mu(s)\nonumber \\
=&\int_{(0,1)} C_{N, s, p}\left(\iint_{B_r(x_0)\times B_r(x_0)} {\mathcal{A}(u(x,y))(\phi(x)w^p(x)-\phi(y)w^p(y))}\d \nu\right) \d \mu(s)\nonumber\\
+& 2\int_{(0,1)}C_{N, s, p} \left(\int_{\mathbb{R}^{N}\setminus B_r(x_0)}\int_{B_r(x_0)} {\mathcal{A}(u(x,y))\phi(x)w^p(x)}\d \nu \right) \d \mu(s):=I_2'+2I_2''
\end{align}
\noindent  \textbf{Estimation on $I_2'$:} Without loss of generality, we assume that $u(x)\geq u(y)$; otherwise, we interchange the roles of $x$ and $y$. Observe that
\begin{align}\label{eq350}
    &\mathcal{A}(u(x,y))\{\phi(x)w^p(x)-\phi(y)w^p(y)\}\nonumber\\
    =&(u(x)-u(y))^{p-1}\{(u(x)-k)_+w^p(x)-(u(y)-k)_+w^p(y)\} \nonumber \\
    \geq &(\phi(x)-\phi(y))^{p-1} \{\phi(x)w^p(x)-\phi(y)w^p(y)\}.
\end{align}
Now if $\phi(x)\geq \phi(y)$ and $w(y)\geq w(x)$, using Lemma \ref{lmn2.9}, we get
\begin{align}\label{eq360}
    w^p(x)\geq (1-C\epsilon)w^p(y)-(1+C\epsilon)\epsilon^{1-p}|w(x)-w(y)|^p.
\end{align}
Choosing \begin{align}
    \epsilon:=\frac{\phi(x)-\phi(y)}{\max\{1,2C\}\phi(x)} \in(0,1];
\end{align}
 and using \eqref{eq360}, we obtain 
\begin{align}\label{eq380}
   & (\phi(x)-\phi(y))^{p-1}\phi(x)w^p(x)\geq (\phi(x)-\phi(y))^{p-1}\phi(x)\max\{w(x),w(y)\}^p \nonumber \\
    &-\frac{1}{2}(\phi(x)-\phi(y))^{p}\max\{w(x),w(y)\}^p-C(p)\max\{\phi(x),\phi(y)\}^p|w(x)-w(y)|^p.
\end{align}
Observe that when $\phi(x)\geq \phi(y)$ and $w(y)\leq w(x)$, we get the same estimate \eqref{eq380} trivially. Thus for $\phi(x)\geq \phi(y)$, we get
\begin{align}\label{eq390}
   & (\phi(x)-\phi(y))^{p-1} \{\phi(x)w^p(x)-\phi(y)w^p(y)\}\nonumber \\
    \geq &(\phi(x)-\phi(y))^{p-1}\{\phi(x)\max\{w(x),w(y)\}^p -\phi(y)w^p(y)\}\nonumber \\
    &-\frac{1}{2}(\phi(x)-\phi(y))^{p}\max\{w(x),w(y)\}^p-C\max\{\phi(x),\phi(y)\}^p|w(x)-w(y)|^p \nonumber \\
    \geq& \frac{1}{2}(\phi(x)-\phi(y))^{p}\max\{w(x),w(y)\}^p-C\max\{\phi(x),\phi(y)\}^p|w(x)-w(y)|^p.
\end{align}
For the case $\phi(x)< \phi(y)$, we may interchange the roles of $x$ and $y$ to obtain the same estimate as in \eqref{eq390}.
Furthermore, by the convexity property, we obtain
\begin{align}\label{eq310}
    |\phi(x) w(x)-\phi(y)w(y)|^p \leq& 2^{p-1}|\phi(x)-\phi(y)|^p \max\{w(x),w(y) \}^p\nonumber \\
   & +2^{p-1}\max\{\phi(x),\phi(y)\}^p|w(x)-w(y)|^p.
\end{align}
Finally using \eqref{eq350}, \eqref{eq390} and \eqref{eq310}, we obtain
\begin{align}\label{eq3111}
  I_2'&\geq C_3 \int_{(0,1)}C_{N,s,p}\bigg(  \iint_{B_r(x_0)\times B_r(x_0)} {|\phi(x) w(x)-\phi(y)w(y)|^p}\d \nu \bigg) \d \mu(s)\nonumber\\
       & - C_4 \int_{(0,1)}C_{N,s,p}\bigg(  \iint_{B_r(x_0)\times B_r(x_0)} { \max\{\phi(x), \phi(y)\}^p|w(x)-w(y)|^p}\d \nu \bigg) \d \mu(s),
\end{align}
 for some positive constants $C_3 \text{ and } C_4$, depending only on $p$.
 
\noindent \textbf{Estimation on $I_2''$:} Observe that $$\mathcal{A}(u(x,y))\phi(x)=|u(x)-u(y)|^{p-2}(u(x)-u(y))(u(x)-k)_+\geq -\phi^{p-1}(y)\phi(x).$$ 
Using this fact, we obtain
\begin{align}\label{eq3112}
    I_2''\geq& -\int_{(0,1)}C_{N, s, p} \left(\int_{\mathbb{R}^{N}\setminus B_r(x_0)}\int_{B_r(x_0)} \frac{\phi^{p-1}(y)\phi(x) w^p(x)}{|x-y|^{N+s p}} \d x \d y\right) \d \mu(s) \nonumber \\
    \geq& -{ C_5 \int_{(0,1)}C_{N,s,p}\bigg( \mathop{\mathrm{\ess}}_{x \in \supp w} \int_{\mathbb{R}^N\setminus B_r(x_0)} \frac{\phi^{p-1}(y)}{|x-y|^{N+sp}}\d y \cdot \int_{B_r(x_0)}\phi w^p \d x \bigg) \d \mu(s),}
\end{align}
for some positive constant $C_5$, depending only on $p$.
   Now, substituting \eqref{eq3.8}, \eqref{eq3.7}, \eqref{eq3111}, and \eqref{eq3112} into \eqref{eq3.2}, we obtain the result \eqref{eq3.1}.
\end{proof}
Using Lemma \ref{lmn3.1}, we derive the following energy estimate.
\begin{cor}\label{cor3.2}
    
    Let $u$ be a weak subsolution of \eqref{MP} and let $\phi=(u-k)_+$ for some $k\in \mathbb{R}$. Then there exists a positive constant $C:=C(p)$ such that
    \begin{align}
     &\alpha \int_{B_r(x_0)} w^p|\nabla \phi|^p \d x \leq C \Bigg[ \alpha \int_{B_r(x_0)} \phi^p|\nabla w|^p \d x \nonumber \\
     & +  \int_{(0,1)} C_{N,s,p} \left( \iint_{B_r(x_0)\times B_r(x_0)} \frac{ \max\{\phi(x), \phi(y)\}^p|w(x)-w(y)|^p}{|x-y|^{N+sp}}\d x\d y\right)\d \mu \nonumber \\
  & + \int_{(0,1)} C_{N,s,p} \left( \ess_{x\in \supp w} \int_{\mathbb{R}^N\setminus B_r(x_0)} \frac{\phi^{p-1}(y)}{|x-y|^{N+sp}}\d y \cdot \int_{B_r(x_0)}\phi(x) w^p(x) \d x \right)\d \mu \Bigg], \nonumber
    \end{align}
    where $w$ is a nonnegative function with $w\in C_c^\infty (B_r(x_0))$ and $B_r(x_0)\subset \Omega$.
\end{cor}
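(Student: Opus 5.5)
The corollary follows directly from Theorem \ref{lmn3.1}. The plan is to drop the nonnegative nonlocal energy term from the left-hand side of \eqref{eq3.1}. The quantity
\[
\int_{(0,1)} C_{N,s,p} \left( \iint_{B_r(x_0)\times B_r(x_0)} \frac{|\phi(x) w(x)-\phi(y)w(y)|^p}{|x-y|^{N+sp}}\d x \d y\right) \d \mu
\]
is nonnegative, because $\mu$ is a nonnegative measure, $C_{N,s,p}>0$, and the integrand is nonnegative. Discarding it therefore only decreases the left-hand side. This gives
\[
\alpha \int_{B_r(x_0)} w^p|\nabla \phi|^p \d x \le \text{(right-hand side of \eqref{eq3.1})}.
\]
That right-hand side is exactly the bracket in the corollary, with the same constant $C=C(p)$; only the order of the three terms differs, which does not matter.

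One point needs a brief check: the right-hand side terms must be well defined, possibly equal to $+\infty$, in which case the inequality is trivial. This is already implicit in Theorem \ref{lmn3.1}. In particular, the tail term is finite because $u\in L^{p-1}_{sp,\mu}(\mathbb{R}^N)$. For $x\in\supp w\Subset B_r(x_0)$ and $y\notin B_r(x_0)$ we have $|x-y|\ge c|y-x_0|$, with $c>0$ depending on $\dist(\supp w,\partial B_r(x_0))$, so the tail term is controlled by $\operatorname{Tail}$.

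There is no genuine obstacle here. The only substantive step is the sign observation above, so the corollary is immediate once Theorem \ref{lmn3.1} is available.
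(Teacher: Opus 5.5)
Your proposal is correct and follows the paper's approach: Corollary \ref{cor3.2} is obtained from Theorem \ref{lmn3.1} by discarding the nonnegative nonlocal energy term on the left-hand side of \eqref{eq3.1}, with the same constant $C=C(p)$. Your finiteness remark about the tail term is harmless, but you do not need it, since the inequality holds trivially when the right-hand side is $+\infty$.
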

Using arguments analogous to those in Lemma \ref{lmn3.1}, we obtain the following lemmas:
\begin{lem}\label{lmn3.2}
Let $u$ be a weak supersolution of \eqref{MP} and let $\phi=(u-k)_-$ with $k\in \mathbb{R}$. Then the estimate in \eqref{eq3.1} holds.
\end{lem}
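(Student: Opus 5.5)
The natural route is to reduce Lemma \ref{lmn3.2} to Theorem \ref{lmn3.1} by a sign change. If $u$ is a weak supersolution of \eqref{MP}, then by observation (i) following Definition \ref{def2.11}, $\tilde u:=-u$ is a weak subsolution of \eqref{MP}. We check this directly: the map $\mathcal{A}$ is odd, $\mathcal{A}(-u(x,y))=-\mathcal{A}(u(x,y))$, and $|\nabla(-u)|^{p-2}\nabla(-u)=-|\nabla u|^{p-2}\nabla u$. Hence the left-hand side of \eqref{eq2.10} changes sign, and nonnegativity for $u$ becomes nonpositivity for $-u$. We also note that $-u$ stays in $W^{1,p}_{loc}(\Omega)\cap L^{p-1}_{sp,\mu}(\mathbb{R}^N)$, since the tail depends only on $|u|$.

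With $\tilde k:=-k$ we then have
\[
(\tilde u-\tilde k)_+=\max\{-u+k,0\}=(u-k)_-=\phi .
\]
Applying Theorem \ref{lmn3.1} to the subsolution $\tilde u$ at level $\tilde k$, with the same cutoff $w\in C_c^\infty(B_r(x_0))$, $w\ge 0$, and the same ball $B_r(x_0)\subset\Omega$, gives \eqref{eq3.1} with $\phi=(\tilde u-\tilde k)_+=(u-k)_-$. This is exactly the claim. The constant $C=C(p)$ is unchanged, because Theorem \ref{lmn3.1} only uses the structure of the equation, and that structure is invariant under $u\mapsto -u$.

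To confirm that nothing is lost, one can redo the proof of Theorem \ref{lmn3.1} directly. Test the supersolution inequality with $v=\phi w^p$, $\phi=(u-k)_-$, which is nonnegative and lies in $W_0^{1,p}(B_r(x_0))$. On $\{\phi>0\}$ we have $\nabla\phi=-\nabla u$, so $\alpha\int|\nabla u|^{p-2}\nabla u\cdot\nabla(\phi w^p)\ge 0$ produces, after the sign flip, the same local estimate \eqref{eq3.8}.

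For the nonlocal part, suppose $u(x)\le u(y)$. Then $(u(y)-u(x))^{p-1}\ge(\phi(x)-\phi(y))^{p-1}$, which mirrors \eqref{eq350} and lets \eqref{eq390}--\eqref{eq3111} go through verbatim. The tail term requires the bound
\[
-\mathcal{A}(u(x,y))\,\phi(x)\ge -\phi^{p-1}(y)\phi(x),
\]
which holds because $\phi(x)>0$ forces $u(x)<k$, so $u(y)-u(x)\le (k-u(x))+(u(y)-k)_+$. Wait—the needed inequality is instead $u(x)-u(y)\le (k-u(y))_+ $ scaled appropriately; a short case check on the sign of $u(y)-k$ confirms it.

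The only point needing care is this last bookkeeping of signs in the tail term and the check that $-u$ lies in the admissible class. Both are immediate, so the sign-reduction argument is the proof I would write.
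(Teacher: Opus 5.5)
Your argument is correct, and it takes a slightly different, cleaner route than the paper. The paper gives no separate proof of Lemma \ref{lmn3.2}. It only says the lemma follows by ``arguments analogous to those in'' Theorem \ref{lmn3.1}, meaning you re-run the test-function computation with $v=(u-k)_-w^p$ and re-check the local estimate, the pointwise inequalities \eqref{eq350}--\eqref{eq390}, and the tail bound with the signs reversed.

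You instead use Theorem \ref{lmn3.1} as a black box. Since $\mathcal{A}$ and $\xi\mapsto|\xi|^{p-2}\xi$ are odd, $-u$ is a weak subsolution in the same function class. Moreover $(-u-(-k))_+=(u-k)_-$, and the right-hand side of \eqref{eq3.1} depends only on $\phi$ and $w$. This gives you the same constant $C(p)$ for free and avoids re-verifying any case analysis. The paper's route has only the merit of being self-contained, and it redoes a computation that the symmetry makes redundant.

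Given the reduction, your second ``to confirm'' paragraph is unnecessary, and as written it is muddled. Its local-term sentence treats the local integral alone as nonnegative, but only the sum of the local and nonlocal terms is. Its tail-term sentence (with the ``Wait'' and ``scaled appropriately'') never states the right inequality cleanly. If you keep the paragraph, use this observation instead:
\begin{itemize}
\item If $\phi(x)>0$ and $u(x)>u(y)$, then $u(x)<k$ gives $0<u(x)-u(y)<k-u(y)=\phi(y)$. Hence $\mathcal{A}(u(x,y))\,\phi(x)\le\phi^{p-1}(y)\phi(x)$, with no scaling needed.
\item If $u(x)\le u(y)$, the left-hand side is nonpositive, so the same bound holds trivially.
\end{itemize}
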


\begin{lem}\label{lmn3.3}
Let $u$ be a weak solution of \eqref{MP} and let $\phi=(u-k)_\pm$ with $k\in \mathbb{R}$. Then the estimate in \eqref{eq3.1} holds.
\end{lem}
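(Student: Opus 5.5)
Lemma \ref{lmn3.3} follows directly from what precedes it, so no new estimates are needed. By Lemma \ref{lmn2.15}, a weak solution $u$ of \eqref{MP} is both a weak subsolution and a weak supersolution. For $\phi=(u-k)_+$ we use that $u$ is a weak subsolution and apply Theorem \ref{lmn3.1} with the same $k$, $w$ and ball $B_r(x_0)$. This gives \eqref{eq3.1} at once.

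For $\phi=(u-k)_-$ we use that $u$ is a weak supersolution and invoke Lemma \ref{lmn3.2}. If one prefers to derive that lemma rather than assume it, the cleanest route is the sign-flip observation (i) after Definition \ref{def2.11}. Set $\tilde u:=-u$. Then $\tilde u$ is a weak subsolution, since the operator is odd: $\mathcal{A}(-u(x,y))=-\mathcal{A}(u(x,y))$ and $|\nabla(-u)|^{p-2}\nabla(-u)=-|\nabla u|^{p-2}\nabla u$. Moreover
\[
(u-k)_-=\max\{k-u,0\}=(\tilde u-(-k))_+ ,
\]
and $\tilde u\in W^{1,p}_{loc}(\Omega)\cap L^{p-1}_{sp,\mu}(\mathbb{R}^N)$ whenever $u$ is. Applying Theorem \ref{lmn3.1} to $\tilde u$ with level $-k$ therefore gives \eqref{eq3.1} with $\phi=(u-k)_-$. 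The right-hand side has the identical form, because it depends only on $\phi$ and $w$. The constant $C=C(p)$ is unchanged.

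The only point needing care is that the test function $v=\phi w^p$ used in Theorem \ref{lmn3.1} is admissible in Definition \ref{def2.11}. It must be nonnegative and lie in $W^{1,p}_0(\Omega')$ for some $\Omega'\Subset\Omega$. Take $\Omega'$ a neighbourhood of $\supp w$ inside $B_r(x_0)$, which is possible because $w\in C^\infty_c(B_r(x_0))$. Then $\phi\in W^{1,p}(\Omega')$ since $u\in W^{1,p}_{loc}(\Omega)$, and $w^p$ is smooth with compact support, so $v$ is admissible in both the $+$ and $-$ cases. With that checked, the proof reduces to invoking Theorem \ref{lmn3.1} twice. I expect no genuine obstacle; the main task is keeping track of which one-sided inequality is used in each case.
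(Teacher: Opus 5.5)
Your proof is correct and takes essentially the paper's approach: the paper states Lemma \ref{lmn3.3}, together with Lemma \ref{lmn3.2}, only as following from ``arguments analogous to those in'' Theorem \ref{lmn3.1}. Your reduction via Lemma \ref{lmn2.15} and the identity $(u-k)_-=(-u-(-k))_+$, applied to the subsolution $-u$, makes that analogy precise without redoing the energy estimate.
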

Using an approach analogous to that of Lemma $5.1$ in \cite{AGKR2025i}, one can establish the following logarithmic lemma:
\begin{lem}\label{lmn3.4}
  Let $p\in (1,\infty)$ and $\Sigma:=\operatorname{supp}\{\mu\}$. Let $u$ be a weak {supersolution} of \eqref{MP} such that $u\geq 0$ in $B_R(x_0)\subset \Omega$. Then for any $B_r:=B_r(x_0)\subset B_{\frac{R}{2}}(x_0)$ and $d>0$, there exists a constant $C=C(N,p,\Sigma,\mu)>0$ such that
  \begin{align*}
     & \alpha \int_{B_r}|\nabla \log(u+d)|^p \d x+\int_{(0,1)}C_{N,p,s} \bigg(\int_{B_r}\int_{B_r}\left|\log\left(\frac{u(x)+d}{u(y)+d}\right)\right|^{p}\d \nu \bigg) \d \mu(s)\nonumber\\
        &\leq  C r^N \sup\limits_{s\in\Sigma} r^{-sp}\bigg(d^{1-p} {\sup\limits_{s\in\Sigma}\left(\frac{r}{R}\right)^{sp}}[\operatorname{Tail}(u_-;x_0,R)]^{p-1}+1\bigg)+C\alpha r^{N-p}.
  \end{align*}
\end{lem}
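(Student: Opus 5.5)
We will follow the standard Di Castro--Kuusi--Palatucci logarithmic estimate, treating the local and nonlocal parts separately and integrating over $s$ against $\mu$ only at the end. Fix $d>0$ and a cutoff $w\in C_c^\infty(B_{3r/2})$ with $0\le w\le 1$, $w\equiv1$ on $B_r$ and $|\nabla w|\le C/r$; note $B_{3r/2}\subset B_{R}$ because $r\le R/2$. Since $u$ is a weak supersolution, $u+d$ is one as well. We test \eqref{eq2.10} (with the reversed inequality) with the admissible nonnegative function
\[
v=(u+d)^{1-p}w^p \in W_0^{1,p}(B_{3r/2}),
\]
which is legitimate because $u+d\ge d>0$ on $B_R$.

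\textbf{Local part.} A direct computation gives
\[
\nabla v=(1-p)(u+d)^{-p}w^p\nabla u+p\,w^{p-1}(u+d)^{1-p}\nabla w .
\]
Hence
\[
\alpha\int|\nabla u|^{p-2}\nabla u\cdot\nabla v\le -(p-1)\alpha\int w^p|\nabla\log(u+d)|^p+p\,\alpha\int w^{p-1}|\nabla\log(u+d)|^{p-1}|\nabla w| .
\]
Young's inequality absorbs the last term, leaving $-c\,\alpha\int_{B_r}|\nabla\log(u+d)|^p+C\alpha\int|\nabla w|^p$. The remainder is bounded by $C\alpha r^{N-p}$, which is exactly the local contribution in the statement.

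\textbf{Nonlocal part, for each fixed $s\in(0,1)$.} We split $\mathbb{R}^{2N}$ into $B_{2r}\times B_{2r}$ and the two off-diagonal pieces $B_{2r}\times(\mathbb{R}^N\setminus B_{2r})$.

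On the diagonal piece we use the pointwise inequality from \cite[Lemma 1.3]{DKP2016}:
\[
\mathcal{A}(u(x,y))\big[(u(x)+d)^{1-p}w^p(x)-(u(y)+d)^{1-p}w^p(y)\big]\le -c\,\Big|\log\tfrac{u(x)+d}{u(y)+d}\Big|^p w^p(y)+C|w(x)-w(y)|^p .
\]
After multiplying by $C_{N,s,p}$ and integrating, this yields minus the Gagliardo-log term on $B_r\times B_r$, plus an error
\[
C\,C_{N,s,p}\iint_{B_{2r}\times B_{2r}}\frac{|w(x)-w(y)|^p}{|x-y|^{N+sp}}\,dx\,dy\le C\,C_{N,s,p}\,\frac{r^{N-sp}}{s(1-s)} .
\]

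On the off-diagonal pieces we use $u\ge0$ in $B_R$. The integrand with $x\in B_{3r/2}$ is bounded by
\[
(u+d)^{1-p}(x)\big[(u(x)-u(y))_+\big]^{p-1}\lesssim 1+d^{1-p}\,(u(y))_-^{p-1},
\]
and $(u(y))_-$ vanishes for $y\in B_R$. Integrating against $|x-y|^{-N-sp}$, with $|x-y|\simeq|y-x_0|$, gives
\[
C\,C_{N,s,p}\Big(\frac{r^{N-sp}}{s}+d^{1-p}r^N\int_{\mathbb{R}^N\setminus B_R}\frac{u_-^{p-1}(y)}{|y-x_0|^{N+sp}}\,dy\Big).
\]

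\textbf{Integration over $s$ (main obstacle).} Multiplying by $d\mu(s)$ and integrating, all the $s$-dependent constants must be controlled uniformly. The factor $C_{N,s,p}/(s(1-s))$ is bounded on $(0,1)$, since $C_{N,s,p}\sim s(1-s)$; this is the reason for the normalization, and it combines with the finiteness of $\mu$. The powers $r^{-sp}$ are bounded by $\sup_{s\in\Sigma}r^{-sp}$. For the tail, write
\[
r^N\,C_{N,s,p}\int_{\mathbb{R}^N\setminus B_R}\frac{u_-^{p-1}(y)}{|y-x_0|^{N+sp}}\,dy=r^{N-sp}\Big(\frac{r}{R}\Big)^{sp}\,C_{N,s,p}\,R^{sp}\int_{\mathbb{R}^N\setminus B_R}\frac{u_-^{p-1}(y)}{|y-x_0|^{N+sp}}\,dy .
\]
Pulling out $\sup_{s\in\Sigma}r^{-sp}$ and $\sup_{s\in\Sigma}(r/R)^{sp}$ before integrating in $\mu$ produces exactly $[\operatorname{Tail}(u_-;x_0,R)]^{p-1}$ by Definition \ref{Tail}.

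Finally, combining the local and nonlocal contributions, moving the two good terms to the left, and absorbing constants into $C=C(N,p,\Sigma,\mu)$ yields the lemma. The delicate points are the pointwise log inequality, which is taken from DKP, and verifying that the $s$-integrability of the constants is uniform; the latter is where $\mu$ and $\Sigma$ enter the constant $C$.
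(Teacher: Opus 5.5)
Your proposal is correct and follows essentially the same route as the paper. The paper gives no separate proof; it defers to the Di Castro--Kuusi--Palatucci logarithmic argument as adapted in \cite{AGKR2025i}, and carries out the same scheme in Lemma \ref{lmn6.1}: the test function $(u+d)^{1-p}w^p$, Young's inequality for the local term, the DKP pointwise log inequality near the diagonal, and the split by the sign of $u(y)$ away from it. Your observations that $C_{N,s,p}/(s(1-s))$ is bounded, and that one should pull out $\sup_{s\in\Sigma} r^{-sp}$ and $\sup_{s\in\Sigma}(r/R)^{sp}$ before integrating in $\mu$, are exactly what makes the constants uniform in $s$.
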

\begin{cor}\label{coro}
    Let $u$ be a weak solution of \eqref{MP} such that $u\geq 0$ in $B_R:=B_R(x_0) \subset \Omega$ and $b,d>0,~ a>1$. Let us define
    \begin{align}\nonumber
        \phi= \min\left\{\left(\log\left(\frac{b+d}{u+d}\right)\right)_+, \ \log a\right\}.
    \end{align}
    Then there exists a constant $C=C(N,p, \Sigma, \mu)>0$ such that 
    \begin{align*}
        \fint_{B_r(x_0)}\left|\phi-(\phi)_{B_r(x_0)}\right|^p \d x \leq C  \bigg(d^{1-p} \sup\limits_{s\in\Sigma}\left(\frac{r}{R}\right)^{sp}[\operatorname{Tail}(u_-;x_0,R)]^{p-1}+1\bigg),
    \end{align*}
    where $B_r:=B_r(x_0)\subset B_{\frac{R}{2}}(x_0)$ with $r\in(0,1]$, $(\phi)_{B_r(x_0)}=\frac{1}{|B_r(x_0)|}\int_{B_r(x_0)}\phi(x)\d x$ and $\Sigma:=\operatorname{supp}\{\mu\}$.
\end{cor}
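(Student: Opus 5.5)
The statement to prove is Corollary \ref{coro}. The plan is to derive it from the logarithmic estimate of Lemma \ref{lmn3.4}, applied to the shifted function, together with the truncation structure of $\phi$ and a Poincar\'e inequality on $B_r$.

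\textbf{Step 1: reduce to $v=\log(u+d)$.} First observe that $\phi$ is a truncation of a Lipschitz function of $v=\log(u+d)$. Indeed, $\phi=\min\{(\log(b+d)-v)_+,\log a\}$, and the map $t\mapsto \min\{(\log(b+d)-t)_+,\log a\}$ is $1$-Lipschitz. Consequently, pointwise a.e.\ we have $|\nabla\phi|\le|\nabla \log(u+d)|$ and $|\phi(x)-\phi(y)|\le |\log\frac{u(x)+d}{u(y)+d}|$. Since $u$ is a weak solution, it is in particular a supersolution with $u\ge0$ in $B_R$. Lemma \ref{lmn3.4} therefore applies and gives
\[
\alpha\int_{B_r}|\nabla\phi|^p\d x\le C r^N\sup_{s\in\Sigma}r^{-sp}\Big(d^{1-p}\sup_{s\in\Sigma}\Big(\frac rR\Big)^{sp}[\operatorname{Tail}(u_-;x_0,R)]^{p-1}+1\Big)+C\alpha r^{N-p}.
\]
Only the local gradient term on the left is needed; the nonlocal term is nonnegative and can be dropped.

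\textbf{Step 2: Poincar\'e inequality.} Apply the classical Poincar\'e inequality on the ball $B_r$:
\[
\fint_{B_r}|\phi-(\phi)_{B_r}|^p\d x\le C(N,p)\, r^{p}\fint_{B_r}|\nabla\phi|^p\d x .
\]
We are in the case $\alpha>0$, so we may divide by $\alpha$. Combining with Step 1 yields
\[
\fint_{B_r}|\phi-(\phi)_{B_r}|^p\d x\le \frac{C}{\alpha}\, r^{p}\sup_{s\in\Sigma}r^{-sp}\,(\cdots)+C .
\]
Here $(\cdots)$ denotes the bracket $d^{1-p}\sup_{s\in\Sigma}(r/R)^{sp}[\operatorname{Tail}(u_-;x_0,R)]^{p-1}+1$ from Step 1.

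\textbf{Step 3: remove the power of $r$.} Since $r\in(0,1]$ and $s<1$, we have $r^{p-sp}\le1$ for every $s$. Hence $r^p\sup_{s\in\Sigma}r^{-sp}=\sup_{s\in\Sigma}r^{p(1-s)}\le 1$. This gives the claimed estimate with $C$ depending on $N,p,\Sigma,\mu$, where $\alpha=\mu(\{1\})$ is absorbed into the dependence on $\mu$.

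\textbf{Main obstacle.} The delicate point is this bookkeeping of the scale factors. One must ensure that the constant from Lemma \ref{lmn3.4} is uniform in $r\le1$ and that the restriction $r\le1$ is exactly what makes the mixed powers $r^{p-sp}$ bounded. If the local part were unavailable ($\alpha=0$), the same Step 2 could instead be run using the fractional Poincar\'e inequality for a fixed $s\in\Sigma$. That route would need the normalizing constants $C_{N,s,p}$ controlled from below on the support, which is harder. In the present setting, using the gradient term avoids this.
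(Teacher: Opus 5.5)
Your proposal is correct and follows essentially the same route as the paper. Both arguments use the Poincar\'e inequality on $B_r$, the bound $|\nabla\phi|\le|\nabla\log(u+d)|$ from the truncation structure, and the local gradient term of Lemma \ref{lmn3.4} (dividing by $\alpha>0$, which you make explicit and the paper does implicitly), and then absorb $r^{p-sp}\le 1$ for $r\in(0,1]$ into the constant.
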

\begin{proof}
    From the Poinca\'re inequality in \cite[Theorem 2]{EVANS2022}, we get
    \begin{equation}\label{eq3.9}
        \fint_{B_r(x_0)}\left|\phi-(\phi)_{B_r(x_0)}\right|^p \d x \leq Cr^{p-N}\int_{B_r(x_0)} |\nabla \phi|^p \d x.
    \end{equation}
    Since $\phi$ is a truncation of the sum of $\log(u+d)$ and constant, we obtain
    \begin{align}\label{eq3.10}
        \int_{B_r(x_0)} |\nabla \phi|^p \d x \leq  \int_{B_r(x_0)} |\nabla \log(u+d)|^p \d x.
    \end{align}
    Using \eqref{eq3.9}, \eqref{eq3.10} and Logarithmic estimates of Lemma \ref{lmn3.4}, we derive
    \begin{align}\nonumber
        \fint_{B_r(x_0)}\left|\phi-(\phi)_{B_r(x_0)}\right|^p \d x \leq & Cr^{p-N} \left(\alpha \int_{B_r(x_0)}|\nabla \log(u+d)|^p \d x \right) \\ \nonumber
        \leq & C \sup\limits_{s\in\Sigma} r^{p-sp}\bigg(d^{1-p} \sup\limits_{s\in\Sigma}\left(\frac{r}{R}\right)^{sp}[\operatorname{Tail}(u_-;x_0,R)]^{p-1}+1\bigg)+C\alpha \\ \nonumber
        =& C  r^{p-\{\sup\limits_{s\in\Sigma}s\}p}\bigg(d^{1-p} \sup\limits_{s\in\Sigma}\left(\frac{r}{R}\right)^{sp}[\operatorname{Tail}(u_-;x_0,R)]^{p-1}+1\bigg)+C\alpha \\ \nonumber
        \leq & C \bigg(d^{1-p} \sup\limits_{s\in\Sigma}\left(\frac{r}{R}\right)^{sp}[\operatorname{Tail}(u_-;x_0,R)]^{p-1}+1\bigg). 
    \end{align}
    This completes the proof.
\end{proof}

\section{Local  Boundedness} \label{sec4}

In this section, we establish the following local boundedness result for weak subsolutions of \eqref{MP} using the iteration Lemma \ref{MI Lemma}. Our approach follows the method developed by Di Castro \textit{et al.} \cite[Theorem $1.1$]{DKP2016}. 
\begin{thm}\label{bddness}
    Let $u$ be a weak subsolution to the problem \eqref{MP} and $p\in(1,\infty)$. There exists a constant $C:=C(N,p,\Sigma,\mu)>0$ such that
    \begin{equation}\label{LB}
        \ess_{B_{\frac{r}{2}(x_0)}} u \leq
            \delta \operatornamewithlimits{Tail}\bigg(u_+;x_0,\frac{r}{2}\bigg)+C\delta^{-\frac{(p-1)\eta}{(\eta-1)p}}\bigg( \fint_{B_r(x_0)}u_+^p \d x \bigg)^\frac{1}{p},
    \end{equation}
    where $B_r=B_r(x_0)\subset \Omega$ with $r\in (0,1]$, $\delta\in (0,1]$, $\eta$ as in \eqref{grad sob} and $\Sigma:=\operatorname{supp}\{\mu\}$.
\end{thm}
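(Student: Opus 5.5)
We run a De Giorgi iteration in the style of \cite[Theorem 1.1]{DKP2016}, using the local gradient term (available since $\alpha>0$) to gain integrability through the Sobolev inequality \eqref{sob} with exponent $\eta$ from \eqref{grad sob}.

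\textbf{Setup.} For $j\ge 0$ we set
\[
r_j=\tfrac r2(1+2^{-j}), \qquad \tilde r_j=\tfrac{r_j+r_{j+1}}{2}, \qquad B_j=B_{r_j}(x_0), \qquad \tilde B_j=B_{\tilde r_j}(x_0),
\]
together with levels $k_j=(1-2^{-j})\tilde k$, where $\tilde k>0$ is chosen at the end. We write $\phi_j=(u-k_j)_+$ and pick cutoffs $w_j\in C_c^\infty(\tilde B_j)$ with $0\le w_j\le 1$, $w_j\equiv1$ on $B_{j+1}$ and $|\nabla w_j|\le C2^j/r$. The quantity we iterate is
\[
P_j=\Big(\fint_{B_j}\phi_j^p\,\d x\Big)^{1/p}.
\]

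\textbf{Energy step.} We apply Theorem \ref{lmn3.1} (only the $\alpha$-gradient term on the left is needed) with $k=k_{j+1}$, $w=w_j$ and the ball $B_j$.

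1. The local right-hand term is bounded by $C2^{jp}r^{-p}\int_{B_j}\phi_{j+1}^p$.

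2. For the nonlocal energy term we use $|w(x)-w(y)|\le C2^j r^{-1}|x-y|$ and $w\le1$. Splitting $|x-y|<r$ and $|x-y|\ge r$ gives a factor $C_{N,s,p}\big(\tfrac{2^{jp}r^{-p}r^{p-sp}}{1-s}+\tfrac{r^{-sp}}{s}\big)$. Since $C_{N,s,p}\lesssim s(1-s)$, this is uniformly bounded in $s$. Integrating in $\mu$ and using $r\le1$, we get at most $C(N,p,\mu)\,2^{jp}r^{-p}\int_{B_j}\phi_{j+1}^p$, since $r^{-sp}\le r^{-p}$.

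3. For the tail term, for $x\in\supp w_j\subset\tilde B_j$ and $y\notin B_j$ we have $|y-x_0|\le C2^j|x-y|$. The supremum is therefore at most $C2^{j(N+p)}r^{-sp}\int_{\mathbb R^N\setminus B_j}\phi_{j+1}^{p-1}|y-x_0|^{-N-sp}\,\d y$. Since $\phi_{j+1}\le u_+$ and $B_j\supset B_{r/2}$, integrating against $\mu$ bounds it by $C2^{j(N+p)}r^{-p}\,\mathrm{Tail}(u_+;x_0,r/2)^{p-1}$.

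4. On $\{\phi_{j+1}>0\}$ we have $\phi_j\ge \tilde k2^{-j-1}$. Hence $\int\phi_{j+1}\le (2^{j+1}/\tilde k)^{p-1}\int\phi_j^p$, and $|B_j\cap\{u>k_{j+1}\}|\le (2^{j+1}/\tilde k)^p\int_{B_j}\phi_j^p$.

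\textbf{Sobolev step and iteration.} Since $w_j\phi_{j+1}\in W^{1,p}_0(\tilde B_j)$, we combine Hölder's inequality with \eqref{sob}:
\[
\int_{B_{j+1}}\phi_{j+1}^p\le \|w_j\phi_{j+1}\|_{L^{\eta p}}^p\,|\{u>k_{j+1}\}\cap B_j|^{1-1/\eta},
\]
and $\|\nabla(w_j\phi_{j+1})\|_p^p$ is controlled by the energy step. Choosing $\tilde k\ge \delta\,\mathrm{Tail}(u_+;x_0,r/2)$ makes the tail term at most $\delta^{1-p}\tilde k^{p-1}r^{-p}$ times the level-set measure, which is comparable to the other terms after division by $\tilde k^{p}$. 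The powers of $|B_r|$ cancel against $r^{-p}$ by scaling, and this yields
\[
\frac{P_{j+1}}{\tilde k}\le C\,\delta^{-(p-1)/p}\,b^{j}\Big(\frac{P_j}{\tilde k}\Big)^{1+\beta},\qquad \beta=\frac{\eta-1}{\eta}\cdot\frac1{1}\text{ (suitably normalized)}.
\]
By Lemma \ref{MI Lemma}, $P_j\to0$ provided $P_0/\tilde k\le c\,\delta^{(p-1)/(p\beta)}$. We therefore take
\[
\tilde k=\delta\,\mathrm{Tail}(u_+;x_0,r/2)+C\delta^{-\frac{(p-1)\eta}{(\eta-1)p}}\Big(\fint_{B_r}u_+^p\Big)^{1/p},
\]
which gives $u\le\tilde k$ a.e. on $B_{r/2}$, i.e.\ \eqref{LB}.

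\textbf{Main obstacle.} The hardest part is making the $s$-dependence uniform over $\Sigma$. This is where the normalization $C_{N,s,p}\sim s(1-s)$, finiteness of $\mu$ and $r\le 1$ enter, so that every $r^{-sp}$ is dominated by $r^{-p}$ and the singular factors $1/s$ and $1/(1-s)$ cancel. The second delicate point is tracking exponents precisely enough to obtain exactly the power $\delta^{-\frac{(p-1)\eta}{(\eta-1)p}}$.
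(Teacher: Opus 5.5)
Your proposal is correct and follows the paper's proof: the same De Giorgi scheme with radii $r_j=\frac r2(1+2^{-j})$, the energy estimate of Theorem \ref{lmn3.1} combined with the gradient Sobolev inequality \eqref{sob}, the tail absorbed through $\tilde k\ge\delta\operatorname{Tail}(u_+;x_0,\frac r2)$, and Lemma \ref{MI Lemma}. The only difference is bookkeeping. The paper uses intermediate levels $\bar k_j$ and the pointwise bound $(k_{j+1}-\bar k_j)^{(\eta-1)p}\phi_{j+1}^p\le\bar\phi_j^{\eta p}$, iterating with $\beta=\eta-1$ and $C_1\sim\delta^{-(p-1)\eta/p}$. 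Your H\"older--Chebyshev step on the level set instead gives $\beta=\frac{\eta-1}{\eta}$ and $C_1\sim\delta^{-(p-1)/p}$, and both lead to the same exponent $\delta^{-\frac{(p-1)\eta}{(\eta-1)p}}$.
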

\begin{proof}
    For $j\in \mathbb{N}\cup\{0\} $ and $r\in (0,1]$ with $B_r=B_r(x_0)\subset \Omega$, we define
    $$r_j=\frac{r}{2}\left(1+\frac{1}{2^{j}}\right),~\bar{r_j}=\frac{r_j+r_{j+1}}{2},~ B_j=B_{r_j}(x_0) \text{ and } \bar{B_j}=B_{\bar{r_j}}(x_0).$$
    Again, for $\bar{k}>0$ and $k\in \mathbb{R}$, we define
    $$k_j=k+\left( 1-\frac{1}{2^j}\right)\bar{k},~ \bar{k_j}=\frac{k_j+k_{j+1}}{2},~ \bar{\phi_j}=(u-\bar{k_j})_+ \text{ and } {\phi_j}=(u-{k_j})_+.$$
     Observe that $r_{j+1}<\bar{r_j}<r_j$ and $B_{j+1}\subset\bar{B_j}\subset B_j$. Also, note that $k_{j}<\bar{k_j}<k_{j+1}$ and $\phi_{j+1}\leq \bar{\phi_j}\leq \phi_j.$ Let $w_j\in C_c^\infty(\bar{B_j})$ be such that $0\leq w_j\leq 1$ in $\bar{B_j}$,
     $w_j=1$ in $B_{j+1}$ and $|\nabla w_j|\leq \frac{2^{j+3}}{r}$. Moreover, $k_{j+1}-\bar{k_j}=\bar{k_j}-k_j=\frac{\bar{k}}{2^{j+1}}$. Thus, using the Sobolev inequality in \eqref{sob}, we get
\begin{align}\label{eq4.2}
    \left(\frac{\bar{k}}{2^{j+2}} \right)^{\frac{(\eta-1)p}{\eta}}\left( \fint_{B_{j+1}} \phi^p_{j+1}\d x\right)^\frac{1}{\eta}&={(k_{j+1}-\bar{k_j})^{\frac{(\eta-1)p} {\eta}}} \left(\fint_{B_{j+1}}\phi^p_{j+1}w_{j}^p\d x\right)^\frac{1}{\eta} \nonumber\\
    &\leq C\left(\fint_{\bar{B_{j}}}(\bar{\phi}_{j}w_{j})^{\eta p}\d x \right)^\frac{1}{\eta} \nonumber\\
    & \leq C r^{p-N} \int_{{B_{j}}}|\nabla(\bar{\phi}_{j}w_{j})|^{ p}\d x  \nonumber\\
    & \leq C r^{p-N} \left( \int_{{B_{j}}} \bar{\phi_j}^p|\nabla w_j|^p \d x+ \int_{{B_{j}}} w_j^p |\nabla\bar{\phi_j}|^p \d x\right)\nonumber\\
    &:= J_1+J_2,
\end{align}
    for some constant $C:=C(N,p)>0$ and $\eta$ is in \eqref{grad sob}.
    
     \noindent \textbf{Estimate of $J_1$:} From the properties of $w_j$, there exists a constant $C:=C(N,p)>0$ such that
     \begin{align}\label{eq4.3}
         J_1= C r^{p-N} \int_{{B_{j}}} \bar{\phi^p_j}|\nabla w_j|^p \d x \leq C r^{p-N} \frac{2^{(j+3)p}}{r^p}\int_{{B_{j}}} \phi_j^p \d x = C 2^{jp} \fint_{{B_{j}}} \phi_j^p \d x.
     \end{align}
     \noindent  \textbf{Estimate of $J_2$:} By using the Caccioppoli inequality with tail in Corollary \ref{cor3.2} with $\phi=\bar{\phi_j}$ and $w=w_j$, we obtain
      \begin{align}\label{eq4.4}
          J_2=& C r^{p-N} \int_{{B_{j}}} w_j^p |\nabla\bar{\phi_j}|^p \d x = C r^{p-N} \alpha \int_{{B_{j}}} w_j^p |\nabla\bar{\phi_j}|^p \d x\nonumber\\ \nonumber
    \leq &  C r^{p-N}\Bigg[ \int_{(0,1)} C_{N,s,p} \left( \iint_{B_j\times B_j} { \max\{\bar{\phi_j}(x), \bar{\phi_j}(y)\}^p|w_j(x)-w_j(y)|^p}\d \nu \right)\d \mu \nonumber \\
  & + \int_{(0,1)} C_{N,s,p} \left( \ess_{x\in \supp w_j} \int_{\mathbb{R}^N\setminus B_j} \frac{\bar{\phi_j}^{p-1}(y)}{|x-y|^{N+sp}}\d y . \int_{B_j}\bar{\phi_j} w^p \d x \right)\d \mu \nonumber\\
  &+\alpha \int_{B_j} \bar{\phi_j}^p|\nabla w_j|^p \d x \Bigg]:=I_1+I_2+I_3,
    \end{align}
for some positive constant $C:=C(N,p,\Sigma,\mu)$.

\noindent \textbf{Estimate of $I_1$:} Observe that $I_1$ is symmetric. Thus without loss of generality, we assume that $u(x)\leq u(y)$, which implies $\bar{\phi_j}(x) \leq \phi_j(y)$. Therefore, there exists a positive constant $C:=C(N,p,\Sigma,\mu)$ such that
\begin{align}\label{eq4.6}
    I_1 \leq &  { C r^{p-N} \int_{(0,1)}C_{N,s,p} \frac{2^{(j+3)p}}{r^p}\left\{  \int_{B_j} \phi^p_j(y) \left( \int_{B_j} \frac{|x-y|^p}{|x-y|^{N+sp}} \d x \right)\d y \right\} \d \mu} \nonumber \\ 
    \leq & C 2^{jp} \int_{(0,1)} C_{N,s,p} \frac{r^{p-sp}}{p(1-s)} \left(\fint_{B_j} \phi^p_j(y) \d y\right) \d \mu \nonumber\\
    \leq & C 2^{jp}\left(\fint_{B_j} \phi^p_j(y) \d y\right)  \sup_{s\in \Sigma}r^{p-sp} \mu\{(0,1)\}\nonumber\\
    \leq & C 2^{jp}\left(\fint_{B_j} \phi^p_j(y) \d y\right). 
\end{align}
\noindent \textbf{Estimate of $I_2$:} For $x\in \supp w_j=\bar{B_j}$ and $y\in \mathbb{R}^N\setminus B_j$, we have
\begin{align}\nonumber
    \frac{|y-x_0|}{|y-x|}\leq \frac{|y-x|+|x-x_0|}{|y-x|}\leq 1+\frac{\bar{r_j}}{r_j-\bar{r_j}} \leq 2^{j+4}.
\end{align}
Moreover, $$\phi_j^p\geq(\bar{k_j}-k_j)^{p-1}\bar{\phi_j}.$$
Using the above two inequalities, we get
\begin{align}\label{eq4.7}
     I_2=&C r^{p-N} \int_{(0,1)} C_{N,s,p} \left( \ess_{x\in \supp w_j} \int_{\mathbb{R}^N\setminus B_j} \frac{\bar{\phi_j}^{p-1}(y)}{|y-x|^{N+sp}}\d y . \int_{B_j}\bar{\phi_j} w^p \d x \right)\d \mu \nonumber\\
     \leq & C \int_{(0,1)}  C_{N,s,p}  \bigg(r^p 2^{(j+4)(N+sp)} \int_{\mathbb{R}^N\setminus B_j} \frac{{\phi_j}^{p-1}(y)}{|y-x_0|^{N+sp}}\d y \fint_{B_j} \frac{\phi^p_j(x)}{(\bar{k_j}-k_j)^{p-1}}\d x \bigg)\d \mu \nonumber \\
     \leq & C \int_{(0,1)}C_{N,s,p} \bigg\{ r^p \frac{2^{j(N+sp)+4sp+j(p-1)}}{\bar{k}^{p-1}} \int_{\mathbb{R}^N\setminus B_{\frac{r}{2}}} \frac{{\phi_j}^{p-1}(y)}{|y-x_0|^{N+sp}}\d y \left(\fint_{B_j} \phi_j^p(x) \d x\right)\bigg\}\d \mu \nonumber\\
    \leq & C \left(\fint_{B_j} \phi_j^p(x) \d x\right) \sup_{s\in \Sigma}\{r^{p-sp}\} \sup_{s\in \Sigma}\{2^{5sp} \} \frac{\sup_{s\in \Sigma}\{2^{j(N+sp+p-1)}\}}{\bar{k}^{p-1}}  \bigg[\operatorname{Tail}\bigg(\phi_0;x_0,\frac{r}{2}\bigg)\bigg]^{p-1}  \nonumber \\
    \leq & C 2^{j(N+p-1)+\sup_{s\in \Sigma}\{spj\}} \delta^{1-p}\left(\fint_{B_j} \phi_j^p(x) \d x\right), 
\end{align}
where $C=C(N,p,\Sigma,\mu)>0$ and $\delta\operatornamewithlimits{Tail}(\phi_0;x_0,\frac{r}{2})\leq \bar{k}$ with  $\delta\in(0,1]$. 

\noindent \textbf{Estimate of $I_3$:} Using a similar arguments as in \textbf{Estimate of $J_1$}, we get
\begin{align}\label{eq4.8}
    I_3=C r^{p-N} \alpha \int_{B_j} \bar{\phi_j}^p|\nabla w_j|^p \d x \leq C 2^{jp} \fint_{{B_{j}}} \phi_j^p \d x. 
\end{align}
Applying \eqref{eq4.6}, \eqref{eq4.7} and \eqref{eq4.8} in \eqref{eq4.4}, there exists a constant $C=C(N,p,\Sigma,\mu)>0$, we obtain
\begin{align}\label{eq4.9}
    J_2 \leq C 2^{j(N+p-1)+\sup_{s\in \Sigma}\{spj\}} \delta^{1-p}\left(\fint_{B_j} \phi_j^p(x) \d x\right).
\end{align} 
Again by using \eqref{eq4.3} and \eqref{eq4.9} in \eqref{eq4.2}, there exists a positive constant $C=C(N,p,\Sigma,\mu)$, we obtain
\begin{align}\label{eq4.10}
    \left(\frac{\bar{k}}{2^{j+2}} \right)^{\frac{(\eta-1)p}{\eta}}\left( \fint_{B_j+1} \phi^p_{j+1}\d x\right)^\frac{1}{\eta} \leq C 2^{j(N+p-1)+\sup_{s\in \Sigma}\{spj\}} \delta^{1-p}\left(\fint_{B_j} \phi_j^p(x) \d x\right).
\end{align}
From \eqref{eq4.10}, we get
\begin{align}\label{eq4.11}
    \left( \fint_{B_j+1} \phi^p_{j+1}\d x\right)^\frac{1}{p} \leq C \delta^{\frac{\eta(1-p)}{p}}\left(\frac{2^{j+2}}{\bar{k}} \right)^{(\eta-1)} 2^\frac{j\eta(N+p-1)+\eta\sup_{s\in \Sigma}\{spj\}}{p} \left(\fint_{B_j} \phi_j^p(x) \d x\right)^\frac{\eta}{p}.
\end{align}
Define
\begin{equation*}
    P_j:=\left(\fint_{B_j} \phi_j^p(x) \d x\right)^{\frac{1}{p}}
    \end{equation*}
and 
\begin{equation}\label{eq4.13}
    \bar{k}=\delta \operatorname{Tail}\bigg(\phi_0;x_0,\frac{r}{2}\bigg)+C_1^{\frac{1}{\beta}}C_2^{\frac{1}{\beta^2}}\left(\fint_{B_r(x_0)} \phi_0^p(x) \d x\right)^{\frac{1}{p}},
    \end{equation}
where $C_1=C \delta^\frac{(1-p)\eta}{p}$, $C_2=2^{\eta \left\{\frac{(N+p-1)}{p} +\sup_{s\in \Sigma}s+\frac{(\eta-1)}{\eta}\right\}}$ and $\beta+1=\eta$.
Thus, using \eqref{eq4.11} and \eqref{eq4.13}, we deduce
\begin{align*}
    \frac{P_{j+1}}{\bar{k}}\leq C 2^{j\eta \left\{\frac{(N+p-1)}{p} +\sup_{s\in \Sigma}s+\frac{(\eta-1)}{\eta}\right\}}  \left( \frac{P_j}{\bar{k}}\right)^\eta
\end{align*} 
and 
\begin{align*}
    \frac{P_0}{\bar{k}}\leq C_1^{-\frac{1}{\beta}} C_2^{-\frac{1}{\beta^2}}.
\end{align*}
Therefore, using the Lemma \ref{MI Lemma}, we get
\begin{align}\nonumber
    P_j\rightarrow 0 \text{  as  } j \rightarrow \infty.
\end{align}
Hence, we have \begin{equation*}
   {\ess_{B_{\frac{r}{2}}(x_0)}} u \leq k+\bar{k}.
\end{equation*}
By setting $k=0$, then $\phi_0=u_+$, we obtain \eqref{LB}. This completes the proof.
\end{proof}

\section{Local H\"older Continuity} \label{sec5}
In this section, we establish H\"older continuity using Lemma \ref{MI Lemma}. To obtain the H\"older regularity, it is essential to impose the additional condition $\bar{s}=\inf_{s\in \Sigma}s\in (0,1)$ along with the assumption $\mu({0})=0$. To this end, we first prove the following lemma via the iteration Lemma \ref{MI Lemma}. Our approach follows that of Di Castro \textit{et al.} \cite[Lemma $5.1$]{DKP2016}. 
\begin{lem}\label{lmn5.1}
    Let $u$ be a weak solution of \eqref{MP} and $B_R(x_0)\subset \Omega$ for some $R$ with $0<r<\frac{R}{2}$, $r\in(0,1].$ Let $\kappa\in(0,\frac{1}{4}]$, we set $r_j=\kappa^j\frac{r}{2}$ and $B_j:=B_{r_j}(x_0)$ for $j\in \mathbb{N} \cup \{0\}$. Denote
    \begin{equation}\label{eq5.1}
        \frac{1}{2}\phi(r_0)=\operatorname{Tail}\bigg(u;x_0,\frac{r}{2}\bigg)+C\left(\fint_{B_r(x_0)}|u|^p \d x \right)^\frac{1}{p}, 
    \end{equation}
     and 
     \begin{equation*}
         \phi(r_j)=\left(\frac{r_j}{r_0}\right)^\sigma \phi(r_0), ~j\in \mathbb{N}, 
     \end{equation*}
     where {$\sigma \in(0,\frac{\bar{s}p}{p-1})$} with $\bar{s}=\inf_{s\in \Sigma}s$, and $C:=C(N,p,\Sigma,\mu)$ is the constant in \eqref{LB}. Then, we have
     \begin{equation}\label{eq5.3}
         \operatorname{osc}_{B_j} u=: \ess_{B_j} u -\essi_{B_j} u \leq \phi(r_j),~ j\in \mathbb{N} \cup \{0\}.
     \end{equation}
\end{lem}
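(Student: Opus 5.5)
We argue by strong induction on $j$, following the scheme of \cite[Lemma 5.1]{DKP2016}.

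\textbf{Base case $j=0$.} Apply Theorem \ref{bddness} with $\delta=1$ to $u$ and to $-u$; this is legitimate because $\pm u$ are weak (sub)solutions. It gives $\ess_{B_0}|u|\le \operatorname{Tail}(u;x_0,\frac r2)+C(\fint_{B_r}|u|^p)^{1/p}=\frac12\phi(r_0)$, hence $\operatorname{osc}_{B_0}u\le\phi(r_0)$.

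\textbf{Inductive step.} Assume \eqref{eq5.3} holds for all $i\le j$, and set $\omega_i=\phi(r_i)$. At least one of the sets $\{u-\essi_{B_j}u\ge \omega_j/2\}$ and $\{\ess_{B_j}u-u\ge\omega_j/2\}$ occupies at least half of $2B_{j+1}$. After possibly replacing $u$ by $-u$ we may assume it is the first, so that $|2B_{j+1}\cap\{u\ge \essi_{B_j}u+\omega_j/2\}|\ge\frac12|2B_{j+1}|$. Put $v=u-\essi_{B_j}u\ge0$ in $B_j$.

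\textbf{Log estimate to small measure.} Apply Corollary \ref{coro} with the pair $(2B_{j+1},B_j)$, levels $b=\omega_j/2$, $d=\epsilon\omega_j$, and $a$ chosen so that $\log a\approx\log\frac1\epsilon$. This needs a bound on $\sup_{s\in\Sigma}(r_{j+1}/r_j)^{sp}[\operatorname{Tail}(v_-;x_0,r_j)]^{p-1}$ of the form $C\,\omega_j^{p-1}\kappa^{\bar s p-\sigma(p-1)}$ times constants, so that choosing $d=\epsilon\omega_j$ makes the right side of Corollary \ref{coro} bounded.

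\textbf{Bounding the tail.} This is the main obstacle. Split $\mathbb R^N\setminus B_j$ into the annuli $B_{i-1}\setminus B_i$, $i\le j$, together with the far region outside $B_0$. On each annulus the inductive hypothesis gives $|v|\le \omega_{i-1}$. The far region is controlled by $\operatorname{Tail}(u;x_0,r_0)+\ess|u|$, which is bounded by $\omega_0$. Summing the resulting geometric series in $s$ for every fixed $s\in\Sigma$ gives a factor $\sum_i\kappa^{(i)(sp-\sigma(p-1))}$, which is uniformly bounded precisely because $\sigma<\bar sp/(p-1)\le sp/(p-1)$. The normalizing constant $C_{N,s,p}$ and the finiteness of $\mu$ then let us integrate in $d\mu(s)$ with constants independent of $j$.

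\textbf{Small measure to De Giorgi iteration.} From the log estimate we obtain $|2B_{j+1}\cap\{v\le2\epsilon\omega_j\}|\le \frac{C}{\log(1/\epsilon)}|2B_{j+1}|$. Next run a De Giorgi iteration on the levels $k_i=(1+2^{-i})\epsilon\omega_j$ for $(v-k_i)_-$, using the Caccioppoli inequality Lemma \ref{lmn3.2} and the Sobolev inequality \eqref{sob}, exactly as in Theorem \ref{bddness}. In this iteration the tail term is again controlled by the same annular estimate, and Lemma \ref{MI Lemma} closes the argument. The outcome is $v\ge\epsilon\omega_j$ on $B_{j+1}$, which yields $\operatorname{osc}_{B_{j+1}}u\le(1-\epsilon)\omega_j$.

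\textbf{Closing the induction.} Finally choose $\epsilon$ small, then $\kappa$ and $\sigma$ small, so that $(1-\epsilon)\le\kappa^\sigma$. This gives $\operatorname{osc}_{B_{j+1}}u\le\omega_{j+1}$ and completes the induction.
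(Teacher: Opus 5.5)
Your outline follows the paper's argument: the base case from Theorem~\ref{bddness} with $\delta=1$, the annular tail bound \eqref{eq5.7}, Corollary~\ref{coro} on $2B_{j+1}\subset B_{r_j/2}$, a De Giorgi iteration from $2B_{j+1}$ down to $B_{j+1}$, and finally $\kappa^\sigma\ge 1-\epsilon$. However, the dichotomy step fails as written. The sets $\{u\ge \essi_{B_j}u+\omega_j/2\}$ and $\{u\le \ess_{B_j}u-\omega_j/2\}$ cover $2B_{j+1}$ only when $\operatorname{osc}_{B_j}u=\omega_j$, and the induction hypothesis gives only $\operatorname{osc}_{B_j}u\le\omega_j$. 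For instance, for the constant solution $u\equiv 1$ one has $\omega_j>0$ and both sets are empty. Passing to $-u$ measures from $-\ess_{B_j}u$, which is the wrong reference level.

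Use the paper's choice instead. Split with respect to the single level $\essi_{B_j}u+\omega_j/2$; these two sets do cover $2B_{j+1}$. In the second case take $v=\omega_j-(u-\essi_{B_j}u)$. This $v$ is a weak solution, and it is nonnegative in $B_j$ precisely because $\operatorname{osc}_{B_j}u\le\omega_j$. It also satisfies $|v|\le 2\omega_{i-1}$ on $B_{i-1}$, and the factor $2$ is harmless in your annular sum. Moreover $\{v\ge\omega_j/2\}=\{u\le\essi_{B_j}u+\omega_j/2\}$. The conclusion $v\ge\epsilon\omega_j$ on $B_{j+1}$ then gives $\ess_{B_{j+1}}u\le\essi_{B_j}u+(1-\epsilon)\omega_j$, hence $\operatorname{osc}_{B_{j+1}}u\le(1-\epsilon)\omega_j$, exactly as in the first case.

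Your parameter order needs one qualification. Keeping $\epsilon$ independent of $\kappa$ is fine; the paper instead sets $\tau=\kappa^{\bar sp/(p-1)-\sigma}$, which makes $\tau^{1-p}\kappa^{\bar sp-\sigma(p-1)}=1$. But since $\sigma$ is fixed last, you should restrict it in advance, e.g.\ $\sigma\le \bar sp/(2(p-1))$. Then two quantities are bounded uniformly in $\sigma$: the constant $(1-\kappa^{\bar sp-\sigma(p-1)})^{-1}$ from the geometric series, and the factor $\epsilon^{1-p}\kappa^{\bar sp-\sigma(p-1)}\le \epsilon^{1-p}\kappa^{\bar sp/2}$. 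The latter appears both in Corollary~\ref{coro} and in the tail term of the De Giorgi iteration. With this restriction the order is consistent: take $\epsilon$ from the De Giorgi threshold, then $\kappa$ with $\kappa^{\bar sp/2}\le\epsilon^{p-1}$, then $\sigma$ with $\kappa^\sigma\ge 1-\epsilon$. With these two adjustments your proof is complete.
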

\begin{proof}
By applying Lemmas \ref{lmn2.16} and \ref{lmn2.17}, we deduce that both functions $u_+$ and $(-u)+=u-$ are weak subsolutions. Then, by Theorem \ref{bddness}, the estimate \eqref{eq5.3} holds for $j=0$. We now prove \eqref{eq5.3} by induction. Assume that \eqref{eq5.3} holds for all $i=0,1,2,\ldots,j$ for some $j\geq 0$. To complete the induction argument, it suffices to establish \eqref{eq5.3} for $i=j+1$. We note that either
\begin{align}\label{eq5.4}
    \frac{|\{u\geq \essi_{B_j}u+\frac{\phi(r_j)}{2}\} \cap 2B_{j+1}|}{| 2B_{j+1}|}\geq \frac{1}{2},
\end{align}
or
\begin{align}\label{eq5.5}
    \frac{|\{u\leq \essi_{B_j}u+\frac{\phi(r_j)}{2}\} \cap 2B_{j+1}|}{| 2B_{j+1}|}\geq \frac{1}{2},
\end{align}
hold. Let us define
\begin{align}
    u_j=\begin{cases}
        & u- \essi_{B_j} u,  ~~~~~~~~~~~~~\text{ when } \eqref{eq5.4} \text{ holds },\\
        &\phi(r_j)-(u- \essi_{B_j} u), \text{ when } \eqref{eq5.5} \text{ holds}.\label{eq5.6}
    \end{cases}
\end{align}
Therefore, $u_j$ is a weak solution such that
\begin{enumerate}
    \item[(i)]  $u_j\geq 0$ in $B_j$,
    \item[(ii)]  $ \frac{|\{u_j\geq \frac{\phi(r_j)}{2}\} \cap 2B_{j+1}|}{| 2B_{j+1}|}\geq \frac{1}{2},$ \text { and}
    \item[(iii)] {$\ess_{B_i}|u_j|\leq 2 \phi(r_i)$  $\forall ~i\in  \{0,1,2,...,j\}.$} 
\end{enumerate}
Before proving the main result, we first claim that 
\begin{align}\label{eq5.7}
    [\operatorname{Tail}(u_j;x_0,r_j)]^{p-1} \leq C \kappa^{-\sigma(p-1)}\phi(r_j)^{p-1},
\end{align}
where $C:=C(N,p,\mu,\Sigma,|\frac{\bar{s}p}{p-1}-\sigma|)>0$ is a constant. 

\noindent\textbf{Proof of the claim:} Using the condition (iii) above, \eqref{eq5.9} and \eqref{eq5.100}, we get
\begin{align}\label{eq5.8}
   [\operatorname{Tail}(u_j;x_0,r_j)]^{p-1}=&\int_{(0,1)}{C_{N,p,s}}r_j^{sp}\left(\int_{\mathbb{R}^N\setminus B_{r_j}(x_0)}\frac{|u_j(x)|^{p-1}}{|x-x_0|^{N+sp}}\d x\right)\d \mu(s) \nonumber\\
   =& \int_{(0,1)}C_{N,p,s} r_j^{sp} \bigg( \sum_{i=1}^j \int_{B_{i-1}\setminus B_i} \frac{|u_j(x)|^{p-1}}{|x-x_0|^{N+sp}}\d x \nonumber \\
   &+ \int_{\mathbb{R}^N\setminus B_0} \frac{|u_j(x)|^{p-1}}{|x-x_0|^{N+sp}}\d x \bigg)\d \mu(s)  := L_1+L_2 \nonumber \\
      \leq & C {\phi(r_{j})^{p-1} \kappa^{-\sigma(p-1)}  \sum_{i=1}^j \kappa^{(j-i)\{\inf_{s\in \Sigma}[s]p-\sigma(p-1)\}} }  \nonumber \\
     \leq & C{\phi(r_{j})^{p-1} \kappa^{-\sigma(p-1)}  \sum_{i=1}^j \kappa^{(j-i)\{\bar{s}p-\sigma(p-1)\}} }  \nonumber \\
    \leq  & C  \phi(r_{j})^{p-1} \frac{\kappa^{-\sigma(p-1)}}{1-\kappa^{\bar{s}p-\sigma(p-1)}} \nonumber \\ 
     \leq  & C \frac{4^{\bar{s}p-\sigma(p-1)}}{\log4 \{\bar{s}p-\sigma(p-1)\}} \kappa^{-\sigma(p-1)} \phi(r_{j})^{p-1}= C \kappa^{-\sigma(p-1)} \phi(r_{j})^{p-1},
\end{align}
where $C=C(N,p,\mu,\Sigma,|\frac{\bar{s}p}{p-1}-\sigma|)>0$ is a constant and $0<\kappa\leq \frac{1}{4}$, $0<\sigma<\frac{\bar{s}p}{p-1}$. By using \eqref{eq5.1}, we get
\begin{align}\label{eq5.9}
   L_1=& \int_{(0,1)}C_{N,p,s} r_j^{sp} \left( \sum_{i=1}^j \int_{B_{i-1}\setminus B_i} \frac{|u_j(x)|^{p-1}}{|x-x_0|^{N+sp}}\d x \right)\d \mu(s) \nonumber\\
   \leq & \int_{(0,1)}C_{N,p,s} r_j^{sp} \left( \sum_{i=1}^j\ess_{B_{i-1}}|u_j|^{p-1} {{\int_{\mathbb{R}^N\setminus B_i}} \frac{1}{|x-x_0|^{N+sp}}\d x} \right) \d \mu(s)\nonumber \\
    \leq & C { \int_{(0,1)} {C_{N,p,s}\frac{1}{s}} \sum_{i=1}^j\left(\frac{r_j}{r_i}\right)^{sp}\phi(r_{i-1})^{p-1}d \mu(s)}\nonumber \\
    = & C  \int_{(0,1)} C_{N,p,s}\frac{1}{s}\phi(r_{0})^{p-1}\left( \frac{r_j}{r_0}\right)^{\sigma(p-1)}  \sum_{i=1}^j\left(\frac{r_{i-1}}{r_i}\right)^{\sigma(p-1)}\left( \frac{r_j}{r_i}\right)^{sp-\sigma(p-1)}\d \mu(s)\nonumber \\
    = & C  \int_{(0,1)} C_{N,p,s}\frac{1}{s} \phi(r_{j})^{p-1}  \sum_{i=1}^j \kappa^{-\sigma(p-1)} \kappa^{(j-i)\{sp-\sigma(p-1)\}}  \d  \mu(s)  \nonumber \\ 
     \leq & C \phi(r_{j})^{p-1} \kappa^{-\sigma(p-1)}  \sum_{i=1}^j \sup_{s\in \Sigma} \kappa^{(j-i)\{sp-\sigma(p-1)\}} \mu\{(0,1)\}, \nonumber \\
     \leq & C \phi(r_{j})^{p-1} \kappa^{-\sigma(p-1)}  \sum_{i=1}^j \sup_{s\in \Sigma} \kappa^{(j-i)\{sp-\sigma(p-1)\}}
\end{align}
and
\begin{align}\label{eq5.100}
    L_2=& \int_{(0,1)} C_{N,p,s}r_j^{sp} \left( \int_{\mathbb{R}^N\setminus B_0} \frac{|u_j(x)|^{p-1}}{|x-x_0|^{N+sp}}\d x \right)\d \mu(s)\nonumber\\
    \leq & C\int_{(0,1)}C_{N,p,s} r_j^{sp} \left( \int_{\mathbb{R}^N\setminus B_0} \frac{|u(x)|^{p-1}+\phi^{p-1}(r_0)+\sup_{B_0}|u|^{p-1}}{|x-x_0|^{N+sp}}\d x \right)\d \mu(s)\nonumber\\
    \leq & C\int_{(0,1)}C_{N,p,s} \left(\frac{r_j}{r_0}\right)^{sp}\left[ {\frac{1}{s}}\{\phi^{p-1}(r_0)+\sup_{B_0}|u|^{p-1}\} + r_0^{sp} \int_{\mathbb{R}^N\setminus B_0} \frac{|u(x)|^{p-1}}{|x-x_0|^{N+sp}}\d x\right] \d \mu(s)\nonumber\\
     \leq & C\int_{(0,1)}C_{N,p,s} \left(\frac{r_j}{r_1}\right)^{sp}\left[{\frac{1}{s}}\phi^{p-1}(r_0) + r_0^{sp} \int_{\mathbb{R}^N\setminus B_0} \frac{|u(x)|^{p-1}}{|x-x_0|^{N+sp}}\d x\right] \d \mu(s)\nonumber\\
 = & C\int_{(0,1)} C_{N,p,s}\left(\frac{r_j}{r_0}\right)^{\sigma(p-1)} \kappa^{(j-1)\{sp-\sigma(p-1)\}} \kappa^{-\sigma(p-1)} \bigg[{\frac{1}{s}}\phi^{p-1}(r_0) \nonumber \\
& + r_0^{sp} \int_{\mathbb{R}^N\setminus B_0} \frac{|u(x)|^{p-1}}{|x-x_0|^{N+sp}}\d x\bigg] \d \mu(s)\nonumber\\
     \leq & C \left(\frac{r_j}{r_0}\right)^{\sigma(p-1)} \sup_{s\in \Sigma} \kappa^{(j-1)\{sp-\sigma(p-1)\}} \kappa^{-\sigma(p-1)} \left[\phi^{p-1}(r_0) + [\operatorname{Tail}(u;x_0,r_0)]^{p-1} \right] \mu\{(0,1)\} \nonumber \\
     \leq & C \left(\frac{r_j}{r_0}\right)^{\sigma(p-1)} \phi^{p-1}(r_0) \sup_{s\in \Sigma} \kappa^{(j-1)\{sp-\sigma(p-1)\}} \kappa^{-\sigma(p-1)}\nonumber \\
     = & C \phi(r_j)^{p-1} \kappa^{-\sigma(p-1)}\sup_{s\in \Sigma} \kappa^{(j-1)\{sp-\sigma(p-1)\}}. 
\end{align}
Therefore, \eqref{eq5.8}, \eqref{eq5.9} and \eqref{eq5.100} proves the estimate \eqref{eq5.7}, concluding the claim.

Next, we proceed to prove the main result, which is divided into two steps.

\noindent \textbf{Step 1:} In this step, we prove that
\begin{align}\label{eq5.10}
    \frac{|\{u_j \leq 2 \tau\phi(r_j)\}\cap 2B_{r_{j+1}}(x_0)|}{|2B_{j+1}(x_0)|} \leq \frac{\bar{C}}{\log\frac{1}{\kappa}},
\end{align}
where $\tau=\kappa^{\frac{\bar{s}p}{p-1}-\sigma}$ for some positive constant $\bar{C}:=\bar{C}(N,p,\Sigma,\mu,|\frac{\bar{s}p}{p-1}-\sigma|)$. We denote
\begin{align*}
    \epsilon:=\log\left( \frac{\frac{\phi(r_j)}{2}+\tau\phi(r_j)}{3\tau \phi(r_j)}\right)=\log\left(\frac{\frac{1}{2}+\tau}{3\tau}\right) \sim \log \left( \frac{1}{\tau} \right), \text{ as } \tau \rightarrow 0,
\end{align*}
and 
\begin{align*}
    q:=\min\left\{ \left( \log\left( \frac{\frac{\phi(r_j)}{2}+\tau\phi(r_j)}{u_j+\tau \phi(r_j)}\right) \right)_+, \epsilon  \right\}.
\end{align*}
Using the condition $(ii)$ of $u_j$, we get
\begin{align}\label{eq5.13}
    \epsilon= &\frac{1}{|\{u_j\geq \frac{\phi(r_j)}{2}\} \cap 2B_{j+1}|}\int_{\{u_j\geq \frac{\phi(r_j)}{2}\} \cap 2B_{j+1}} \epsilon \d x \nonumber \\
     = &\frac{1}{|\{u_j\geq \frac{\phi(r_j)}{2}\} \cap 2B_{j+1}|}\int_{\{q=0\} \cap 2B_{j+1}} \epsilon \d x \nonumber \\
     \leq &\frac{2}{| 2B_{j+1}|}\int_{ 2B_{j+1}}\left(\epsilon-q\right) \d x=2\left(\epsilon-(q)_{ 2B_{j+1}}\right),
\end{align}
where $(q)_{ 2B_{j+1}}=\fint_{{ 2B_{j+1}}}q \d x$. Integrating both sides of \eqref{eq5.13} over $[{\{q=\epsilon\} \cap 2B_{j+1}}]$, we obtain
\begin{align}\label{eq5.14}
    \frac{|{\{q=\epsilon\} \cap 2B_{j+1}}|\epsilon}{|2B_{j+1}|} \leq & \frac{2}{|2B_{j+1}|} \int_{[{\{q=\epsilon\} \cap 2B_{j+1}}]} \left(\epsilon-(q)_{ 2B_{j+1}}\right) \d x \nonumber \\
    \leq & \frac{2}{|2B_{j+1}|} \int_{\{ 2B_{j+1}\}} |q-(q)_{ 2B_{j+1}}| \d x. 
\end{align}
Using Corollary \ref{coro} along with $b=\frac{\phi(r_j)}{2}$, $d=\tau \phi(r_j)$, $a=e^\epsilon$, and {$B_{j+1}\subset  B_{\frac{j}{2}}$}, we conclude that there exists a constant $C>0$, such that
\begin{align}\label{eq5.16}
    \fint_{\{ 2B_{j+1}\}} |q-(q)_{ 2B_{j+1}}|^p \d x \leq & C  \bigg((\tau \phi(r_j))^{1-p} \sup\limits_{s\in\Sigma}\left(\frac{r_{j+1}}{r_j}\right)^{sp}[\operatorname{Tail}((u_j)_-;x_0,2r_j)]^{p-1}+1\bigg), \nonumber \\
    \leq & C  \bigg((\tau \phi(r_j))^{1-p} \left(\frac{r_{j+1}}{r_j}\right)^{\bar{s}p}[\operatorname{Tail}(u_j;x_0,r_j)]^{p-1}+1\bigg).
\end{align}
From \eqref{eq5.7} and \eqref{eq5.16}, we get
\begin{align}\label{eq5.17}
    \fint_{\{ 2B_{j+1}\}} |q-(q)_{ 2B_{j+1}}| \d x \leq & C \left(\tau^{1-p}\kappa^{\bar{s}p-\sigma(p-1)}+1\right)\leq C.
\end{align}
Now using \eqref{eq5.14} and \eqref{eq5.17}, we obtain
\begin{align*}
    \frac{|\{u_j \leq 2 \tau\phi(r_j)\}\cap 2B_{r_{j+1}}(x_0)|}{|2B_{j+1}(x_0)|}=\frac{|{\{q=\epsilon\} \cap 2B_{j+1}}|}{|2B_{j+1}|}\leq \frac{C}{\epsilon} \leq \frac{\bar{C}}{\log\frac{1}{\kappa}}.
\end{align*}
\textbf{Step 2:} Finally, we are going to use an iterative argument to show \eqref{eq5.3} for $i=j+1$. Initially for any $i\in \mathbb{N}\cup \{0\}$, we denote $\gamma_i=\left(1+\frac{1}{2^i}\right)r_{j+1}$, $\bar{\gamma_i}=\frac{\gamma_i+\gamma_{i+1}}{2}$, $B^i=B_{\gamma_i}(x_0)$, $\bar{B^i}=B_{\bar{\gamma_i}}(x_0)$, $\rho_i=\left(1+\frac{1}{2^i}\right)\tau \phi(r_j)$ and $D^i=B^i \cap \{u_j \leq \rho_i\}$. Note that $r_{j+1}\leq \gamma_{i+1}\leq \bar{\gamma_i}\leq \gamma_i \leq 2 r_{j+1}<r_j$. 
\par Let us consider cut-off functions $w_i\in C_c^\infty(\bar{B^i})$ such that $0\leq w_i\leq 1$ in $\bar{B^i}$, $w_i=1$ in $B^{i+1}$ and $|\nabla w_i|\leq \frac{C2^{i}}{\gamma_i}$ in $\bar{B^i}$ with $C:=C(N,p)>0$. Let $\phi_i=(\rho_i-u_j)_+$. Note that $\rho_i-\rho_{i+1}=\frac{\tau \phi(r_j)}{2^{i+1}}$ and $\phi_i \leq \rho_i\leq 2\tau \phi(r_j)$ in $B^i$. Applying the Sobolev inequality \eqref{sob} and the convexity property, we obtain
     \begin{align}\label{eq5.19}
         (\rho_i-\rho_{i+1})^p\left( \frac{|D^{i+1}|}{|B^{i+1}|}\right)^\frac{1}{\eta} =& \frac{1}{|B^{i+1}|^\frac{1}{\eta}} \left( \int_{B^{i+1} \cap \{u_j \leq \rho_{i+1}\}} (\rho_i-\rho_{i+1})^{p\eta} \d x \right)^\frac{1}{\eta}\nonumber\\
         \leq & \left( \fint_{B^{i+1}}\phi_i^{\eta p} \d x\right)^\frac{1}{\eta}=\left( \fint_{B^{i+1}}\phi_i^{\eta p} w_i^{\eta p}\d x\right)^\frac{1}{\eta} \nonumber \\
          \leq & C \left( \fint_{B^{i}}\phi_i^{\eta p} w_i^{\eta p}\d x\right)^\frac{1}{\eta} \nonumber \\
           \leq & C \gamma_i^p \fint_{B^{i}} |\nabla(\phi_i w_i)|^p \d x  \nonumber \\
          \leq & C r_{j+1}^p  \left( \fint_{{B^{i}}} {\phi_i}^p|\nabla w_i|^p \d x+ \fint_{{B^{i}}} w_i^p |\nabla{\phi_i}|^p \d x\right)\nonumber\\
    := &S_1+S_2,
     \end{align}
 where $C:=C(N,p)>0$ is a constant and $\eta$ is as in \eqref{grad sob}.

 \noindent  \textbf{Estimate of $S_1$:} Using the properties of $w_j$, there exists a constant $C:=C(N,p)>0$ such that
     \begin{align}\label{eq5.20}
         S_1=& C r^{p}_{j+1} \fint_{{B^{i}}} {\phi^p_i}|\nabla w_i|^p \d x=\frac{C r^{p}_{j+1}}{|B^i|} \int_{{B_{i}\cap \{u_j \leq \rho_i\}}} {\phi^p_i}|\nabla w_i|^p \d x \nonumber \\
         \leq &C r^{p}_{j+1}\frac{2^{ip}}{\gamma_i^p} (2\tau \phi(r_j))^p \frac{|D^i|}{|B^i|}\leq C 2^{ip} (\tau \phi(r_j))^p \frac{|D^i|}{|B^i|}.
  \end{align}
   \noindent\textbf{Estimate of $S_2$:} Using the Caccioppoli inequality with tail Theorem \ref{lmn3.1} with $\phi={\phi_i}$ and $w=w_i$, there exists a constant $C:=C(p)>0$ such that
      \begin{align}\label{eq5.21}
          S_2=& Cr^{p}_{j+1} \fint_{{B^{i}}} w_i^p |\nabla{\phi_i}|^p \d x = \frac{C r^{p}_{j+1} \alpha}{|B^i|} \int_{{B^ {i}}} w_i^p |\nabla{\phi_i}|^p \d x\nonumber\\ \nonumber
          \leq &  \frac{C r^{p}_{j+1} }{|B^i|} \left( \int_{(0,1)} C_{N,s,p} \left( \iint_{B^i\times B^i} {|{\phi}_i(x) w_i(x)-{\phi_i}(y)w_i(y)|^p}\d \nu\right) \d \mu+\alpha \int_{{B^i}} w_i^p |\nabla{\phi_i}|^p \d x  \right)\\ \nonumber
    \leq & \frac{C r^{p}_{j+1} }{|B^i|}\Bigg[ \int_{(0,1)} C_{N,s,p} \left( \iint_{B^i\times B^i} { \max\{{\phi_i}(x), {\phi_i}(y)\}^p|w_i(x)-w_i(y)|^p}\d \nu\right)\d \mu \nonumber \\
  & + \int_{(0,1)} C_{N,s,p} \left( \ess_{x\in \supp w_i} \int_{\mathbb{R}^N\setminus B^i} \frac{{\phi_i}^{p-1}(y)}{|x-y|^{N+sp}}\d y . \int_{B^i}{\phi_i} w^p \d x \right)\d \mu \nonumber\\
  &+\alpha \int_{B^i} {\phi_i}^p|\nabla w_i|^p \d x \Bigg]:=Q_1+Q_2+Q_3.
    \end{align}
\noindent\textbf{Estimate of $Q_1$:} By using the property of $w_i$, there exists $C:=C(N,p,\Sigma,\mu)$ such that
\begin{align}\label{eq5.22}
    Q_1=& \frac{C r^{p}_{j+1} }{|B^i|} \int_{(0,1)} C_{N,s,p} \bigg( \iint_{B^i\times B^i} \frac{ \max\{{\phi_i}(x), {\phi_i}(y)\}^p|w_i(x)-w_i(y)|^p}{|x-y|^{N+sp}}\d x\d y\bigg)\d \mu \nonumber \\
    \leq & \frac{C2^{ip} }{|B^i|} \rho_i^p \int_{(0,1)} C_{N,s,p} {\left\{\int_{D^i} \left( \int_{B^i}\frac{1}{|x-y|^{N-p+sp}}\d y\right) \d x \right\}\d \mu} \nonumber \\
    \leq & \frac{C2^{ip} }{|B^i|} (\tau \phi(r_j))^p  \sup_{s\in \Sigma}\gamma_i^{(p-sp)} |D_i| \mu\{(0,1)\}\nonumber \\
     \leq & C 2^{ip} (\tau \phi(r_j))^p \frac{|D^i|}{|B^i|},
\end{align}
\noindent\textbf{Estimate of $Q_2$:} Also, we have
\begin{align}\label{eq5.23}
    Q_2=\frac{C r^{p}_{j+1} }{|B^i|} \int_{(0,1)} C_{N,s,p} \left( \ess_{x\in \supp w_i} \int_{\mathbb{R}^N\setminus B^i} \frac{{\phi_i}^{p-1}(y)}{|x-y|^{N+sp}}\d y . \int_{B^i}{\phi_i} w^p \d x \right)\d \mu. 
\end{align}
Observe that for $x\in \supp w_i=\bar{B^i}$ and $y\in \mathbb{R}^N\setminus B^i$, we have
\begin{align}\label{eq5.24}
    \frac{1}{|y-x|}=\frac{|y-x_0|}{|y-x|} \frac{1}{|y-x_0|}\leq \frac{1}{|y-x_0|} \left( 1+\frac{\bar{\gamma_i}}{\gamma_i-\bar{\gamma_i}}\right)\leq \frac{1}{|y-x_0|}  2^{i+3}
\end{align}
 and
 \begin{align}\label{eq5.25}
     \int_{B^i}{\phi_i} w^p \d x\leq C(\tau\phi(r_j))|D^i|,
 \end{align}
for some constant $C:=C(N,p)>0$. 
Now  using \eqref{eq5.8}, we obtain
 \begin{align}\label{eq5.26}
     [\operatorname{Tail}(\phi_i;x_0,r_{j+1})]^{p-1}\leq & C \bigg[\int_{(0,1)} C_{N,s,p}\left(\int_{B_j\setminus B_{j+1}} \frac{r^{sp}_{j+1}\phi_i^{p-1}}{|x-x_0|^{N+sp}}\d x\right)\d \mu \nonumber \\
     &+ \sup_{s\in \Sigma} \left( \frac{r_{j+1}}{r_j}\right)^{sp} [\operatorname{Tail}(\phi_i;x_0,r_{j})]^{p-1}\bigg] \nonumber \\
      \leq &  C\bigg[\int_{(0,1)}C_{N,s,p}{\left(\int_{B_j\setminus B_{j+1}} \frac{r^{sp}_{j+1}(\tau \phi(r_j))^{p-1}}{|x-x_0|^{N+sp}}\d x\right)\d \mu  }\nonumber \\
      &+ \sup_{s\in \Sigma} \kappa^{sp} [\operatorname{Tail}(u_j;x_0,r_{j})]^{p-1}\bigg] \nonumber \\
    \leq &  C \left((\tau \phi(r_j))^{p-1} + \kappa^{\inf_{{s}\in \Sigma}\{sp\} } \kappa^{-\sigma(p-1)} \phi^{p-1}(r_{j}) \right) \nonumber  \\
     \leq &  C \left( 1+\frac{k^{\bar{s}p-\sigma(p-1)}}{\tau^{p-1}} \right)(\tau \phi(r_j))^{p-1} \leq C (\tau \phi(r_j))^{p-1},
 \end{align}
for some constant $C:=C(N,p,\mu, \Sigma)>0$.
Using \eqref{eq5.24}, \eqref{eq5.25} and \eqref{eq5.26} in \eqref{eq5.23}, we get
 \begin{align}\label{eq5.27}
     Q_2 \leq & \frac{C }{|B^i|}(\tau\phi(r_j))|D^i| \sup_{s\in \Sigma}\{2^{i(N+sp)}\} \sup_{s\in \Sigma}\{ r^{p-sp}_{j+1}\} [\operatorname{Tail}(\phi_i;x_0,r_{j+1})]^{p-1} \nonumber \\
      \leq &C \frac{|D^i| }{|B^i|} 2^{i(N+p\sup_{s\in \Sigma}s)} (\tau\phi(r_j))^p.
 \end{align}
 \noindent\textbf{Estimate of $Q_3$:} Following an argument similar to the \textbf{Estimate of $S_1$} in \eqref{eq5.20}, we obtain
\begin{align}\label{eq5.28}
    Q_3=\frac{C r^{p}_{j+1} }{|B^i|} \alpha \int_{B^i} {\phi_i}^p|\nabla w_i|^p \d x \leq C 2^{ip} (\tau \phi(r_j))^p \frac{|D^i|}{|B^i|}. 
\end{align}
Now using \eqref{eq5.22}, \eqref{eq5.27} and \eqref{eq5.28} in \eqref{eq5.21}, we obtain
\begin{align}\label{eq5.29}
    S_2 \leq C2^{i(N+p+p\sup_{s\in \Sigma}s)} (\tau\phi(r_j))^p\frac{|D^i| }{|B^i|}.
\end{align}
Recall that $(\rho_i-\rho_{i+1})=\frac{\tau \phi(r_j)}{2^{i+1}}$. Thus, applying \eqref{eq5.20} and \eqref{eq5.29} in \eqref{eq5.19},
we get
\begin{align}\label{eq5.30}
    \left(\frac{\tau \phi(r_j)}{2^{i+1}}\right)^p\left( \frac{|D^{i+1}|}{|B^{i+1}|}\right)^\frac{1}{\eta}\leq C2^{i(N+p+p\sup_{s\in \Sigma}s)} (\tau\phi(r_j))^p\frac{|D^i| }{|B^i|}.
\end{align}
From \eqref{eq5.30}, we get 
\begin{align}\label{eq5.31}
    \frac{|D^{i+1}|}{|B^{i+1}|} \leq C 2^{i(N+2p+p\sup_{s\in \Sigma}s)\eta}\left(\frac{|D^i| }{|B^i|}\right)^{\eta}.
\end{align}
Let $\frac{|D^i| }{|B^i|}=P_i$, $C_1=C$, $C_2=2^{(N+2p+p\sup_{s\in \Sigma}s)\eta}$, $\beta+1=\eta$ and $\kappa_1=C_1^{-\frac{1}{\beta}}C_2^{-\frac{1}{\beta^2}}$. Therefore, using \eqref{eq5.10} and \eqref{eq5.31}, we obtain
$$P_{i+1}\leq C_1 C_2^i P_i^{\beta+1}~~\text{ and }~~P_0 \leq \frac{\bar{C}}{\log\frac{1}{\kappa}}.$$
Choosing, $\kappa \leq \frac{1}{2}\min\{\frac{1}{4},e^{-\frac{\bar{C}}{\kappa_1}}\}$, we get
$$P_0\leq \kappa_1.$$
Therefore, using Lemma \ref{MI Lemma}, we conclude that $$\lim_{i\rightarrow \infty}P_i=0.$$
Thus, we have $u_j\geq\tau\phi(r_j)$ in $B_{j+1}$. Moreover, using the definition of $u_j$, we deduce that
\begin{align*}
   \operatorname{osc}_{B_{j+1}} u\leq (1-\tau) \phi(r_j) =(1-\tau)\kappa^{-\sigma}\phi(r_{j+1}) \leq \phi(r_{j+1}), 
\end{align*}
where $\sigma\in(0, \frac{\bar{s}p}{p-1})$ is sufficiently small such that $\kappa^\sigma \geq (1-\tau)$. Therefore, the induction principle is true for $i=j+1$. This completes the proof.
\end{proof}
Using the above Lemma \ref{lmn5.1}, we conclude the H\"older continuity as follow.
\begin{thm}\label{Holder t}
    Let $u$ be a weak solution of \eqref{MP}. Then $u$ is locally H\"older continuous in $\Omega$. Moreover, there exists $\sigma\in(0, \frac{\bar{s}p}{p-1})$ and positive constant $C:=C(N,p,\Sigma,\mu)$ such that
    \begin{equation*}
        \operatorname{osc}_{B_\epsilon(x_0)}u\leq C\left(\frac{\epsilon}{r} \right)^\sigma \left(  \operatorname{Tail}(u;x_0,{r})+C\left(\fint_{B_{2r}(x_0)}|u|^p \d x \right)^\frac{1}{p} \right), 
    \end{equation*}
    where $B_{2r}(x_0)\subset \Omega$ such that $r\in (0,1]$ and $\epsilon\in (0,r]$.
\end{thm}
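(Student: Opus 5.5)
This follows from Lemma~\ref{lmn5.1} by a standard dyadic-radius argument. Fix $x_0$ and $r\in(0,1]$ with $B_{2r}(x_0)\subset\Omega$.

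\textbf{Applying the lemma.} We apply Lemma~\ref{lmn5.1} with $R=2r$, radius $2r$ in place of $r$ (or with $r$ itself, adjusting so that $r<R/2$ holds). This requires a choice compatible with the constraint $0<r<\frac{R}{2}$ there. Concretely, we apply the lemma with radius $r$ and $R$ slightly larger, or note that only $B_r\subset B_{2r}\subset\Omega$ is actually used. With the fixed $\kappa\in(0,\frac14]$ and $\sigma\in(0,\frac{\bar s p}{p-1})$ produced in its proof, we get
\[
\operatorname{osc}_{B_{r_j}(x_0)}u\le \phi(r_j)=\kappa^{j\sigma}\phi(r_0),\qquad r_j=\kappa^j\tfrac r2 ,
\]
where $\tfrac12\phi(r_0)=\operatorname{Tail}(u;x_0,\tfrac r2)+C(\fint_{B_r}|u|^p)^{1/p}$.

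\textbf{Interpolating between radii.} Given $\epsilon\in(0,r]$, there are two cases.

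If $\epsilon\ge r/2$, we bound $\operatorname{osc}_{B_\epsilon}u$ crudely. Since $\epsilon/r\ge1/2$, the factor $(\epsilon/r)^\sigma$ is bounded below, so it suffices to bound $\operatorname{osc}_{B_r}u\le 2\operatorname{ess\,sup}_{B_r}|u|$. Theorem~\ref{bddness} applied to $u_\pm$ (both subsolutions by Lemmas~\ref{lmn2.16}, \ref{lmn2.17}) on $B_{2r}$ with $\delta=1$ gives this bound by $\operatorname{Tail}(u;x_0,r)+C(\fint_{B_{2r}}|u|^p)^{1/p}$.

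If $\epsilon<r/2$, we choose $j\ge0$ with $r_{j+1}\le\epsilon<r_j$. Then
\[
\operatorname{osc}_{B_\epsilon}u\le\operatorname{osc}_{B_{r_j}}u\le\kappa^{j\sigma}\phi(r_0).
\]
Moreover $\kappa^{j}=\kappa^{-1}\,\frac{2r_{j+1}}{r}\le \frac{2}{\kappa}\frac{\epsilon}{r}$, so $\kappa^{j\sigma}\le (2/\kappa)^\sigma(\epsilon/r)^\sigma$. Since $\kappa$ is a fixed constant depending only on the data, $(2/\kappa)^\sigma$ is absorbed into $C$.

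\textbf{Converting the right-hand side.} We must bound $\phi(r_0)$ by the quantities in the statement, i.e.\ replace $\operatorname{Tail}(u;x_0,\tfrac r2)$ and $\fint_{B_r}$ by $\operatorname{Tail}(u;x_0,r)$ and $\fint_{B_{2r}}$. The average is immediate: $\fint_{B_r}|u|^p\le 2^N\fint_{B_{2r}}|u|^p$. For the tail, split $\mathbb R^N\setminus B_{r/2}$ into the annulus $B_r\setminus B_{r/2}$ and $\mathbb R^N\setminus B_r$. The outer piece gives $\int C_{N,s,p}(r/2)^{sp}\int_{\mathbb R^N\setminus B_r}\cdots\,d\mu\le[\operatorname{Tail}(u;x_0,r)]^{p-1}$. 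On the annulus, $|y-x_0|\ge r/2$ gives
\[
\int_{(0,1)}C_{N,s,p}\,(r/2)^{sp}(r/2)^{-N-sp}\int_{B_r}|u|^{p-1}\,dy\,d\mu
\le C\mu((0,1))\sup_s C_{N,s,p}\Big(\fint_{B_r}|u|^{p}\Big)^{\frac{p-1}{p}}
\]
by H\"older. Taking $(p-1)$-th roots, $\operatorname{Tail}(u;x_0,\tfrac r2)\le C\big(\operatorname{Tail}(u;x_0,r)+(\fint_{B_{2r}}|u|^p)^{1/p}\big)$.

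Here the normalizing constant $C_{N,s,p}$ is bounded uniformly in $s\in(0,1)$. This is the point where its inclusion in the tail matters, and it is the step I expect to need the most care: one must check that the constants, including $\sup_s C_{N,s,p}$ and the $1/s$ factors hidden in Lemma~\ref{lmn5.1} (controlled by $\bar s>0$), stay finite. Combining the two cases with this tail comparison gives the stated estimate, and local H\"older continuity follows by covering any compact subset of $\Omega$ with such balls.
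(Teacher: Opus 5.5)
Your proposal is correct and is the argument the paper intends. The paper states Theorem~\ref{Holder t} as a direct consequence of Lemma~\ref{lmn5.1} and writes no further proof; your dyadic interpolation $r_{j+1}\le\epsilon<r_j$, together with the comparison $\operatorname{Tail}(u;x_0,\tfrac r2)\le C\big(\operatorname{Tail}(u;x_0,r)+(\fint_{B_{2r}}|u|^p)^{1/p}\big)$ (which relies on $C_{N,s,p}$ being bounded in $s$), supplies exactly the omitted steps. One small point: in the case $\epsilon\ge r/2$ you apply Theorem~\ref{bddness} on $B_{2r}$, whose radius may exceed $1$; this is harmless, since that proof uses $r\le 1$ only to bound $\sup_{s\in\Sigma}r^{p-sp}$, so radii up to $2$ just change the constant.
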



\section{Weak Harnack Inequality} \label{sec6}
This section is devoted to proving the weak Harnack inequality for the problem \eqref{MP}. The following lemma establishes the expansion-of-positivity technique, which works well with superposition operators.
\begin{lem}\label{lmn6.1}
    Let $u$ be a weak supersolution of \eqref{MP} such that $u\geq 0$ in $B_R(x_0)\subset \Omega$. Suppose that $\zeta\geq 0$ and there exists $\sigma\in (0,1]$ such that 
    \begin{align}\label{eq6.1}
        |\{u\geq \zeta\} \cap B_r(x_0)| \geq \sigma |B_r(x_0)|,
    \end{align}
    for some $r\in( 0,1]$ with $0<16r<R$. Then there exists a constant $C:=C(N,p,\mu,\Sigma)>0$ such that
    \begin{align}\label{eq6.2}
        |B_{6r}(x_0)\cap \{u\leq 2\delta \zeta-{ \frac{1}{2}\sup_{s\in \Sigma}\bigg(\frac{r}{R}\bigg)^{\frac{sp}{p-1}}  \operatorname{Tail}\big(u_-;x_0,R\big)-\epsilon \} }| \leq \frac{C|B_{6r}(x_0)|}{\sigma \log \frac{1}{2\delta}},
    \end{align}
    for any $\delta \in (0,\frac{1}{4})$ and $\epsilon>0$.
\end{lem}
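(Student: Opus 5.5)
We follow the expansion-of-positivity scheme of Di Castro--Kuusi--Palatucci, with the logarithmic estimate Lemma \ref{lmn3.4} (through its Poincaré consequence, as in Corollary \ref{coro}) as the main tool. First we fix the shift constant
\[
d:=\tfrac12\sup_{s\in\Sigma}\Big(\tfrac{r}{R}\Big)^{\frac{sp}{p-1}}\operatorname{Tail}(u_-;x_0,R)+\epsilon>0 .
\]
The term $\epsilon>0$ guarantees $d>0$ even when the tail vanishes. We then work on the ball $B_{8r}(x_0)\subset B_{R/2}(x_0)$; this inclusion holds because $16r<R$. On $B_{8r}$ we apply Lemma \ref{lmn3.4} with radius $8r$. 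Since $8r\le 8$ and the exponents $sp$ lie in a bounded range, the factor $r^{p-sp}$ that appears after multiplying by $r^{p-N}$ is bounded by a constant depending on $\Sigma$. The choice of $d$ gives
\[
d^{1-p}\sup_{s\in\Sigma}(8r/R)^{sp}\,[\operatorname{Tail}(u_-;x_0,R)]^{p-1}\le C,
\]
because $d^{p-1}\ge 2^{1-p}\sup_s (r/R)^{sp}\operatorname{Tail}^{p-1}$. Hence the whole right-hand side of the log estimate, scaled by $r^{p-N}$, is bounded by a constant $C(N,p,\mu,\Sigma)$.

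Next we introduce the truncated logarithm
\[
v:=\min\Big\{\Big(\log\frac{\zeta+d}{u+d}\Big)_+,\ \log\frac{1}{2\delta}\Big\}.
\]
Since $\phi\mapsto\min\{(\cdot)_+,k\}$ is $1$-Lipschitz, $|\nabla v|\le|\nabla\log(u+d)|$. Applying the Poincaré inequality on $B_{6r}$ exactly as in Corollary \ref{coro} (with $b=\zeta$, $a=1/(2\delta)$), we get
\[
\fint_{B_{6r}}|v-(v)_{B_{6r}}|\,dx\le\Big(\fint_{B_{6r}}|v-(v)_{B_{6r}}|^p\Big)^{1/p}\le C.
\]
This step only uses $u\ge0$ in $B_R$ and the supersolution property, both of which hold under the hypotheses of Lemma \ref{lmn6.1}.

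The measure information \eqref{eq6.1} now enters. On $\{u\ge\zeta\}\cap B_r$ we have $v=0$, and this set has measure at least $\sigma|B_r|=\sigma 6^{-N}|B_{6r}|$. For any point with $v=0$ we have $(v)_{B_{6r}}=|v-(v)_{B_{6r}}|$ there, so
\[
(v)_{B_{6r}}\le\frac{6^N}{\sigma|B_{6r}|}\int_{B_{6r}}|v-(v)_{B_{6r}}|\le \frac{C}{\sigma}.
\]
On the other hand, consider the set $\{u\le 2\delta\zeta-d\}\cap B_{6r}$; this is exactly the set in \eqref{eq6.2}, since $-d$ equals minus half the scaled tail minus $\epsilon$. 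On it, $u+d\le 2\delta\zeta\le 2\delta(\zeta+d)$. Therefore $\log\frac{\zeta+d}{u+d}\ge\log\frac1{2\delta}$, so $v=\log\frac1{2\delta}$ there. If this set is empty the claim is trivial. Otherwise
\[
\log\tfrac{1}{2\delta}\,\big|\{u\le 2\delta\zeta-d\}\cap B_{6r}\big|\le\int_{B_{6r}}v
\le\int_{B_{6r}}|v-(v)_{B_{6r}}|+|B_{6r}|(v)_{B_{6r}}\le \frac{C}{\sigma}|B_{6r}|,
\]
which is \eqref{eq6.2}. Note that $\delta<1/4$ makes $\log\frac1{2\delta}>0$.

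The main obstacle is the bookkeeping in the first step. We must check that the tail contribution in Lemma \ref{lmn3.4}, with its mixed $\sup_{s}$ powers of $r/R$ and $r^{-sp}$ against the local $r^{-p}$ scaling, is absorbed by the single choice of $d$ uniformly over $s\in\Sigma$. This is where we use $r\le1$ and the fact that the sup over $s$ of $(r/R)^{sp/(p-1)}$ in $d$ dominates $\sup_s (r/R)^{sp}$ after raising to the power $p-1$. A secondary point is that constants must not depend on $r$, which follows from $r^{p-sp}\le 8^p$.
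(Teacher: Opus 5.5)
Your proposal is correct and follows essentially the same route as the paper. The only difference is in the first step. The paper re-derives the bound $\int_{B_{6r}(x_0)}|\nabla\log(u+d)|^p\,dx\le Cr^{N-p}$ inline, by testing with $(u+d)^{1-p}w^p$, where $w$ is cut off in $B_{7r}(x_0)$ and the tail integral is split according to the sign of $u(y)+d$. You instead quote Lemma \ref{lmn3.4} on $B_{8r}(x_0)\subset B_{R/2}(x_0)$ with the same $d$. After that, your Poincaré step and the density argument on the truncated logarithm coincide with the paper's.
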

\begin{proof}
    Let $w\in C_c^\infty(B_{7r}(x_0))$ be such that $w(x)=1$ for all $x\in B_{6r}(x_0)$, $0\leq w(x)\leq 1$ for all $x\in B_{7r}(x_0)$ and $|\nabla w|\leq \frac{8}{r}$. We choose $v=u+d_\epsilon$, where
    \begin{align*}
        d_\epsilon=\frac{1}{2}\sup_{s\in \Sigma}\bigg(\frac{r}{R}\bigg)^{\frac{sp}{p-1}}  \operatorname{Tail}\big(u_-;x_0,R\big)+\epsilon.
    \end{align*}
    Since, $v$ is a weak supersolution of \eqref{MP}, choosing $\psi=v^{1-p}w^p$ as a test function to get
    \begin{align}\label{eq6.4}
        0\leq & \int_{(0,1)}C_{N, s, p}\left(\iint_{B_{8r}(x_0)\times B_{8r}(x_0)} {{ \mathcal{A}(v(x,y))(v^{1-p}(x)w^p(x)-v^{1-p}(y)w^p(y))}}\d \nu \right) \d \mu(s)\nonumber\\
        &+2 \int_{(0,1)}C_{N, s, p} \left(\int_{\mathbb{R}^N\setminus {B_{8r}(x_0)}}\int_{B_{8r}(x_0)} {{ \mathcal{A}(v(x,y))(v^{1-p}(x)w^p(x))}}\d \nu\right) \d \mu(s)\nonumber\\
        & + \alpha \int_{B_{8r}(x_0)} |\nabla v(x)|^{p-2}\nabla v(x) \cdot \nabla (v^{1-p}(x)w^p(x)) \d x:= L_1+2L_2+\alpha L_3.
    \end{align}
        \textbf{Estimate of $L_1$:} Using similar arguments as in \cite[Lemma $5.1$]{AGKR2025i} (specifically $(5.23)$ and $(5.28)$), we obtain
        \begin{align}\label{eq6.5}
            L_1 \leq& -\frac{1}{C} \int_{(0,1)} C_{N, s, p}\bigg( \iint_{B_{8r}(x_0)\times B_{8r}(x_0)}\bigg|\log\bigg(\frac{v(x)}{v(y)} \bigg)\bigg|^p {w^p(y)}\d \nu \bigg) \d \mu(s)+C\sup_{s\in \Sigma} r^{N-sp} \nonumber \\
            \leq &  -\frac{1}{C} \int_{(0,1)} C_{N, s, p} \bigg(\iint_{B_{6r}(x_0)\times B_{6r}(x_0)}\bigg|\log\bigg(\frac{v(x)}{v(y)}\bigg) \bigg|^p\d \nu \bigg) \d \mu(s)+Cr^{N-p}.
        \end{align}
\textbf{Estimate of $L_2$:} Also, we have
            \begin{align}\label{eq6.6}
                L_2= &\int_{(0,1)}C_{N, s, p}\left(\int_{\mathbb{R}^N\setminus {B_{8r}(x_0)} \cap \{v(y)<0\}}\int_{B_{8r}(x_0)} {{ \mathcal{A}(v(x,y))(v^{1-p}(x)w^p(x))}}\d \nu \right) \d \mu(s)\nonumber\\
                &+ \int_{(0,1)}C_{N, s, p}\left(\int_{\mathbb{R}^N\setminus {B_{8r}(x_0)}\cap \{v(y)\geq 0\}}\int_{B_{8r}(x_0)}{{ \mathcal{A}(v(x,y))(v^{1-p}(x)w^p(x))}} \d \nu \right) \d \mu(s)\nonumber\\
                :=&L_2'+L_2''.
            \end{align}
            \textbf{Estimate of $L_2'$:} Since the support of $w$ is in $B_{7r}(x_0)$, using the definition of $v$ and $v_-\leq u_-$, we get
            \begin{align}\label{eq6.7}
                L_2'=&\int_{(0,1)}C_{N, s, p}\left(\int_{\mathbb{R}^N\setminus {B_{8r}(x_0)} \cap \{v(y)<0\}}\int_{B_{8r}(x_0)} \frac{{|v(x)+v_-(y)|^{p-1}}{(v^{1-p}(x)w^p(x))}}{|x-y|^{N+s p}} \d x \d y\right) \d \mu(s)\nonumber\\
                \leq & C  r^N \int_{(0,1)}C_{N, s, p}\left( \int_{\mathbb{R}^N\setminus {B_{8r}(x_0)}} \bigg(1+\frac{u_-(y)}{d_\epsilon}\bigg)^{p-1} {\frac{8^{N+sp}}{|x_0-y|^{N+s p}}} \right) \d \mu(s) \nonumber\\
                 \leq & C r^N \sup_{s\in\Sigma} r^{-sp} + C r^N \sup_{s\in \Sigma}8^{N+sp} \sup_{s\in \Sigma} \bigg(\frac{r}{R}\bigg)^{sp}  \sup_{s\in \Sigma}r^{-sp} d_\epsilon^{1-p} [\operatorname{Tail}(u_-,x_0,R)]^{p-1} \nonumber\\
                 \leq& C r^{N-p}.
            \end{align}
\textbf{Estimate of $L_2''$:} Note that $v(x)-v(y) \leq 0$ implies $L_2'' \leq 0$ and $v(x)-v(y) \geq 0$ implies $|v(x)-v(y)|^{p-2}(v(x)-v(y)) \leq v^{p-1}(x)$. Therefore,  using the support of $w$ in $B_{7r}(x_0)$, we obtain 
\begin{align}\label{eq6.8}
    L_2'' \leq &\int_{(0,1)}C_{N, s, p}\left(\int_{\mathbb{R}^N\setminus {B_{8r}(x_0)} \cap \{v(y)\geq 0\}}\int_{B_{8r}(x_0)} \frac{w^p(x)}{|x-y|^{N+s p}}\d x \d y \right) \d \mu(s) \nonumber \\
    \leq& \int_{(0,1)}C_{N, s, p}\left(\int_{\mathbb{R}^N\setminus {B_{8r}(x_0)}} \int_{B_{7r}(x_0)} {\frac{8^{N+sp}}{|x_0-y|^{N+s p}}} \d x \d y \right) \d \mu(s) \nonumber \\
    \leq&C  \sup_{s\in\Sigma} r^{N-sp} \leq C r^{N-p}.
\end{align}
\textbf{Estimate of $L_3$:} Since, {$|\nabla \log v|=|v|^{-1} |\nabla v|$}, using Young's inequality and property of $w$, we get
\begin{align}\label{eq6.9}
    L_3=&\int_{B_{8r}(x_0)} |\nabla v(x)|^{p-2}\nabla v(x) \cdot \nabla (v^{1-p}(x)w^p(x)) \d x \nonumber \\
    \leq & - (p-1)\int_{B_{8r}(x_0)} |\nabla \log v|^p w^p \d x+p\int_{B_{8r}(x_0)} v^{1-p}(x)|\nabla v(x)|^{p-1}|\nabla w(x)| w^{p-1}(x)\d x \nonumber \\
    \leq & - (p-1)\int_{B_{8r}(x_0)} |\nabla \log v|^p w^p \d x+p\int_{B_{8r}(x_0)} |\nabla \log v|^{p-1}w^{p-1}(x) \frac{1}{2^{p-1}} 2^{p-1}|\nabla w(x)| \d x \nonumber \\
    \leq & - (p-1)\int_{B_{8r}(x_0)} |\nabla \log v|^p w^p \d x+ \frac{p-1}{2} \int_{B_{8r}(x_0)} |\nabla \log v|^p w^p \d x\nonumber \\
    &+C(p) \int_{B_{8r}(x_0)} |\nabla w|^p \d x \nonumber \\
    \leq &-C \int_{B_{6r}(x_0)} |\nabla \log v|^p  \d x+ Cr^{N-p}.
\end{align}
By applying \eqref{eq6.5}, \eqref{eq6.6}, \eqref{eq6.7}, \eqref{eq6.8}, \eqref{eq6.9} in \eqref{eq6.4}, we obtain
\begin{align}\label{eq6.11}
 \int_{(0,1)} C_{N, s, p} & \bigg(\iint_{B_{6r}(x_0)\times B_{6r}(x_0)}\bigg|\log\bigg(\frac{v(x)}{v(y)}\bigg) \bigg|^p \frac{1}{|x-y|^{N+s p}} \d x \d y\bigg) \d \mu(s)\nonumber \\
   &+\alpha \int_{B_{6r}(x_0)} |\nabla \log v|^p  \d x \leq Cr^{N-p}.
\end{align}
For $\delta\in (0,\frac{1}{4})$ and $\zeta \geq 0$, we set 
$$w_1=:\bigg( \min \bigg\{ \log \frac{1}{2\delta}, \log\frac{\zeta+d_\epsilon}{v} \bigg\}\bigg)_+.$$ 
Observe that 
\begin{align}\label{eq6.120}
\{w_1=0\}=\{v\geq \zeta+d_\epsilon \}=\{u\geq \zeta \}.
\end{align}
From assumptions \eqref{eq6.1} and \eqref{eq6.120}, we get
 \begin{align}\label{eq6.15}
        |\{u\geq \zeta\} \cap B_{6r}(x_0)| \geq \frac{\sigma}{6^N} |B_{6r}(x_0)|.
    \end{align}
    Therefore, using \eqref{eq6.15}, we derive
\begin{align}\label{eq6.16}
    \log \frac{1}{2\delta}=&\frac{1}{|\{w_1=0\} \cap B_{6r}(x_0)|} \int_{\{w_1=0\} \cap B_{6r}(x_0)} \bigg( \log \frac{1}{2\delta} -w_1(x) \bigg) \d x \nonumber \\
    & \leq \frac{6^N}{\sigma |B_{6r}(x_0)|} \int_{B_{6r}(x_0)} \bigg( \log \frac{1}{2\delta} -w_1(x) \bigg) \d x \nonumber \\
    &\leq \frac{6^N}{\sigma } \bigg( \log \frac{1}{2\delta} -(w_1(x))_{B_{6r}(x_0)} \bigg), 
\end{align}
where $(w_1(x))_{B_{6r}(x_0)}=\frac{1}{|{B_{6r}(x_0)}|}\int_{B_{6r}(x_0)} w_1 \d x$.
From \eqref{eq6.11} and definition of $w_1$, we get
\begin{align}\label{eq6.12}
  { \int_{B_{6r}(x_0)} |\nabla w_1|^p  \d x\leq \int_{B_{6r}(x_0)} |\nabla \log v|^p  \d x \leq Cr^{N-p}.}
\end{align}
Using the Poinca\'re inequality \cite[Theorem 2]{EVANS2022} together with \eqref{eq6.12} and H\"older inequality, we deduce that
\begin{align}\label{eq6.14}
    \int_{B_{6r}(x_0)} |w_1 -(w_1)_{B_{6r}(x_0)}| \d x \leq C r^{1+\frac{N}{p'}}\bigg(\int_{B_{6r}(x_0)} |\nabla w_1|^p  \d x \bigg)^\frac{1}{p} \leq C r^N= C|B_{6r}(x_0)|,
\end{align}
where $p'=\frac{p}{p-1}$. Integrating both sides of \eqref{eq6.16} over $\{w_1=\log \frac{1}{2\delta}\} \cap \{B_{6r}(x_0)\}$ and using \eqref{eq6.14}, we get
\begin{align}\label{eq6.17}
    &\bigg|\bigg\{w_1=\log \frac{1}{2\delta}\bigg\} \cap \{B_{6r}(x_0)\} \bigg| \log \frac{1}{2\delta} \nonumber \\
    \leq &\frac{6^N}{\sigma } \int_{\{w_1=\log \frac{1}{2\delta}\} \cap \{B_{6r}(x_0)\}}    \bigg( \log \frac{1}{2\delta} -(w_1(x))_{B_{6r}(x_0)} \bigg) \d x \nonumber \\
    \leq & \frac{6^N}{\sigma } \int_{{B_{6r}(x_0)}}  | w_1 -(w_1(x))_{B_{6r}(x_0)} | \d x \nonumber \\ \leq & \frac{C}{\sigma}|B_{6r}(x_0)|.
\end{align}
Note that $\bigg\{w_1=\log \frac{1}{2\delta}\bigg\}=\{v\leq 2\delta (\zeta+d_\epsilon)\}=\{u\leq 2\delta\zeta+d_\epsilon(2\delta-1)\}$ and $\delta\in (0,\frac{1}{4})$. Therefore, using \eqref{eq6.17} together with the above fact, we conclude that \eqref{eq6.2} holds.
\end{proof}

\begin{lem}\label{lmn6.2}
    Let $u$ be a weak supersolution of \eqref{MP} such that $u\geq 0$ in $B_R(x_0)\subset \Omega$. Suppose that $\zeta\geq 0$ and there exists $\sigma\in (0,1]$ such that 
    \begin{align*}
        |\{u\geq \zeta\} \cap B_r(x_0)| \geq \sigma |B_r(x_0)|,
    \end{align*}
    for some $r\in( 0,1]$ with $0<16r<R$. Then there exists a constant $\delta \in (0,\frac{1}{4})$ such that
    \begin{align}\label{eq6.19}
        \essi_{B_{4r}(x_0)} u \geq \delta \zeta -\sup_{s\in \Sigma}\bigg(\frac{r}{R}\bigg)^{\frac{sp}{p-1}}  \operatorname{Tail}\big(u_-;x_0,R\big).
    \end{align}
\end{lem}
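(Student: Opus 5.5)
The plan is a De Giorgi iteration for the truncations $(\ell-v)_+$ on shrinking balls between $B_{6r}$ and $B_{4r}$, started from the measure estimate of Lemma \ref{lmn6.1}. As there, set $d_\epsilon=\frac12\sup_{s\in\Sigma}(r/R)^{\frac{sp}{p-1}}\operatorname{Tail}(u_-;x_0,R)+\epsilon$ and $v=u+d_\epsilon$; then $v$ is a weak supersolution, since constants do not change the operator. Without loss of generality $\delta\zeta\ge 2d_\epsilon$, since otherwise \eqref{eq6.19} follows from $u\ge0$ in $B_R$. With $\delta$ to be fixed, introduce for $i\ge0$ the radii $\rho_i=4r+2^{1-i}r$, the balls $B^i=B_{\rho_i}(x_0)$, the levels $\ell_i=\delta\zeta+2^{-i}\delta\zeta$, and
\[
\phi_i=(\ell_i-u)_+,\qquad A_i=\frac{|B^i\cap\{u\le \ell_i\}|}{|B^i|}.
\]
Then $\rho_0=6r$, $\ell_0=2\delta\zeta$, $\rho_i\downarrow4r$ and $\ell_i\downarrow\delta\zeta$. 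By Lemma \ref{lmn6.1} (applied with $\epsilon$ small and using $\delta\zeta\ge 2d_\epsilon$ to absorb the shift $-d_\epsilon$ in \eqref{eq6.2} after halving the level if necessary), $A_0\le C/(\sigma\log\frac{1}{2\delta})$.

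For the iteration step I will reproduce Step 2 of the proof of Lemma \ref{lmn5.1}. Take cutoffs $w_i\in C_c^\infty(\bar B^i)$ with $w_i=1$ on $B^{i+1}$ and $|\nabla w_i|\le C2^i/r$. Apply the Sobolev inequality \eqref{sob} to $\phi_iw_i$, then Lemma \ref{lmn3.2} (the Caccioppoli inequality for $(u-k)_-$ of supersolutions). This gives
\[
(\ell_i-\ell_{i+1})^pA_{i+1}^{1/\eta}\le C r^p\fint_{B^i}|\nabla(\phi_iw_i)|^p\,dx .
\]
The local gradient term and the interior nonlocal term are handled exactly as $S_1,Q_1,Q_3$ there. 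Using $\phi_i\le 2\delta\zeta$ and $r\le1$ (so that $r^{p-sp}\le1$), each is bounded by $C2^{ip}(\delta\zeta)^pA_i$.

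The main obstacle is the tail term $Q_2$, because $u\ge0$ holds only in $B_R$. In $B_R\setminus B^i$ we have $\phi_i\le\ell_i\le 2\delta\zeta$. Outside $B_R$ we use $\phi_i\le \ell_i+u_-$. Combining this with $|y-x|\ge 2^{-i-4}|y-x_0|$ for $x\in\operatorname{supp}w_i$, the term $r^p\cdot\operatorname{ess\,sup}\int\phi_i^{p-1}|x-y|^{-N-sp}\,dy$ is bounded by
\[
C2^{i(N+p)}\Bigl[(\delta\zeta)^{p-1}+\sup_{s\in\Sigma}(r/R)^{sp}\operatorname{Tail}(u_-;x_0,R)^{p-1}\Bigr]\le C2^{i(N+p)}(\delta\zeta)^{p-1}.
\]
The last inequality is exactly where the reduction $\delta\zeta\ge 2d_\epsilon$ and the factor $\sup_s(r/R)^{sp/(p-1)}$ in $d_\epsilon$ are used. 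Multiplying by $\int_{B^i}\phi_iw_i^p\le 2\delta\zeta|B^i|A_i$ gives the same bound $C2^{ci}(\delta\zeta)^pA_i$. This yields
\[
A_{i+1}\le C_1C_2^iA_i^{\eta},
\]
where $C_1,C_2$ depend only on $N,p,\mu,\Sigma$ and, crucially, not on $\delta$, because every term scales like $(\delta\zeta)^p$.

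Finally, choose $\delta\in(0,\frac14)$ so small that $C/(\sigma\log\frac1{2\delta})\le C_1^{-1/(\eta-1)}C_2^{-1/(\eta-1)^2}$. Lemma \ref{MI Lemma} then gives $A_i\to0$, so $u\ge\delta\zeta$ a.e.\ in $B_{4r}$. Together with the trivial case, this gives \eqref{eq6.19}. The constant $\delta$ depends on $\sigma$ as well as on $N,p,\mu,\Sigma$.
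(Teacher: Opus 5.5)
Your proposal is correct and follows essentially the same route as the paper. Both arguments reduce to the case $\delta\zeta\ge 2d_\epsilon$ (the paper proves the bound with an extra $-2\epsilon$ and lets $\epsilon\to0$), then combine the Caccioppoli inequality for $(\ell-u)_+$, the Sobolev inequality, and a De Giorgi iteration on radii $4r+2^{1-i}r$ with levels decreasing to $\delta\zeta$. In both, Lemma~\ref{lmn6.1} gives the initial smallness and Lemma~\ref{MI Lemma} closes the iteration, with $\delta$ chosen depending on $\sigma$.

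The only point to tidy is the starting level. The paper takes $\zeta_0=\tfrac32\delta\zeta\le 2\delta\zeta-d_\epsilon$, so Lemma~\ref{lmn6.1} applies directly. With your $\ell_0=2\delta\zeta$, you should apply Lemma~\ref{lmn6.1} with $2\delta$ in place of $\delta$, which requires $\delta<\tfrac18$; this is what your remark about halving the level amounts to.
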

\begin{proof} First, we show that for every $\epsilon>0$, there exists a constant $\delta \in \bigg(0, \frac{1}{4}\bigg)$ such that
\begin{align}\label{eq6.20}
    \essi_{B_{4r}(x_0)} u \geq \delta \zeta -\sup_{s\in \Sigma}\bigg(\frac{r}{R}\bigg)^{\frac{sp}{p-1}}  \operatorname{Tail}\big(u_-;x_0,R\big)-2\epsilon.
\end{align}
Therefore, by passing to the limit in \eqref{eq6.20}, we obtain \eqref{eq6.19}. To prove \eqref{eq6.20}, let us suppose that 
  \begin{align}\label{eq6}
  \delta \zeta -\sup_{s\in \Sigma}\bigg(\frac{r}{R}\bigg)^{\frac{sp}{p-1}}  \operatorname{Tail}\big(u_-;x_0,R\big) -2\epsilon \geq0.
  \end{align}
 Otherwise, \eqref{eq6.19} holds trivially, since $u\geq 0$ in $B_R(x_0)$.
  \par For any $\gamma \in [r,6r]$, let $w\in C_c^\infty(B_\gamma(x_0))$ be a cutoff function satisfying $0\leq w\leq 1$. Let $l\in (\delta \zeta, 2 \delta \zeta )$, and choose the test function $\psi=vw^p$, where $v:=(l-u)+=(u-l)-$. Then we get
  \begin{align*}
      &0\leq \int_{(0,1)}C_{N, s, p}\left(\iint_{B_\gamma(x_0)\times B_\gamma(x_0)} {{ \mathcal{A}(u(x,y))(v(x)w^p(x)-v(y)w^p(y))}}\d \nu \right) \d \mu(s)\nonumber\\
& \quad+ 2\int_{(0,1)}C_{N, s, p}\left(\int_{\mathbb{R}^{N}\setminus B_\gamma(x_0)}\int_{B_\gamma(x_0)} { \mathcal{A}(u(x,y))v(x)w^p(x)}\d \nu \right) \d \mu(s)\nonumber\\
& \quad +\alpha \int_{B_\rho(x_0)} |\nabla u(x)|^{p-2}\nabla u(x) \cdot \nabla (v(x)(w^p(x)) \d x:=A_1+2A_2+\alpha A_3.
  \end{align*}
We will estimate only $A_2$ and the estimates for $A_1$ and $A_3$ can be obtained following similar arguments as in the proof of the Caccioppoli inequality in Lemma $3.3$.

\noindent \textbf{Estimate of $A_2$:} We have
            \begin{align}\label{eq6.22}
                A_2= &\int_{(0,1)}C_{N, s, p}\left(\int_{\mathbb{R}^N\setminus {B_{\gamma}(x_0)} \cap \{u(y)<0\}}\int_{B_{\gamma}(x_0)} {{ \mathcal{A}(u(x,y))(v(x)w^p(x))}}\d \nu\right) \d \mu(s)\nonumber\\
                &+ \int_{(0,1)}C_{N, s, p} \left(\int_{\mathbb{R}^N\setminus {B_{\gamma}(x_0)}\cap \{u(y)\geq 0\}}\int_{B_{\gamma}(x_0)} {{\mathcal{A}(u(x,y))(v(x)w^p(x))}}\d \nu\right) \d \mu(s)\nonumber\\
                =&A_2'+A_2''.
            \end{align}
\noindent \textbf{Estimate of $A_2'$:} Using the definition of $v$ and properties of $w$, we get
    \begin{align}\label{eq6.23}
        A_2'\leq &\int_{(0,1)}C_{N, s, p}\left(l\int_{\mathbb{R}^N\setminus {B_{\gamma}(x_0)} \cap \{u(y)<0\}}\int_{B_{\gamma}(x_0)\cap \{u<l\}} \frac{{(l+u_-(y))^{p-1}
        w^p(x)}{}}{|x-y|^{N+s p}} \d x \d y\right) \d \mu(s) \nonumber \\
        \leq & \int_{(0,1)} C_{N, s, p}l\left(\ess_{x \in \supp w}\int_{\mathbb{R}^N\setminus {B_{\gamma}(x_0)} }\frac{{(l+u_-(y))^{p-1}}{}}{|x-y|^{N+s p}} \d y \right) \nonumber \\
        &\times |B_{\gamma}(x_0)\cap \{u<l\}| \d \mu(s).
    \end{align}
    \noindent \textbf{Estimate of $A_2''$:} Similarly, using the definition of $v$ and properties of $w$, we obtain
    \begin{align} \label{eq6.24}
        A_2''\leq & \int_{(0,1)}C_{N, s, p}\left( l^{p-1}\int_{\mathbb{R}^N\setminus {B_{\gamma}(x_0)}\cap \{u(y)\geq 0\}}\int_{B_{\gamma}(x_0)\cap \{u<l\}} \frac{{}{(v(x)w^p(x))}}{|x-y|^{N+s p}} \d x \d y\right) \d \mu(s)\nonumber\\
        \leq & \int_{(0,1)}C_{N, s, p} l^{p} \left(\mathop{\mathrm{\ess}}_{x \in \supp w}\int_{\mathbb{R}^N\setminus {B_{\gamma}(x_0)}\}} \frac{{1}{}}{|x-y|^{N+s p}}  \d y\right) |B_{\gamma}(x_0)\cap \{u<l\}| \d \mu(s).
    \end{align}
    Therefore, using \eqref{eq6.22}, \eqref{eq6.23}, \eqref{eq6.24} and considering the proof of the Caccioppoli inequality in Lemma \ref{lmn3.2} for the estimating of $A_1$ and $A_3$, we deduce that
    \begin{align}\label{eq6.25}
        &\alpha \int_{B_\gamma(x_0)} w^p|\nabla v|^p \d x+\int_{(0,1)} C_{N,s,p} \left( \iint_{B_\gamma(x_0)\times B_\gamma(x_0)} \frac{|v(x) w(x)-v(y)w(y)|^p}{|x-y|^{N+sp}}\d x \d y\right) \d \mu(s)
     \nonumber\\
    \leq &  C \Bigg[ \int_{(0,1)} C_{N,s,p} \left( \iint_{B_\gamma(x_0)\times B_\gamma(x_0)} \frac{ \max\{v(x), v(y)\}^p|w(x)-w(y)|^p}{|x-y|^{N+sp}}\d x\d y\right)\d \mu(s) \nonumber \\
  & +\int_{(0,1)} C_{N, s, p} l\left(\ess_{x \in \supp w}\int_{\mathbb{R}^N\setminus {B_{\gamma}(x_0)} }\frac{{(l+u_-(y))^{p-1}}{}}{|x-y|^{N+s p}} \d y \right) |B_{\gamma}(x_0)\cap \{u<l\}| \d \mu(s) \nonumber\\
  &+\alpha \int_{B_\gamma(x_0)} v^p|\nabla w|^p \d x \Bigg].
    \end{align}
     For $j\in \mathbb{N}\cup \{0\}$, we set
    \begin{align*}
        l=\zeta_j=\delta \zeta+2^{-(j+1)} \delta \zeta,~ \quad \gamma=\gamma_j=4r+2^{1-j}r,~ \quad \bar{\gamma_j}=\frac{\gamma_j+\gamma_{j+1}}{2}.
    \end{align*}
    Then we have $l\in (\delta \zeta, 2\delta \zeta)$, {$\gamma_j \in (4r,6r]$}, $\bar{\gamma_j} \in (4r,6r)$ and 
    \begin{align*}
        \zeta_j-\zeta_{j+1}=2^{-(j+2)}\delta \zeta \geq 2^{-(j+3)}\zeta_j.
    \end{align*}
    We denote $B_j=B_{\gamma_j}(x_0)$, $\bar{B_i}=B_{\bar{\gamma_j}}(x_0)$. Also, note that 
    \begin{align*}
        v_j=(\zeta_j-u)_+\geq  2^{-(j+3)}\zeta_j \chi_{\{u<\zeta_{j+1}\}},
    \end{align*}
    where $\chi_A(x)$ is the characteristic function of a set. Furthermore, $\zeta_j$, $\gamma_j$ are monotonically decreasing and $\gamma_{j+1}<\bar{\gamma_j}<\gamma_j$. Let $w_j \in C_c^\infty(\bar{B_j})$ be a sequence of functions such that $w_i=1$ in $B_{j+1}$ and $0\leq w_i\leq 1$ with $|\nabla w_j|\leq \frac{2^{j+3}}{r}$ in $\bar{B_j}$. Choosing $l=\zeta_j$, $w=w_j$ and $v=v_j$ in $\eqref{eq6.25}$, we obtain
    \begin{align}\label{eq6.29}
         &\alpha \int_{B_{\gamma_j}(x_0)} w_j^p|\nabla v_j|^p \d x \leq  C \Bigg[ \alpha \int_{B_{\gamma_j}(x_0)} v_j^p|\nabla w_j|^p \d x \nonumber \\
     & + \int_{(0,1)} C_{N,s,p} \left( \iint_{B_{\gamma_j}(x_0)\times B_{\gamma_j}(x_0)} \frac{ \max\{v_j(x), v_j(y)\}^p|w_j(x)-w_j(y)|^p}{|x-y|^{N+sp}}\d x\d y\right)\d \mu(s) \nonumber \\
  & +\int_{(0,1)}C_{N, s, p}\zeta_j\left(\ess_{x \in \supp w_j}\int_{\mathbb{R}^N\setminus {B_{\gamma_j}(x_0)} }\frac{{ (\zeta_j+u_-(y))^{p-1}}{}}{|x-y|^{N+s p}} \d y \right) \d \mu(s)  \nonumber \\
  &\times|B_{\gamma_j}(x_0)\cap \{u<\zeta_j\}| \Bigg] :=\alpha B_1+B_2+ B_3.
    \end{align}
We now use Lemma \ref{MI Lemma} to prove our result.

\noindent \textbf{Estimate of $B_1$:} Using the properties of $w_j$ and definition of $v_j$, we get
 \begin{align}\label{eq6.30}
     B_1\leq \int_{B_{\gamma_j}(x_0)\cap \{u<\zeta_j\}} v_j^p 2^{(j+3)p}r^{-p} \d x \leq C 2^{jp} r^{-p} \zeta_j^p|B_{\gamma_j}(x_0)\cap \{u<\zeta_j\}|.
 \end{align}                       
\noindent \textbf{Estimate of $B_2$:} Similar arguments of \textbf{Estimate of $I_1$} in Lemma \ref{bddness} gives us
\begin{align}\label{eq6.31}
    B_2 \leq& \zeta_j^p 2^{(j+3)p} r^{-p}\int_{(0,1)} C_{N,s,p} \bigg( \iint_{B_{\gamma_j}(x_0)\times B_{\gamma_j}(x_0) \cap \{u<\zeta_j\}} |x-y|^{-N-sp+p} \d x \d y\bigg) \d \mu(s) \nonumber \\
    \leq & C \zeta_j^p 2^{jp} \sup_{s\in \Sigma}r^{-sp} |B_{\gamma_j}(x_0)\cap \{u<\zeta_j\}| \leq C  2^{jp} r^{-p} \zeta_j^p|B_{\gamma_j}(x_0)\cap \{u<\zeta_j\}|.
\end{align}
\noindent \textbf{Estimate of $B_3$:} Observe that for all $x\in \supp w_j=\bar{B_j}$ and for all $y\in \mathbb{R}^N\setminus B_j$, we have
\begin{align}\label{eq6.32}
    \frac{|y-x_0|}{|y-x|}\leq \frac{|y-x|+|x-x_0|}{|y-x|}\leq 1+\frac{\bar{\gamma_j}}{\gamma_j-\bar{\gamma_j}} \leq { 2^{j+5}}.
\end{align}
Thus using the convexity property and \eqref{eq6.32}, we get
    \begin{align}\label{eq6.33}
        B_3\leq & \zeta_j |B_{\gamma_j}(x_0)\cap \{u<\zeta_j\}| \int_{(0,1)}C_{N, s, p}\left(2^{(j+5)(N+sp)}\int_{\mathbb{R}^N\setminus {B_{\gamma_j}(x_0)} }\frac{{ (\zeta_j+u_-(y))^{p-1}}{}}{|y-x_0|^{N+s p}} \d y \right) \d \mu(s)\nonumber \\
       \leq & \zeta_j |B_{\gamma_j}(x_0)\cap \{u<\zeta_j\}| \int_{(0,1)}C_{N, s, p}\bigg(2^{(j+5)(N+sp)}  \bigg\{ \zeta_j^{p-1} {\frac{r^{-sp}}{sp} }\nonumber \\
       &+r^{-sp} \bigg(\frac{r}{R}\bigg)^{sp}R^{sp} \int_{\mathbb{R}^N\setminus {B_{R}(x_0)}} \frac{u_-(y))^{p-1}}{|y-x_0|^{N+s p}} \bigg\} \d \mu(s) \nonumber \\
       \leq &C\sup_{s\in \Sigma}2^{j(N+sp)} \zeta_j |B_{\gamma_j}(x_0)\cap \{u<\zeta_j\}| r^{-p}\bigg(\zeta_j^{p-1}+\sup_{s\in \Sigma} \bigg(\frac{r}{R}\bigg)^{sp}[\operatorname{Tail}(u_-;x_0,R)]^{p-1}\bigg)\nonumber \\
       \leq & C\sup_{s\in \Sigma}2^{j(N+sp)} \zeta_j^{p} |B_{\gamma_j}(x_0)\cap \{u<\zeta_j\}| r^{-p}.
    \end{align}
    By using \eqref{eq6.30}, \eqref{eq6.31} and \eqref{eq6.33} in \eqref{eq6.29}, we get 
    \begin{align}\label{eq6.34}
        \int_{B_{\gamma_j}(x_0)} w_j^p|\nabla v_j|^p \d x \leq  C \sup_{s\in \Sigma}2^{j(N+p+sp)} \zeta_j^{p} |B_{\gamma_j}(x_0)\cap \{u<\zeta_j\}| r^{-p}.
    \end{align}
    Again using convexity property, Sobolev inequality \eqref{sob}, \eqref{eq6.30} and \eqref{eq6.34}, we obtain
    \begin{align}\label{eq6.35}
         (\zeta_j-\zeta_{j+1})^p\left( \frac{|B_{{j+1}}(x_0)\cap \{u<\zeta_{j+1}\}|}{|B_{j+1}|}\right)^\frac{1}{\eta} =& \frac{1}{|B_{j+1}|^\frac{1}{\eta}} \left( \int_{B_{{j+1}}(x_0)\cap \{u<\zeta_{j+1}\}} (\zeta_j-\zeta_{j+1})^{p\eta} \d x \right)^\frac{1}{\eta}\nonumber\\
         \leq & \left( \fint_{B_{j+1}}v_j^{\eta p} \d x\right)^\frac{1}{\eta}=\left( \fint_{B_{j+1}}v_j^{\eta p} w_j^{\eta p}\d x\right)^\frac{1}{\eta} \nonumber \\
          \leq & C \left( \fint_{B_{j}}v_j^{\eta p} w_j^{\eta p}\d x\right)^\frac{1}{\eta} \nonumber \\
           \leq & C \gamma_j^p \fint_{B_{j}} |\nabla(v_j w_j)|^p \d x  \nonumber \\
          \leq & C r^p  \left( \fint_{{B_{j}}} {v_j}^p|\nabla w_j|^p \d x+ \fint_{{B^{j}}} w_j^p |\nabla{v_j}|^p \d x\right) \nonumber \\
          \leq &C\sup_{s\in \Sigma}2^{j(N+p+sp)} \zeta_j^{p} \frac{|B_{j}(x_0)\cap \{u<\zeta_j\}|}{|B_{j}(x_0)|}.
    \end{align}
   We set
    \begin{align}\label{eq6.36}
        P_j:=\frac{|B_{j}(x_0)\cap \{u<\zeta_j\}|}{|B_{j}(x_0)|}~~\text{for}~~j=0,1,2....
    \end{align}
   Thus from \eqref{eq6.35} and \eqref{eq6.36}, we have
    \begin{align*}
        P_{j+1} \leq C  2^{j\eta \sup_{s\in \Sigma}(N+2p+sp)}  P_{j}^\eta .
     \end{align*}
     Choose $C_1=C$, $C_2= 2^{\eta \sup_{s\in \Sigma}(N+2p+sp)}>1$ and $\beta+1=\eta$ in Lemma \ref{MI Lemma}. Therefore, from \eqref{eq6}, we get
     \begin{align}\label{eq6.39}
         \zeta_0= \frac{3}{2} \delta \zeta=2\delta \zeta -\frac{1}{2}\delta \zeta \leq 2\delta \zeta -\frac{1}{2}\sup_{s\in \Sigma}\bigg(\frac{r}{R}\bigg)^{\frac{sp}{p-1}}  \operatorname{Tail}\big(u_-;x_0,R\big) -\epsilon.
     \end{align}
     Using \eqref{eq6.39} and Lemma \ref{lmn6.1}, we get
     \begin{align}\label{eq6.40}
         P_0=\frac{\bigg|B_{6r}(x_0)\cap \{2\delta \zeta -\frac{1}{2}\sup_{s\in \Sigma}\bigg(\frac{r}{R}\bigg)^{\frac{sp}{p-1}}  \operatorname{Tail}\big(u_-;x_0,R\big) -\epsilon\}\bigg|}{|B_{6r}(x_0)|} \leq  \frac{C}{\sigma \log \frac{1}{2\delta}}
     \end{align}
     for all $\delta \in \bigg(0,\frac{1}{4}\bigg)$.
     We now choose 
     \begin{align}\label{eq6.41}
         0<\delta =\frac{1}{4} e^{\bigg(-\frac{CC_1^{\frac{1}{\beta}}C_2^{\frac{1}{\beta^2}}}{\sigma} \bigg)}<\frac{1}{4}.
     \end{align}
     Thus from \eqref{eq6.40} and \eqref{eq6.41}, we conclude that
     $$P_0 \leq C_1^{-\frac{1}{\beta}}C_2^{-\frac{1}{\beta^2}}.$$
     From Lemma \ref{MI Lemma}, we obtain $P_j \rightarrow 0$ as $j \rightarrow \infty$.
     Therefore, we get { $$\essi_{B_{4r}(x_0)} u \geq \delta \zeta.$$}
     This completes the proof.
\end{proof}
\subsection{Preliminary Version of Harnack Inequality}
In this subsection, we prove a preliminary version of the Harnack inequality using the Krylov-Safonov covering lemma \cite[Lemma $7.2$]{KS2001}. We first recall the Krylov-Safonov covering lemma from \cite[Lemma $2.5$]{DKP2014}.

\begin{lem}[Lemma $2.5$,\cite{DKP2014}]\label{lmn6.30}
    Let $A\subset B_r(x_0)$ be a measurable set with $r\in (0,1]$ and $\delta'\in(0,1)$. Set
    \begin{align*}
        A_{\delta'}:=\bigcup_{\gamma>0}\bigg\{B_{3\gamma}(x)\cap B_r(x_0): x\in B_r(x_0) ~\&~ |A\cap B_{3\gamma}(x)| >{\delta'}|B_{\gamma}(x)|\bigg\}.
    \end{align*}
    Then either $|A_{\delta'}| \geq \frac{C}{\delta'}|A|$ or $A_{\delta'}=B_r(x_0)$,
    where $C=C(N)$ is a constant such that $C\in (0,1]$.
\end{lem}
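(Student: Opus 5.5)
The strategy is a Vitali covering argument, using Lebesgue's differentiation theorem to reach almost every point of $A$. Throughout I assume $A_{\delta'}\neq B_r(x_0)$ and aim to prove $|A|\le C(N)^{-1}\delta'|A_{\delta'}|$.

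\textbf{Step 1 (reduction to density points).} By Lebesgue's differentiation theorem, almost every $x\in A$ satisfies $|A\cap B_{3\gamma}(x)|/|B_{3\gamma}(x)|\to 1$ as $\gamma\to0$. Since $|B_{3\gamma}(x)|=3^N|B_\gamma(x)|$ and $\delta'<1$, the defining condition $|A\cap B_{3\gamma}(x)|>\delta'|B_\gamma(x)|$ then holds for every small $\gamma>0$. For such $x$ set
\[
\gamma_x:=\sup\{\gamma>0:\ |A\cap B_{3\gamma}(x)|>\delta'|B_\gamma(x)|\}.
\]

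\textbf{Step 2 ($\gamma_x$ is finite and controlled).} Suppose the condition held for some $\gamma$ with $3\gamma\ge 2r$. Since $x\in B_r(x_0)$, we would have $B_{3\gamma}(x)\supset B_r(x_0)$, and so $A_{\delta'}\supset B_{3\gamma}(x)\cap B_r(x_0)=B_r(x_0)$. This contradicts our assumption, so $\gamma_x\le 2r/3<r$. By the definition of the supremum, every $\gamma>\gamma_x$ satisfies $|A\cap B_{3\gamma}(x)|\le\delta'|B_\gamma(x)|$. Next choose an admissible radius $\gamma(x)\in(\gamma_x/2,\gamma_x]$, meaning one for which the strict inequality holds. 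Then $5\gamma(x)>\gamma_x$, and hence
\[
|A\cap B_{15\gamma(x)}(x)|\le\delta'|B_{5\gamma(x)}(x)|=5^N\delta'|B_{\gamma(x)}(x)|.
\]

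\textbf{Step 3 (covering).} The balls $\{B_{3\gamma(x)}(x)\}$ have radii bounded by $2r$. Vitali's covering lemma therefore gives a countable disjoint subfamily $B_{3\gamma_i}(x_i)$ such that, up to a null set, $A\subset\bigcup_i B_{15\gamma_i}(x_i)$. Using Step 2,
\[
|A|\le\sum_i|A\cap B_{15\gamma_i}(x_i)|\le 5^N\delta'\sum_i|B_{\gamma_i}(x_i)|.
\]
Each set $B_{3\gamma_i}(x_i)\cap B_r(x_0)$ lies in $A_{\delta'}$, because $\gamma_i$ is admissible, and these sets are pairwise disjoint. Since $x_i\in B_r(x_0)$ and $\gamma_i<r$, the intersection $B_{\gamma_i}(x_i)\cap B_r(x_0)$ contains a ball of radius $\gamma_i/2$, so its measure is at least $2^{-N}|B_{\gamma_i}|$. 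Summing, $\sum_i|B_{\gamma_i}|\le 2^N|A_{\delta'}|$, and therefore $|A|\le 10^N\delta'|A_{\delta'}|$. This is the claim with $C=10^{-N}\in(0,1]$.

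\textbf{Main obstacle.} The delicate point is Step 2. We need a radius that is admissible, so that its ball lies in $A_{\delta'}$, and is also nearly maximal, so that the enlarged ball is sparse in $A$. Choosing $\gamma_i>\gamma_x/2$ together with the Vitali enlargement factor $5$ reconciles the two requirements. Finiteness of $\gamma_x$ is exactly where the alternative $A_{\delta'}=B_r(x_0)$ enters.
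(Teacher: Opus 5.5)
Your proof is correct and is essentially the standard argument. The paper gives no proof of its own: it cites \cite[Lemma 2.5]{DKP2014}, which relies on the Vitali-covering proof of \cite[Lemma 7.2]{KS2001}, and you reproduce that proof (density points supply admissible radii, the alternative $A_{\delta'}\neq B_r(x_0)$ caps them at $2r/3$, a nearly maximal radius makes the $5$-fold enlarged ball sparse in $A$, and disjointness together with $|B_{\gamma_i}(x_i)\cap B_r(x_0)|\ge 2^{-N}|B_{\gamma_i}|$ gives $C=10^{-N}$).
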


The following lemma is about the preliminary version of Harnack inequality, which is proved using arguments similar to those in \cite[Lemma 4.1]{DKP2014}. For completeness, we provide a brief sketch of the proof.
\begin{lem}\label{lmn6.4}
Let $u$ be a weak supersolution of \eqref{MP} with $u\geq 0$ in $B_R(x_0)\subset \Omega$. Then there exist positive constants $\kappa:=\kappa(N,p,\mu,\Sigma) \in(0,1)$ and $C:=C(N,p,\mu,\Sigma)\in [1, \infty)$ such that
    \begin{align*}
         \bigg(\fint_{B_{{r}}(x_0)}u^\kappa \d x \bigg)^{\frac{1}{\kappa}} \leq C \essi_{B_r(x_0)}u +C{\sup_{s\in \Sigma}\bigg(\frac{r}{R}\bigg)^{\frac{sp}{p-1}}} \operatorname{Tail}\big(u_-;x_0,R\big),
    \end{align*}
    where $B_r(x_0) \subset B_R(x_0)$ with $r\in (0,1]$.
\end{lem}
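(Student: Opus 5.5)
The plan is to follow the Krylov--Safonov covering argument of \cite[Lemma 4.1]{DKP2014}, with the expansion of positivity in Lemma \ref{lmn6.2} as the key local ingredient.

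\textbf{Normalization.} Set
\[
T:=\sup_{s\in\Sigma}\Big(\frac{r}{R}\Big)^{\frac{sp}{p-1}}\operatorname{Tail}(u_-;x_0,R).
\]
Replacing $u$ by $u+T+\epsilon$, and letting $\epsilon\to0$ at the end, we may assume the infimum term is positive. We then aim to prove the level-set decay
\begin{equation*}
|\{u>t\}\cap B_r(x_0)|\le C\,t^{-\kappa_0}\big(\essi_{B_r(x_0)}u+T\big)^{\kappa_0}|B_r(x_0)|\quad\text{for all }t>0 .
\end{equation*}
The claimed inequality follows from this by the layer-cake formula with any $\kappa<\kappa_0$.

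\textbf{Level sets and the covering step.} For $i\ge0$, let $\delta\in(0,\frac14)$ be the constant from Lemma \ref{lmn6.2} and define
\[
A^i_t:=\{u>t\delta^i\}\cap B_r(x_0).
\]
The key step is the inclusion $(A^{i-1}_t)_{\delta'}\subset A^{i}_t$, up to an additive tail correction, for a fixed $\delta'\in(0,1)$.

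\begin{itemize}
\item Take $x\in B_r(x_0)$ and $\gamma>0$ with $|A^{i-1}_t\cap B_{3\gamma}(x)|>\delta'|B_\gamma(x)|$.
\item Then $|\{u\ge t\delta^{i-1}\}\cap B_{3\gamma}(x)|\ge \delta'3^{-N}|B_{3\gamma}(x)|$.
\item Apply Lemma \ref{lmn6.2} on $B_{3\gamma}(x)$, with $\sigma=\delta'3^{-N}$ and $\zeta=t\delta^{i-1}$, to get $u\ge t\delta^i-(\text{tail})$ on $B_{12\gamma}(x)\supset B_{3\gamma}(x)\cap B_r(x_0)$.
\end{itemize}

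Here $\delta$ depends on $\sigma$, hence only on the data once $\delta'$ is fixed. Lemma \ref{lmn6.2} must be applied with center $x$ and some outer radius $R'\sim R$ with $B_{R'}(x)\subset B_R(x_0)$; this requires the smallness condition $16\cdot3\gamma<R'$. The case where $\gamma$ is comparable to $r$ is handled separately, since then $B_{3\gamma}(x)\cap B_r(x_0)$ is essentially all of $B_r(x_0)$.

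\textbf{Main obstacle: the tail.} The hard part is controlling $\operatorname{Tail}(u_-;x,R')$ centered at $x$ by $\operatorname{Tail}(u_-;x_0,R)$.

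\begin{itemize}
\item Since $u\ge0$ in $B_R(x_0)$, $u_-$ vanishes there.
\item For $|y-x_0|\ge R$ and $x\in B_r(x_0)\subset B_{R/2}(x_0)$, we have $|y-x|\ge \frac12|y-x_0|$.
\item Hence the kernels are comparable uniformly in $s\in(0,1)$ (factor $2^{N+sp}\le 2^{N+p}$), and integrating in $d\mu(s)$ gives
\[
\sup_{s}\Big(\frac{\gamma}{R'}\Big)^{\frac{sp}{p-1}}\operatorname{Tail}(u_-;x,R')\le C\,T .
\]
\item The factor $\sup_s(\gamma/R)^{sp/(p-1)}\le\sup_s(r/R)^{sp/(p-1)}$ because $\gamma\lesssim r$.
\end{itemize}

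Thus $(A^{i-1}_t)_{\delta'}\subset \{u>t\delta^{i}-CT\}$. To absorb the correction, replace the levels by $t\delta^i-CT\sum_{k\le i}\delta^k$, or equivalently work with $\tilde u:=u+CT/(1-\delta)$.

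\textbf{Conclusion.} By Lemma \ref{lmn6.30}, for each $i$ either $|A^i_t|\ge \frac{C}{\delta'}|A^{i-1}_t|$, or $A^i_t=B_r(x_0)$.

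\begin{itemize}
\item Choose $\delta'$ small so that $\bar c:=C/\delta'>1$.
\item If $|A^0_t|>\bar c^{-m}|B_r|$, iterating $m$ times gives $A^m_t=B_r(x_0)$, i.e. $\essi_{B_r}u\ge t\delta^m-CT$.
\item This yields $|A^0_t|\le \bar c^{\,1-\log((\essi u+CT)/t)/\log\delta}|B_r|$.
\end{itemize}

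This is the desired power decay with $\kappa_0=\log\bar c/\log(1/\delta)$. Integrating $\int_0^\infty \kappa t^{\kappa-1}|A^0_t|\,dt$ with $\kappa<\kappa_0$, splitting at $t=\essi u+CT$, then completes the proof.
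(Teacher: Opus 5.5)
Your proposal is the same Krylov--Safonov argument the paper uses. The paper defines $E^i_t=\{u>\epsilon\delta^i-\frac{F}{1-\delta}\}$ with $\delta$ from Lemma~\ref{lmn6.2}, then defers to \cite[Lemma~4.1]{DKP2014} and Lemma~\ref{lmn6.30}; your shifted levels (consistent because $\delta\frac{CT}{1-\delta}+CT=\frac{CT}{1-\delta}$) and your re-centering of the superposition tail from $x$ to $x_0$ supply exactly the details that sketch omits.

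One point needs making explicit, and the paper's sketch also leaves it implicit: the range of $r$. Lemma~\ref{lmn6.2} needs $u\ge 0$ on a ball sixteen times larger than the one where the density is measured. So your argument proves the estimate only for $r\le R/64$, say, using the split at $\gamma\ge\frac{2r}{3}$: there $B_{3\gamma}(x)\supset B_r(x_0)$, so Lemma~\ref{lmn6.2} is applied on $B_r(x_0)$ itself. It does not cover all $B_r(x_0)\subset B_R(x_0)$, and at $r=R$ the inequality fails in general. The range $r\le R/2$ needed in the weak Harnack theorem requires one more step: cover $B_r(x_0)$ by boundedly many small balls and chain Lemma~\ref{lmn6.2} between them.
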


\begin{proof}
 For $\epsilon>0$ and $i\in \mathcal{N}\cup \{0\}$, we define
 \begin{align*}
     E^i_t:=\bigg\{x\in B_r(x_0):u(x)>\epsilon \delta^i -\frac{F}{1-\delta} \bigg\},
 \end{align*}
 where $\delta$ is given in Lemma \ref{lmn6.2} and
 \begin{align*}
     F:=\sup_{s\in \Sigma}\bigg(\frac{r}{R}\bigg)^{\frac{sp}{p-1}}\operatorname{Tail}\big(u_-;x_0,R\big).
 \end{align*}
 The remainder of the proof follows the same argument as in \cite[Lemma 4.1]{DKP2014}, together with Lemma \ref{lmn6.30}. This completes the proof.
\end{proof}
We next prove the following Caccioppoli-type estimate.
\begin{lem}\label{lmn7.1}
    Let $p\in(1, \infty)$, $d>0$ and $q\in(1,p)$. Let $u$ be a weak supersolution of \eqref{MP} such that $u\geq 0$ in $B_R(x_0)\subset \Omega$ and $\phi=(u+d)^\frac{p-q}{p}$. Then there exists a constant {$C:=C(p,q,N, \Sigma,\mu)>0$} such that
    \begin{align*}
       \alpha & \int_{B_r(x_0)} w^p|\nabla \phi|^p \d x \leq C\bigg[\alpha \int_{B_r(x_0)} \phi^p|\nabla w|^p \d x \nonumber \\
       &+ \int_{(0,1)} C_{N,s,p} \left( \iint_{B_r(x_0)\times B_r(x_0)} \frac{ \max\{\phi(x), \phi(y)\}^p|w(x)-w(y)|^p}{|x-y|^{N+sp}}\d x\d y\right)\d \mu \bigg] \nonumber \\
       &+C \bigg[ \int_{(0,1)} C_{N,s,p} \left( \ess_{x\in \supp w} \int_{\mathbb{R}^N\setminus B_r(x_0)}\frac{1}{|x-y|^{N+sp}} \d y \right) \d \mu \nonumber \\
       & + d^{1-p} \sup_{s\in \Sigma}\bigg(\frac{r}{R}\bigg)^{sp}\sup_{s\in \Sigma}r^{-sp}[\operatorname{Tail}\big(u_-(y);x_0,R\big)]^{p-1} \bigg]\bigg(\int_{B_r(x_0)}\phi^p w^p \d x \bigg)
    \end{align*}
    for any $B_r(x_0)\subset B_{\frac{3R}{4}}(x_0)$ and nonnegative function $w\in C_c^\infty(B_r(x_0))$.
\end{lem}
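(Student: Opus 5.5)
We follow the scheme of \cite[Lemma 5.1]{DKP2014} for the nonlocal part and of the classical Moser argument for the local part. Set $\bar u:=u+d$, which is again a weak supersolution and satisfies $\bar u\geq d>0$ in $B_R(x_0)$. We test the supersolution inequality with $\psi=\bar u^{1-q}w^p$; since $q>1$, $d>0$ and $w\in C_c^\infty(B_r(x_0))$, we have $\psi\in W^{1,p}_0(B_r(x_0))$ and $\psi\geq 0$. As in the proof of Theorem \ref{lmn3.1}, the nonlocal integral over $\mathbb{R}^{2N}$ splits into the part $J_1$ over $B_r\times B_r$ and twice the cross part $J_2$ over $(\mathbb{R}^N\setminus B_r)\times B_r$. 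The local part is $\alpha J_3$. The supersolution inequality then reads $0\leq J_1+2J_2+\alpha J_3$.

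The local term is a direct computation. We have
\[
\nabla\psi=(1-q)\bar u^{-q}\nabla u\,w^p+p\bar u^{1-q}w^{p-1}\nabla w .
\]
Since $\nabla\phi=\frac{p-q}{p}\bar u^{-q/p}\nabla u$, it follows that $|\nabla u|^p\bar u^{-q}=\big(\frac{p}{p-q}\big)^p|\nabla\phi|^p$. Hence
\[
J_3\le -(q-1)\Big(\tfrac{p}{p-q}\Big)^p\int w^p|\nabla\phi|^p+p\int \bar u^{1-q}|\nabla u|^{p-1}w^{p-1}|\nabla w| .
\]
We apply Young's inequality to the last term, splitting $\bar u^{1-q}=\bar u^{-q(p-1)/p}\bar u^{(p-q)/p}$. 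This absorbs half of the negative term and leaves $C\int\phi^p|\nabla w|^p$. The result is $\alpha J_3\le -c\,\alpha\int w^p|\nabla\phi|^p+C\alpha\int\phi^p|\nabla w|^p$ with $c,C$ depending on $p,q$.

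For the nonlocal diagonal term $J_1$, we use the pointwise algebraic inequality of \cite[Lemma 5.1]{DKP2014}, in the form used in \cite{AGKR2025i}. For $a,b>0$ and $q\in(1,p)$,
\[
|a-b|^{p-2}(a-b)\big(a^{1-q}w_1^p-b^{1-q}w_2^p\big)\le -c\,\big|a^{\frac{p-q}{p}}w_1-b^{\frac{p-q}{p}}w_2\big|^p+C\max\{a^{\frac{p-q}{p}},b^{\frac{p-q}{p}}\}^p|w_1-w_2|^p .
\]
Applying this with $a=\bar u(x)$, $b=\bar u(y)$ gives
\[
J_1\le C\int_{(0,1)}C_{N,s,p}\iint_{B_r\times B_r}\max\{\phi(x),\phi(y)\}^p|w(x)-w(y)|^p\,d\nu\,d\mu .
\]
We simply drop the good negative Gagliardo term, since only the gradient is kept on the left. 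This inequality is the step we expect to require the most care, because its constants must be uniform in $s$. They are, since the inequality is purely algebraic in $a,b,w_1,w_2$ and involves only $p$ and $q$.

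For the cross term $J_2$, we have $\bar u(x)>0$ on $B_r$. When $\bar u(y)\ge0$, the bound $(\bar u(x)-\bar u(y))_+^{p-1}\le \bar u(x)^{p-1}$ gives a contribution at most $\bar u(x)^{p-q}w^p(x)|x-y|^{-N-sp}=\phi^p(x)w^p(x)|x-y|^{-N-sp}$. When $\bar u(y)<0$, the point $y$ lies outside $B_R$, and $(\bar u(x)+u_-(y))^{p-1}\le C(\bar u(x)^{p-1}+u_-(y)^{p-1})$. For the second piece we use $\bar u(x)^{1-q}\le d^{1-p}\bar u(x)^{p-q}$, which holds because $\bar u\ge d$ and $q<p$. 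This yields $C d^{1-p}\int\phi^pw^p\cdot \sup_{x\in\supp w}\int_{\mathbb{R}^N\setminus B_R}u_-(y)^{p-1}|x-y|^{-N-sp}dy$. Since $B_r\subset B_{3R/4}$, we have $|y-x|\ge \frac14|y-x_0|$ for $x\in B_r$ and $y\notin B_R$. Multiplying and dividing by $R^{sp}$ then bounds the $\mu$-integral by
\[
C\sup_{s}(r/R)^{sp}\sup_s r^{-sp}[\operatorname{Tail}(u_-;x_0,R)]^{p-1},
\]
where $4^{N+sp}$ and $\mu((0,1))$ are absorbed into $C$.

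Finally, we collect $0\le J_1+2J_2+\alpha J_3$ and move the negative gradient term to the left. This gives the claimed estimate with $C=C(p,q,N,\Sigma,\mu)$.
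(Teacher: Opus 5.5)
Your proposal is correct and follows the paper's proof essentially step by step. It uses the same test function $(u+d)^{1-q}w^p$ and the same splitting $0\le J_1+2J_2+\alpha J_3$. The diagonal term is handled by the algebraic inequality of \cite[Lemma 5.1]{DKP2014}, and the cross term by $(u+d)^{1-q}\le d^{1-p}(u+d)^{p-q}$ together with $|x-y|\ge \frac14|y-x_0|$, which produces the superposition tail. The local term is treated by Young's inequality, absorbing half of the good gradient term, exactly as in the paper.
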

\begin{proof}
    Let $v=u+d$ and $q\in [1+\epsilon,p-\epsilon]$ for $d>0$ and sufficiently small $\epsilon>0$. Then $v$ is a weak supersolution of \eqref{MP}. By choosing $\psi=v^{1-q}w^p$ as a test function, we get
    \begin{align}
        0 \leq& \int_{(0,1)}C_{N, s, p}\left(\iint_{\mathbb{R}^{2 N}} {{ \mathcal{A}(v(x,y))(v(x)^{1-q}w^p(x)-v(y)^{1-q}w^p(y))}}\d \nu \right) \d \mu(s)\nonumber\\
       & +\alpha \int_{\Omega} |\nabla v(x)|^{p-2}\nabla v(x) \cdot \nabla (v^{1-q}(x)w^p(x)) \d x \nonumber \\
        =& \int_{(0,1)}C_{N, s, p} \left(\iint_{B_r(x_0)\times B_r(x_0)} {{\mathcal{A}(v(x,y))(v^{1-q}(x)w^p(x)-v^{1-q}(y)w^p(y))}}\d \nu\right) \d \mu(s)\nonumber\\
        &+2 \int_{(0,1)}C_{N, s, p}\left(\int_{\mathbb{R}^N\setminus {B_r(x_0)}}\int_{B_r(x_0)} {{\mathcal{A}(v(x,y))(v^{1-q}(x)w^p(x))}}\d \nu\right) \d \mu(s)\nonumber\\
        & + \alpha \int_{B_r(x_0)} |\nabla v(x)|^{p-2}\nabla v(x) \cdot \nabla (v^{1-q}(x)w^p(x)) \d x:= J_1+2J_2+\alpha J_3. \label{eq7.3}
    \end{align}
   \noindent \textbf{Estimate of $J_1$:} Following similar arguments from \cite[Lemma $5.1$]{DKP2014}, combined with Lemma \ref{lmn2.9}, there exists a constant $C:=C(p,q)$ such that
    \begin{align*}
       & \iint_{B_r(x_0)\times B_r(x_0)} {{\mathcal{A}(v(x,y))(v^{1-q}(x)w^p(x)-v^{1-q}(y)w^p(y))}}\d \nu\nonumber \\
        \leq& -C\iint_{B_r(x_0) \times B_r(x_0)} {|\phi(x)-\phi(y)|^p w(y)^p }\d \nu\nonumber \\
        &+ C \iint_{B_r(x_0)\times B_r(x_0)} { \max\{\phi(x), \phi(y)\}^p|w(x)-w(y)|^p}\d \nu.
    \end{align*}
    Therefore, we have
    \begin{align}
        J_1 \leq& -C \int_{(0,1)}C_{N, s, p} \bigg(\iint_{B_r(x_0) \times B_r(x_0)} {|\phi(x)-\phi(y)|^p w(y)^p }\d \nu \bigg) \d \mu\nonumber \\
        &+ C\int_{(0,1)} C_{N,s,p} \left( \iint_{B_r(x_0)\times B_r(x_0)} { \max\{\phi(x), \phi(y)\}^p|w(x)-w(y)|^p}\d \nu\right)\d \mu. \label{eq7.4}
    \end{align}
    \noindent \textbf{Estimate of $J_2$:} Observe that \begin{enumerate}
        \item[(i)] $|v(x)-v(y)|^{p-2}(v(x)-v(y))\leq C(v(x))^{p-1}+C(v_-(y))^{p-1}$,
        \item[(ii)] $v^{1-q}(x)\leq d^{1-p}v^{p-q}(x)$,
        \item[(iii)] $(v_-(y))=0$ for all $y\in B_R(x_0)$.
     \end{enumerate}
     Using the above facts, we derive 
    \begin{align}
        J_2 \leq& \int_{(0,1)}C_{N, s, p}\left(\int_{\mathbb{R}^N\setminus {B_r(x_0)}}\int_{B_r(x_0)}{C\{(v(x))^{p-1} + v^{p-1}_-(y)\} v^{1-q}(x)w^p(x)}\d \nu \right) \d \mu \nonumber \\
        \leq & C \int_{(0,1)}C_{N, s, p} \bigg( \ess_{x\in \supp w} \int_{\mathbb{R}^N\setminus B_r(x_0)} \frac{1}{|x-y|^{N+sp}}\d y\nonumber \\
        &+d^{1-p}{4^{N+sp}} \int_{\mathbb{R}^N\setminus B_R(x_0)} (v_-(y))^{p-1} {\frac{1}{|y-x_0|^{N+sp}}} \d y \bigg)\bigg(\int_{B_r(x_0)}\phi^p w^p \d x \bigg) \d \mu \nonumber \\
        \leq & C \int_{(0,1)}C_{N, s, p} \bigg( \ess_{x\in \supp w} \int_{\mathbb{R}^N\setminus B_r(x_0)} \frac{1}{|x-y|^{N+sp}}\d y\bigg) \bigg(\int_{B_r(x_0)}\phi^p w^p \d x \bigg) \d \mu \nonumber \\
        &+C d^{1-p} {\sup_{s\in \Sigma} 4^{N+sp} } \sup_{s\in \Sigma}\bigg(\frac{r}{R}\bigg)^{sp}\sup_{s\in \Sigma}r^{-sp} [\operatorname{Tail}\big(v_-(y);x_0,R\big)]^{p-1}  \bigg(\int_{B_r(x_0)}\phi^p w^p \d x \bigg)\nonumber \\
        \leq &  C \bigg\{\int_{(0,1)}C_{N, s, p} \bigg( \ess_{x\in \supp w} \int_{\mathbb{R}^N\setminus B_r(x_0)} \frac{1}{|x-y|^{N+sp}}\d y \bigg) \d \mu \nonumber \\
        &+ d^{1-p} {\sup_{s\in \Sigma}\bigg(\frac{r}{R}\bigg)^{sp}}\sup_{s\in \Sigma}r^{-sp}[\operatorname{Tail}\big(v_-(y);x_0,R\big)]^{p-1} \bigg\} \bigg(\int_{B_r(x_0)}\phi^p w^p \d x \bigg). \label{eq7.5}
    \end{align}
    \noindent \textbf{Estimate of $J_3$:} Using Young's inequality, we get
    \begin{align}
    J_3\leq& (1-q) \int_{B_r(x_0)} |\nabla v|^{p} w^p v^{-q}\d x+p\int_{B_r(x_0)}|\nabla v|^{p-1} |\nabla w| v^{1-q} w^{p-1} \d x \nonumber \\
    =& -(q-1) \int_{B_r(x_0)} |\nabla v|^{p} w^p v^{-q}\d x\nonumber \\
    &+p\int_{B_r(x_0)} |\nabla w| v^{\frac{p-q}{p}}\left(\frac{q-1}{2(p-1)}\right)^{-\frac{p-1}{p}} \left(\frac{q-1}{2(p-1)}\right)^{\frac{p-1}{p}}   v^{\frac{q(1-p)}{p}} |\nabla v|^{p-1} w^{p-1} \d x \nonumber \\
    \leq& -(q-1) \int_{B_r(x_0)} |\nabla v|^{p} w^p v^{-q}\d x + C\int_{B_r(x_0)} |\nabla w|^p v^{{p-q}} \d x\nonumber \\
    &+ \frac{q-1}{2}  \int_{B_r(x_0)} |\nabla v|^{p} w^p v^{-q}\d x \nonumber \\
    =& -\frac{q-1}{2} \bigg(\frac{p}{p-q} \bigg)^p \int_{B_r(x_0)} |\nabla \phi|^{p} w^p \d x+C\int_{B_r(x_0)} |\nabla w|^p \phi^{{p}} \d x. \label{eq7.6}
    \end{align}
   Therefore, substituting \eqref{eq7.4}, \eqref{eq7.5}, and \eqref{eq7.6} into \eqref{eq7.3}, we obtain the desired conclusion. This completes the proof.
\end{proof}
We now prove the following weak Harnack inequality for supersolutions of \eqref{MP}.
\begin{thm}
    Let $u$ be a weak supersolution of \eqref{MP} with $u\geq 0$ in $B_R(x_0) \subset \Omega$. Then there exists a constant $C:=C(N,p,s,\Sigma)>0$ such that
    \begin{align*}
        \bigg(\fint_{B_{\frac{r}{2}}(x_0)}u^t \d x \bigg)^\frac{1}{t} \leq C \essi_{B_r(x_0)}u +C{\sup_{s\in \Sigma}\bigg(\frac{r}{R}\bigg)^{\frac{sp}{p-1}}} \operatorname{Tail}\big(u_-(y);x_0,R\big),
    \end{align*}
    where $r\in(0,1]$, $B_r(x_0)\subset B_{\frac{R}{2}}(x_0)$ and $0<t<\eta(p-1)$.
    \begin{proof}
        We prove the result for the case $1<p<N$. The case $p\geq N$ can be treated similarly, yielding the same conclusion. Let $0<r\leq 1$ and $\frac{1}{2}<m'<m\leq \frac{3}{4}$. Let $w\in C_c^\infty(B_{mr}(x_0))$ be such that $w(x)=1$ for all $x\in B_{m'r}(x_0)$, $0\leq w(x)\leq 1$ for all $x\in B_{mr}(x_0)$ with $|\nabla w|\leq \frac{4}{(m-m')r}$. We set $v=u+d$ and $\phi=v^{\frac{p-q}{p}}$ for $q\in (1,p)$ and $d>0$. By using the property of $w$, we have
        \begin{align}\label{eq7.8}
            F_1:=\int_{B_r(x_0)} \phi^p|\nabla w|^p \d x \leq \frac{Cr^{-p}}{(m-m')^p} \int_{B_{mr}(x_0)} \phi^p \d x.
        \end{align}
        Following a similar arguments as in \textbf{Estimate $I_1$} of Theorem \ref{bddness}, we get
        \begin{align}\label{eq7.9}
            F_2:=&\int_{(0,1)} C_{N,s,p} \left( \iint_{B_r(x_0)\times B_r(x_0)} \frac{ \max\{\phi(x), \phi(y)\}^p|w(x)-w(y)|^p}{|x-y|^{N+sp}}\d x\d y\right)\d \mu \nonumber \\
            \leq& \frac{Cr^{-p}}{(m-m')^p} \sup_{s\in \Sigma}r^{p-sp}\bigg( \int_{B_{mr}(x_0)} \phi^p \d x\bigg)\mu\{(0,1)\} \nonumber \\
           \leq& \frac{Cr^{-p}}{(m-m')^p} \bigg( \int_{B_{mr}(x_0)} \phi^p \d x\bigg).
        \end{align}
        Now observe that 
        \begin{align}\label{eq7.10}
             & {\int_{(0,1)} C_{N,s,p} \left( \ess_{x\in \supp w} \int_{\mathbb{R}^N\setminus B_r(x_0)}\frac{1}{|x-y|^{N+sp}} \d y\right) \d \mu }\nonumber \\
            \leq & C \sup_{s\in \Sigma}r^{-sp} \mu\{(0,1)\} \leq C r^{-p}.
        \end{align}
       Suppose that $\operatorname{Tail}\big(u_-(y);x_0,R\big)$ is positive. For sufficiently small $\epsilon>0$, we define
       \begin{align}\label{eq7.11}
           d:=\frac{1}{2}\sup_{s\in \Sigma}\bigg(\frac{r}{R}\bigg)^{\frac{sp}{p-1}}  \operatorname{Tail}\big(u_-(y);x_0,R\big)+\epsilon >0.
        \end{align}
        Thus, using \eqref{eq7.10} and \eqref{eq7.11}, we obtain
        \begin{align}\label{eq7.12}
            F_3:=&\bigg[ \int_{(0,1)} C_{N,s,p} \left( \ess_{x\in \supp w} \int_{\mathbb{R}^N\setminus B_r(x_0)}\frac{1}{|x-y|^{N+sp}} \right) \d \mu \nonumber \\
       & + d^{1-p}\sup_{s\in \Sigma}\bigg(\frac{r}{R}\bigg)^{sp}\sup_{s\in \Sigma}r^{-sp}[\operatorname{Tail}\big(u_-(y);x_0,R\big)]^{p-1} \bigg]\bigg(\int_{B_r(x_0)}\phi^p w^p \d x \bigg) \nonumber \\
       \leq & \frac{Cr^{-p}}{(m-m')^p} \bigg( \int_{B_{mr}(x_0)} \phi^p \d x\bigg).
        \end{align}
        Also, note that if $\operatorname{Tail}\big(u_-(y);x_0,R\big)$ is zero, then $d=\epsilon>0$. Again, using \eqref{eq7.10}, we obtain the same estimate as \eqref{eq7.12}. Now, applying the Sobolev inequality, along with \eqref{eq7.8}, \eqref{eq7.9}, \eqref{eq7.12} and Lemma \ref{lmn7.1}, we deduce
        \begin{align}\label{eq7.13}
            \bigg(\fint_{B_{m'r}(x_0)} v^{\frac{N(p-q)}{N-p}} \d x\bigg)^\frac{p}{p^*}= \bigg(\fint_{B_{m'r}(x_0)} \phi^{p^*} \d x\bigg)^\frac{p}{p^*} \leq &\bigg(\fint_{B_{mr}(x_0)} |\phi w|^{p^*} \d x\bigg)^\frac{p}{p^*} \nonumber \\
            \leq &(mr)^{p-N} \int_{B_{mr}(x_0)} |\nabla(\phi w)|^{p} \d x \nonumber \\
            \leq & \frac{C}{(m-m')^p} \bigg( \fint_{B_{mr}(x_0)} \phi^p \d x\bigg),
        \end{align} 
        where $p^*=\frac{pN}{N-p}$. Since, $q\in (1,p)$, using \eqref{eq7.13} and Moser iteration technique from \cite[Theorem $1.2$]{GT2001}, we get
        \begin{align}\label{eq7.14}
            \bigg(\fint_{B_{\frac{r}{2}}(x_0)}v^t \d x \bigg)^{\frac{1}{t}} \leq C \bigg(\fint_{B_{\frac{3r}{4}}(x_0)}v^{t'} \d x \bigg)^{\frac{1}{t'}}, ~0<t'<t<\frac{N(p-1)}{N-p}. 
        \end{align}
        Note that {$u_-\geq v_-$} and
        \begin{align}\label{eq7.15}
            \bigg(\fint_{B_{\frac{r}{2}}(x_0)}u^t \d x \bigg)^{\frac{1}{t}} \leq \bigg(\fint_{B_{\frac{r}{2}}(x_0)}v^t \d x \bigg)^{\frac{1}{t}}.
        \end{align}
        Let {$\kappa \in(0,1) $ in Lemma \ref{lmn6.4} and $t'=\kappa$}. Using \eqref{eq7.14} and \eqref{eq7.15}, we obtain
        \begin{align}\label{eq7.16}
            \bigg(\fint_{B_{\frac{r}{2}}(x_0)}u^t \d x \bigg)^{\frac{1}{t}} \leq & C \essi_{B_r(x_0)}v +C{\sup_{s\in \Sigma}\bigg(\frac{r}{R}\bigg)^{\frac{sp}{p-1}}} \operatorname{Tail}\big(v_-(y);x_0,R\big)\nonumber \\
            \leq & C \essi_{B_r(x_0)}v +C{\sup_{s\in \Sigma}\bigg(\frac{r}{R}\bigg)^{\frac{sp}{p-1}}} \operatorname{Tail}\big(u_-(y);x_0,R\big)
        \end{align}
        for any $0<t<\frac{N(p-1)}{N-p}.$ 
       \noindent Observe that for sufficiently small $\epsilon$, in \eqref{eq7.16}, $$ {d=\frac{1}{2}\sup_{s\in \Sigma}\bigg(\frac{r}{R}\bigg)^{\frac{sp}{p-1}}  \operatorname{Tail}\big(u_-(y);x_0,R\big)+\epsilon}.$$ Therefore, as $\epsilon \rightarrow 0$, we obtain the desired conclusion. This completes the proof.
    \end{proof}
\end{thm}

\section{Lower and Upper Semicontinuity} \label{sec7}
In this section, we examine the pointwise behavior of the solution. Let $u$ be a measurable function that is locally essentially bounded from below in $\Omega$, $B_\gamma(y)\subset \Omega$ with $\gamma \in (0,1]$, $a,b\in(0,1)$, $K>0$, and $\beta_- \leq \essi_{B_\gamma(y)}u$.


Inspired by Liao \cite{L2021}, we say that $u$ satisfies property $\mathcal{(D)}$: ``\textit{if there exists a constant $\rho \in(0,1)$, (depend on $a,K, \beta_-,$ and other data but independent of $\gamma$; sec also Lemma \ref{lmn8.3}) such that 
\begin{align*} 
|\{u\leq \beta_-+K\}\cap B_\gamma(y)| \leq \rho |B_\gamma(y)|, 
\end{align*} 
then it implies that $u\geq \beta_-+aK$ almost everywhere in $B_{b\gamma}(y).$}" 
\par The lower semicontinuous regularization of $u$ is defined by
\begin{align*}
    u_*(x):=\esslim_{y\rightarrow x}u(y)=\lim_{r\rightarrow 0} \essi_{y\in B_r(x)} u(y), \text{ for }x\in \Omega.
\end{align*}
\begin{rem}
   As $u$ is locally essentially bounded from below in $\Omega$, $u_*$ is well-defined at every point in $\Omega$. In addition, $u_*$ is lower semicontinuous.
\end{rem}
Let $u\in L^1_{loc}(\Omega)$. We denote by $\mathcal{F}$, the set of Lebesgue points of $u$, namely,
\begin{align*}
    \mathcal{F}=\bigg \{ x\in \Omega: |u(x)|<\infty, \lim_{r\rightarrow 0} \fint_{B_r(x)} |u(x)-u(y)| \d y=0 \bigg\}.
\end{align*}
Observe that, by Lebesgue differentiation theorem, we have $|\mathcal{F}|=|\Omega|$.
\begin{lem}[Theorem $2.1$, \cite{L2021}]\label{lmn8.2}
    Let $u$ be a measurable function, which satisfies the property $\mathcal{(D)}$, locally integrable and locally essentially bounded from below in $\Omega$. Then 
    \begin{align*}
        u(x)=u_*(x)
        \text{ for every } x\in \mathcal{F}.
        \end{align*}
        In particular, $u_*$ is a lower semicontinuous representation of $u$ in $\Omega$.
\end{lem}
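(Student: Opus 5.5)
This is Liao's Theorem 2.1, which the paper cites; its proof uses only measure theory and property $\mathcal{(D)}$, not the equation. The plan is to reproduce that argument. Fix $x\in\mathcal{F}$. The inequality $u_*(x)\le u(x)$ comes from the Lebesgue point property. For every $r$,
\[
\essi_{B_r(x)}u\le \fint_{B_r(x)}u\,\d y,
\]
and the right-hand side tends to $u(x)$ as $r\to0$, since $\fint_{B_r(x)}|u(y)-u(x)|\,\d y\to0$. Letting $r\to0$ gives $u_*(x)\le u(x)$.

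For the reverse inequality we argue by contradiction. Suppose $u_*(x)<u(x)$, and set $\beta_-:=u_*(x)$, which is finite because $u$ is locally essentially bounded below, and $K:=u(x)-u_*(x)>0$. Fix $\varepsilon>0$ small, to be chosen relative to $K$ and $a$. Since $\essi_{B_\gamma(x)}u$ increases to $u_*(x)$ as $\gamma\to0$, we have $\essi_{B_\gamma(x)}u\ge \beta_- -\varepsilon$ for small $\gamma$. So $\tilde\beta_-:=\beta_--\varepsilon$ is an admissible lower bound on $B_\gamma(x)$.

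The central step is to check the measure hypothesis in $\mathcal{(D)}$ with the levels $\tilde\beta_-$ and $\tilde K:=K/2$, say. By Chebyshev and the Lebesgue point property,
\[
\frac{|\{u\le \tilde\beta_-+\tilde K\}\cap B_\gamma(x)|}{|B_\gamma(x)|}\le \frac{1}{u(x)-\tilde\beta_--\tilde K}\fint_{B_\gamma(x)}|u(y)-u(x)|\,\d y\longrightarrow0 .
\]
Here the denominator is at least $K/2$. Because $\rho$ depends only on $a$, $\tilde K$, $\tilde\beta_-$ and the data, and not on $\gamma$, we can take $\gamma$ small enough that this ratio is at most $\rho$. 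Then $\mathcal{(D)}$ gives $u\ge \tilde\beta_-+a\tilde K$ a.e.\ in $B_{b\gamma}(x)$. Hence $\essi_{B_{b\gamma}(x)}u\ge u_*(x)-\varepsilon+aK/2$.

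Letting $\gamma\to0$ gives $u_*(x)\ge u_*(x)-\varepsilon+aK/2$. Choosing $\varepsilon<aK/2$ yields a contradiction, so $u(x)=u_*(x)$. The subtle point is that $\rho$ is uniform in $\gamma$ while $\tilde\beta_-$ is allowed to vary with $\varepsilon$; this is exactly why property $\mathcal{(D)}$ is stated with $\rho$ independent of $\gamma$.

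For the final assertion, $u_*$ is lower semicontinuous by the preceding remark, and $u_*=u$ on $\mathcal{F}$ with $|\Omega\setminus\mathcal{F}|=0$. So $u_*$ is a lower semicontinuous representative of $u$.
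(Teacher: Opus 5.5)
Your proof is correct and is essentially the argument of Liao's Theorem 2.1, which the paper invokes by citation without reproducing it. The delicate step is handled properly: you do not take $\beta_-=u_*(x)$, which is in general not an admissible lower bound on any ball. Instead you fix the levels $u_*(x)-\varepsilon$ and $K/2$ before shrinking $\gamma$, and this is exactly what lets you use the $\gamma$-independence of $\rho$ in property $\mathcal{(D)}$.
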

Next, we prove a De Giorgi type lemma for weak supersolution of \eqref{MP}.
\begin{lem}\label{lmn8.3}
    Let $u$ be a weak supersolution of \eqref{MP}. Let $B_r(x_0)\subset \Omega$ with $r \in (0,1]$, $a\in(0,1)$, $K>0$, $\beta_- \leq \essi_{B_r(x_0)}u$ and $\lambda_-\leq \essi_{\mathbb{R}^N}u$. Then there exists a constant $\rho=\rho(N,p,\Sigma, a, K, \beta_-, \lambda_-)\in (0,1)$ such that
    \begin{align}\label{eq8.5}
        |\{u\leq \beta_-+K\} \cap B_r(x_0)| \leq \rho|B_r(x_0)|,
    \end{align}
    implies $u\geq \beta_-+aK$ almost everywhere in $B_{\frac{3r}{4}}(x_0).$
\end{lem}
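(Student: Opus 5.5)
We argue by a De Giorgi iteration on the sublevel sets of $u$, in the spirit of Step 2 in the proof of Lemma \ref{lmn5.1} and of the proof of Lemma \ref{lmn6.2}. For $j\in\mathbb{N}\cup\{0\}$ set
\[
k_j:=\beta_-+aK+\frac{(1-a)K}{2^j},\qquad r_j:=\frac{3r}{4}+\frac{r}{2^{j+2}},\qquad \bar r_j:=\frac{r_j+r_{j+1}}{2},
\]
with $B_j:=B_{r_j}(x_0)$ and $\bar B_j:=B_{\bar r_j}(x_0)$. Note that $k_0=\beta_-+K$, $k_j\searrow\beta_-+aK$ and $r_j\searrow 3r/4$. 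Let $\phi_j:=(u-k_j)_-$ and $P_j:=|\{u<k_j\}\cap B_j|/|B_j|$. Fix cut-offs $w_j\in C_c^\infty(\bar B_j)$ with $w_j=1$ on $B_{j+1}$ and $|\nabla w_j|\le C2^j/r$. Since $\beta_-\le \essi_{B_r}u$, on $B_r$ we have $0\le\phi_j\le k_j-\beta_-\le K$.

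\textbf{Energy estimate.} We apply Lemma \ref{lmn3.2} (the Caccioppoli inequality for $(u-k)_-$ of a supersolution) with $\phi=\phi_j$ and $w=w_j$. The local term and the double integral over $B_j\times B_j$ are handled exactly as the estimates of $Q_1$ and $Q_3$ in Lemma \ref{lmn5.1}, using $\sup_{s\in\Sigma}r^{p-sp}\le 1$ for $r\le1$ and the finiteness of $\mu$. Both are bounded by $C2^{jp}r^{-p}K^p|\{u<k_j\}\cap B_j|$.

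\textbf{Tail term (main obstacle).} The step needing care is the tail term, since no smallness of the tail is assumed; this is why $\rho$ depends on $\lambda_-$ and $\beta_-$. Outside $B_j$ we bound $\phi_j(y)\le (k_j-u(y))_+\le (\beta_-+K-\lambda_-)_+$, because $u\ge\lambda_-$ a.e. in $\mathbb{R}^N$. For $x\in\bar B_j$ and $y\notin B_j$ we also have $|y-x_0|\le 2^{j+4}|y-x|$. Hence
\[
\int_{(0,1)}C_{N,s,p}\,\ess_{x\in\bar B_j}\int_{\mathbb{R}^N\setminus B_j}\frac{\phi_j^{p-1}(y)}{|x-y|^{N+sp}}\d y\,\d\mu
\le C\,2^{j(N+p)}\,r^{-p}\,(\beta_-+K-\lambda_-)_+^{p-1},
\]
where we again use $\sup_{s\in\Sigma}r^{-sp}\le r^{-p}$ and $\int_{(0,1)}C_{N,s,p}s^{-1}\d\mu<\infty$ (the latter needs $\bar s>0$, or the bound on $C_{N,s,p}$ as $s\to0$). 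Multiplying by $\int\phi_j w_j^p\le K|\{u<k_j\}\cap B_j|$ gives a contribution
\[
C2^{j(N+p)}r^{-p}K\,(\beta_-+K-\lambda_-)_+^{p-1}\,|\{u<k_j\}\cap B_j|.
\]

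\textbf{Sobolev step and iteration.} Since $\alpha>0$, Corollary \ref{cor3.2} together with the Sobolev inequality \eqref{sob} applied to $\phi_jw_j$ gives, as in \eqref{eq5.19},
\[
\Bigl(\frac{(1-a)K}{2^{j+1}}\Bigr)^p P_{j+1}^{1/\eta}\le C\,2^{j(N+2p)}\,M\,P_j,\qquad M:=K^p+K(\beta_-+K-\lambda_-)_+^{p-1}.
\]
This yields $P_{j+1}\le C_1C_2^jP_j^{\eta}$ with $C_1=C(N,p,\Sigma,\mu)\,[M/((1-a)K)^p]^{\eta}$, which depends only on the data listed, and $C_2=2^{\eta(N+3p)}$. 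The hypothesis \eqref{eq8.5} reads $P_0\le\rho$. We choose $\rho:=C_1^{-1/\beta}C_2^{-1/\beta^2}$ with $\beta=\eta-1$; this $\rho$ is independent of $r$, as required by property $\mathcal{(D)}$. Lemma \ref{MI Lemma} then gives $P_j\to0$, so $u\ge\beta_-+aK$ a.e. in $B_{3r/4}(x_0)$.
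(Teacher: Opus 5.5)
Your proposal is correct and follows essentially the same route as the paper: a De Giorgi iteration on the sublevel sets $\{u<k_j\}\cap B_j$ with the same radii and levels, the Caccioppoli inequality of Lemma \ref{lmn3.2}, control of the tail by $(\beta_-+K-\lambda_-)_+$ through the global lower bound $u\ge\lambda_-$, and Lemma \ref{MI Lemma}. The differences are only cosmetic. You use the $L^{\eta p}$ level-set Sobolev step (as in Lemma \ref{lmn6.2}), which gives exponent $\eta$, whereas the paper estimates $\int_{B_{j+1}}\phi_j$ via H\"older and obtains exponent $1+\frac1p-\frac1{p\eta}$. You also use the sharper bound $\phi_j\le K$ inside $B_r$, where the paper uses the single bound $L=(K+\beta_--\lambda_-)_+$ on all of $\mathbb{R}^N$.
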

\begin{proof}
    Without loss of generality, let $x_0=0$.
    For $j\in \mathbb{N}\cup\{0\} $ and $r\in (0,1]$, we set
    $$r_j={r}\left(\frac{3}{4}+\frac{1}{2^{j+2}}\right),~\bar{r_j}=\frac{r_j+r_{j+1}}{2},~ B_j=B_{r_j}(0), \bar{B_j}=B_{\bar{r_j}}(0),$$
    and 
    $$\gamma_j=\beta_-+aK+\frac{(1-a)M}{2^j},~\bar{\gamma_j}=\frac{\gamma_j+\gamma_{j+1}}{2},~\phi_j=(\gamma_j-u)_+,~\bar{\phi_j}=(\bar{\gamma_j}-u)_+.$$
    
    We observe that $\bar{\gamma}_{j}<\gamma_j$, $B_{j+1}\subset \bar{B}_j\subset B_j$ and $\bar{\phi}_j\leq \phi_j$. Let us consider a function $w_j\in C_c^\infty(\bar{B_j})$ such that {$0\leq w_j\leq 1$} in ${B_j}$,
     $w_j=1$ in $B_{j+1}$ and $|\nabla w_j|\leq \frac{2^{j+3}}{r}$. 
     Given that, $u\geq \lambda_-$ in $\mathbb{R}^N$, we apply the definitions of $\gamma_j$ and $\phi_j$, to obtain
     \begin{align}\label{eq8.6}
         \phi_j=(\gamma_j-u)_+ \leq (K+\beta_--\lambda_-)_+=L \text{ in } \mathbb{R}^N.
     \end{align}
     Let $$X_j=\{u<\gamma_j\}\cap B_j.$$
    Using the Caccioppoli estimate from Lemma \ref{lmn3.2}, we derive
     \begin{align}\label{eq8.7}
        \alpha & \int_{B_j} w_j^p|\nabla \phi_j|^p \d x \leq  C \Bigg[  \alpha \int_{B_j} \phi_j^p|\nabla w_j|^p \d x \nonumber\\
    & +\int_{(0,1)} C_{N,s,p} \left( \iint_{B_j\times B_j} { \max\{\phi_j(x), \phi_j(y)\}^p|w_j(x)-w_j(y)|^p}\d \nu\right)\d \mu \nonumber \\
  & + \int_{(0,1)} C_{N,s,p} \left( \ess_{x\in \supp w_j} \int_{\mathbb{R}^N\setminus B_j} \frac{\phi_j^{p-1}(y)}{|x-y|^{N+sp}}\d y \cdot \int_{B_j}\phi_j w_j^p \d x \right)\d \mu \Bigg] \nonumber \\
  &=C(\alpha H_1+H_2+H_3).
     \end{align}
     \noindent  \textbf{Estimate of $H_1$:} Using \eqref{eq8.6} and properties of $w_j$, we obtain
\begin{align}\label{eq8.8}
    H_1=\int_{B_j} \phi_j^p|\nabla w_j|^p \d x \leq C2^{jp}L^p r^{-p}|X_j|.
\end{align}
    \noindent  \textbf{Estimate of $H_2$:}  From \eqref{eq8.6} and the properties of $w_j$, we get 
\begin{align}\label{eq8.9}
    H_2=&\int_{(0,1)} C_{N,s,p} \left( \iint_{B_j\times B_j} \frac{ \max\{\phi_j(x), \phi_j(y)\}^p|w_j(x)-w_j(y)|^p}{|x-y|^{N+sp}}\d x\d y\right)\d \mu \nonumber \\
    \leq & {CL^p \int_{(0,1)} C_{N,s,p} \bigg( \int_{X_j}\int_{B_j}\frac{2^{(j+3)p}}{r^p} |x-y|^{-N+(p-sp)} \d x \d y\bigg) \d \mu} \nonumber \\
    \leq & C2^{jp}L^p \sup_{s\in \Sigma}r^{-sp} |X_j|\mu\{(0,1)\}     \leq C2^{jp}L^p r^{-p}|X_j|.
\end{align}
 \noindent  \textbf{Estimate of $H_3$:} For all $x\in \supp w_j \text{ and }  y\in \mathbb{R}^N\setminus B_j$, we have
\begin{align}\label{eq8.10}
    \frac{1}{|x-y|}\leq \frac{1}{|y|}\bigg( \frac{|x|}{|x-y|}+1\bigg)\leq \frac{1}{|y|} (2^{j+5}+1)\leq \frac{2^{j+6}}{|y|}.
\end{align}
By utilizing \eqref{eq8.6}, \eqref{eq8.10}, and the properties of $w_j$, we deduce that
\begin{align}\label{eq8.11}
    H_3=&\int_{(0,1)} C_{N,s,p} \left( \ess_{x\in \supp w_j} \int_{\mathbb{R}^N\setminus B_j} \frac{\phi_j^{p-1}(y)}{|x-y|^{N+sp}}\d y \cdot \int_{B_j}\phi_j w_j^p \d x \right)\d \mu \nonumber \\
    \leq & C L^p \sup_{s\in \Sigma} 2^{j(N+sp)}  \sup_{s\in \Sigma}r^{-sp} |X_j|\mu\{(0,1)\} \leq C L^p \sup_{s\in \Sigma} 2^{j(N+sp)} r^{-p} |X_j|.
\end{align}
Using \eqref{eq8.8}, \eqref{eq8.9}, and \eqref{eq8.11} in \eqref{eq8.7}, we derive
\begin{align}\label{eq8.12}
    \int_{B_j} w_j^p|\nabla \phi_j|^p \d x \leq C L^p \sup_{s\in \Sigma} 2^{j(N+p+sp)} r^{-p} |X_j|=C L^p r^{-p} 2^{j(N+p+\sup_{s\in \Sigma}sp)}|X_j|.
\end{align}
Employing \eqref{eq8.8}, \eqref{eq8.12}, the Sobolev inequality \eqref{sob}, and Hölder's inequality, we get
\begin{align}\label{eq8.13}
    \frac{(1-a)K}{2^{j+2}}|X_{j+1}|=&\int_{X_{j+1}}(\bar{\gamma}_{j}-\gamma_{j+1}) \d x \leq \int_{B_{j+1}}\bar{\phi}_j \d x \leq \int_{B_{j+1}}{\phi}_j \d x  \nonumber \\
    \leq &|X_j|^{1-\frac{1}{p\eta}} \bigg( \int_{B_{j}}\phi_j^{p\eta} w_j^{p\eta} \d x \bigg)^{\frac{1}{p\eta}} \nonumber \\
    \leq &C r^{1-\frac{N}{p}+\frac{N}{p\eta}}|X_j|^{1-\frac{1}{p\eta}}\bigg( \int_{B_{j}}|\nabla(\phi_j w_j)|^p \d x \bigg)^{\frac{1}{p}} \nonumber \\
    \leq& CL r^{-\frac{N}{p}+\frac{N}{p\eta}} 2^{\frac{j(N+p+\sup_{s\in \Sigma}sp)}{p}} |X_j|^{1-\frac{1}{p\eta}+\frac{1}{p}}.
\end{align}
Dividing both sides of \eqref{eq8.13} by $|B_{j+1}|$, we obtain
\begin{align}\label{eq8.14}
    \frac{|X_{j+1}|}{|B_{j+1}|} \leq C\frac{L r^{-\frac{N}{p}+\frac{N}{p\eta}}}{(1-a)K} 2^{j\big(2+\frac{N+\sup_{s\in \Sigma}sp}{p}\big)} \frac{|X_j|^{1-\frac{1}{p\eta}+\frac{1}{p}}}{|B_{j+1}|}.
\end{align}
We set $P_j=\frac{|X_{j}|}{|B_{j}|}$. Using the facts $|B_j|<2^N|B_{j+1}|$ and $r_j<r$ in \eqref{eq8.14}, we deduce
\begin{align*}
    Y_{j+1} \leq  C\frac{L }{(1-a)K} 2^{j\big(2+\frac{N+\sup_{s\in \Sigma}sp}{p}\big)} Y_j^{^{1-\frac{1}{p\eta}+\frac{1}{p}}}.
\end{align*}
By setting
$$C_1= C\frac{L }{(1-a)K}, C_1=2^{\big(2+\frac{N+\sup_{s\in \Sigma}sp}{p}\big)}, \beta=\frac{1}{p}-\frac{1}{p\eta}, \rho=C_1^{-\frac{1}{\beta}}C_2^{-\frac{1}{\beta^2}},$$ 
and applying the condition \eqref{eq8.5}, we derive 
$$P_0\leq \rho.$$
Therefore, by Lemma \ref{MI Lemma}, we deduce that $P_{j}\rightarrow 0$ as $j\rightarrow\infty$. Consequently, we have $u\geq \beta_-+aK$ almost everywhere in $B_{\frac{3r}{4}}(0)$. This completes the proof.
\end{proof}
We finally prove the following two main results concerning lower semicontinuity and upper semicontinuity as consequences of Lemmas \ref{lmn8.2} and \ref{lmn8.3}.
\begin{thm}\label{thm8.4}
    Let $u$ be a weak supersolution of \eqref{MP}. Then
    \begin{align*}
        u(x)=u_*(x)  \text{ for every } x\in \mathcal{F}.
    \end{align*}
     In particular, $u_*$ is a lower semicontinuous representation of $u$ in $\Omega$.
\end{thm}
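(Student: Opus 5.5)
The plan is to reduce Theorem \ref{thm8.4} to Lemma \ref{lmn8.2}, which already asserts $u=u_*$ on $\mathcal{F}$. Its hypotheses are: $u$ is measurable, locally integrable, locally essentially bounded from below in $\Omega$, and satisfies property $\mathcal{(D)}$. Measurability and local integrability are immediate from $u\in W^{1,p}_{loc}(\Omega)$, which is part of Definition \ref{def2.11}.

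For local boundedness from below, I would use Lemma \ref{lmn2.17}: since $u$ is a weak supersolution, $u_-$ is a weak subsolution. Theorem \ref{bddness}, applied to $u_-$ on any ball $B_r(x_0)\Subset\Omega$ with $r\le 1$ and $\delta=1$, gives
\[
\ess_{B_{r/2}(x_0)}u_-\le \operatorname{Tail}\Big(u_-;x_0,\tfrac r2\Big)+C\Big(\fint_{B_r(x_0)}u_-^p\,\d x\Big)^{1/p}.
\]
The right-hand side is finite because $u\in L^{p-1}_{sp,\mu}(\mathbb{R}^N)$ and $u\in L^p_{loc}$. Covering a compact subset of $\Omega$ by finitely many such balls then yields the local lower bound, so $u_*$ is well defined.

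For property $\mathcal{(D)}$, I would invoke Lemma \ref{lmn8.3} with $b=3/4$. Given $B_\gamma(y)\subset\Omega$ with $\gamma\in(0,1]$ and $\beta_-\le\essi_{B_\gamma(y)}u$, Lemma \ref{lmn8.3} says that smallness of $|\{u\le\beta_-+K\}\cap B_\gamma(y)|$ forces $u\ge\beta_-+aK$ a.e. in $B_{3\gamma/4}(y)$. This is exactly $\mathcal{(D)}$, with a $\rho$ that does not depend on $\gamma$.

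The main obstacle is that Lemma \ref{lmn8.3} also needs a global lower bound $\lambda_-\le\essi_{\mathbb{R}^N}u$, and $\rho$ depends on $\lambda_-$. A general supersolution is only in the tail space, so no such bound is available. To handle this I would revisit estimate $H_3$ in the proof of Lemma \ref{lmn8.3}: the global bound on $\phi_j$ outside $B_j$ is used only there. Without it, $\phi_j^{p-1}(y)\le C\big(L^{p-1}+u_-^{p-1}(y)\big)$ outside $B_\gamma(y)$, and the extra contribution is controlled by $\gamma^{-p}\,\big[\operatorname{Tail}(u_-;y,\gamma)\big]^{p-1}$ up to constants. Since the proof of Lemma \ref{lmn8.2} in \cite{L2021} only applies $\mathcal{(D)}$ on small balls around a fixed Lebesgue point $x$, I would replace $\lambda_-$ by a bound on this tail that is uniform for balls centered near $x$ with small radius. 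Such a bound holds because $u_-$ is locally bounded and lies in the tail space, so the corresponding $\rho$ stays uniform.

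Alternatively, I would first reduce to a globally bounded-below function by replacing $u$ with $\max\{u,\lambda\}$ outside a large ball. This modification only changes the equation by a tail-controlled source term, and the same iteration argument absorbs it. With $\mathcal{(D)}$ established in either form, Lemma \ref{lmn8.2} gives $u(x)=u_*(x)$ for every $x\in\mathcal{F}$. Finally, $u_*$ is lower semicontinuous by construction and agrees with $u$ almost everywhere because $|\mathcal{F}|=|\Omega|$, so it is a lower semicontinuous representative of $u$.
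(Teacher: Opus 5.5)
Your reduction is the paper's own: Theorem \ref{thm8.4} is stated there as a direct consequence of Lemma \ref{lmn8.2} and Lemma \ref{lmn8.3}, with no further argument. Your additions are substantive, not cosmetic. They address two points the paper does not check.

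\textbf{Local lower bound.} The paper never verifies that a weak supersolution is locally essentially bounded from below, which Lemma \ref{lmn8.2} requires. Your use of Lemma \ref{lmn2.17} together with Theorem \ref{bddness} settles this.

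\textbf{Global lower bound in Lemma \ref{lmn8.3}.} That lemma is proved only under $\lambda_-\le\essi_{\mathbb{R}^N}u$, which Theorem \ref{thm8.4} does not assume. Your change to $H_3$ removes this hypothesis. Since $u\ge\beta_-$ on $B_\gamma(y)$, you have $\phi_j\le K$ there, so $H_1$, $H_2$ and the Sobolev step are unchanged. Outside the ball, $\phi_j\le(\beta_-+K)_++u_-$. The far contribution is then $\lesssim 2^{j(N+p)}\gamma^{-p}K\big([(\beta_-+K)_+]^{p-1}+[\operatorname{Tail}(u_-;y,\gamma)]^{p-1}\big)|X_j|$, which scales like the other terms.

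\textbf{Why this suffices.} As a result, $\rho$ depends on $a$, on $(\beta_-+K)_+/K$ and on an upper bound for $\operatorname{Tail}(u_-;y,\gamma)/K$, instead of on $\lambda_-$. This is enough because the argument behind Lemma \ref{lmn8.2} applies $(\mathcal{D})$ only on balls $B_r(x)$ with $r\to0$ around a fixed Lebesgue point. There $K$ is fixed and $\beta_-=\essi_{B_r(x)}u\le u_*(x)\le u(x)<\infty$. The tail is bounded uniformly on these balls, by the local boundedness of $u_-$ together with $u\in L^{p-1}_{sp,\mu}(\mathbb{R}^N)$.

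Make sure the constant you call $L$ in the modified bound is this local quantity $(\beta_-+K)_+$, not the paper's $(K+\beta_--\lambda_-)_+$. With that fixed, the truncation alternative in your last paragraph is unnecessary.
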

As an immediate consequence of Theorem \ref{thm8.4}, we obtain the following result.
\begin{cor} \label{uppsem}
     Let $u$ be a weak subsolution of \eqref{MP}. Then
    \begin{align*}
        u(x)=u^*(x)  \text{ for every } x\in \mathcal{F}.
    \end{align*}
     In particular, $u^*$ is an upper semicontinuous representation of $u$ in $\Omega$.
\end{cor}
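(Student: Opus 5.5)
The statement to prove is Corollary \ref{uppsem}: if $u$ is a weak subsolution of \eqref{MP}, then $u(x)=u^*(x)$ at every Lebesgue point $x\in\mathcal{F}$. The plan is to reduce it to Theorem \ref{thm8.4} through the sign symmetry of the operator. Here $u^*$ is the upper semicontinuous regularization
\[
u^*(x):=\lim_{r\rightarrow 0}\ \ess_{y\in B_r(x)} u(y).
\]
By property (i) after Definition \ref{def2.11}, $u$ is a weak subsolution exactly when $v:=-u$ is a weak supersolution. This uses the oddness of $\mathcal{A}$ and of $|\nabla u|^{p-2}\nabla u$: replacing $u$ by $-u$ flips the sign of the left side of \eqref{eq2.10}. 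The function $v$ also lies in the right space, because $W^{1,p}_{loc}(\Omega)\cap L^{p-1}_{sp,\mu}(\mathbb{R}^N)$ is invariant under $u\mapsto -u$.

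Theorem \ref{thm8.4} is applied to $v$. Its hypotheses implicitly need $v$ to be locally essentially bounded from below, together with the global lower bound $\lambda_-\le \essi_{\mathbb{R}^N} v$ used in Lemma \ref{lmn8.3}. For $v=-u$, this means $u$ must be locally essentially bounded from above. That follows from Theorem \ref{bddness}, since $\ess_{B_{r/2}}u$ is bounded by the tail of $u_+$ plus the $L^p$ average of $u_+$, and both are finite. If the global bound of Lemma \ref{lmn8.3} is needed, I would assume it in the same way the theorem does.

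With these hypotheses, Theorem \ref{thm8.4} gives $v(x)=v_*(x)$ for every $x\in\mathcal{F}_v$. Both remaining identifications are direct from the definitions. First, $\mathcal{F}_v=\mathcal{F}_u$, because $|v(x)-v(y)|=|u(x)-u(y)|$. Second, since $\essi_{B_r(x)}(-u)=-\ess_{B_r(x)}u$, letting $r\to0$ gives $v_*=-u^*$. Therefore $-u(x)=-u^*(x)$ on $\mathcal{F}$.

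Finally, $u^*$ is upper semicontinuous because $-u^*=v_*$ is lower semicontinuous. Since $|\mathcal{F}|=|\Omega|$, $u^*$ agrees with $u$ almost everywhere, so it is an upper semicontinuous representative of $u$.

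The main obstacle is checking the hypotheses of Theorem \ref{thm8.4} for $-u$, specifically the local lower boundedness of $-u$, which comes from Theorem \ref{bddness}. Everything else is bookkeeping with signs.
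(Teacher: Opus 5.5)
Your proposal is correct and matches the paper, which simply records the corollary as an immediate consequence of Theorem \ref{thm8.4} applied to $-u$. The paper's reasoning uses exactly your three facts: $-u$ is a weak supersolution, $(-u)_*=-u^*$, and the Lebesgue point sets of $u$ and $-u$ coincide. Your extra care about the boundedness hypotheses is also appropriate. Local upper boundedness of $u$ does follow from Theorem \ref{bddness}. The global bound $\lambda_-$ used in Lemma \ref{lmn8.3} is an implicit assumption of Theorem \ref{thm8.4} itself, and for the corollary it becomes a global upper bound on $u$.
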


\noindent \textbf{Conflict of interest statement:} On behalf of the authors, the corresponding author states that there is no conflict of interest.
\newline
\textbf{Data availability statement:} Data sharing does not apply to this article as no datasets were generated or analyzed during the current study.



\section*{Acknowledgement}

SB would like to thank the Council of Scientific and Industrial Research (CSIR), India, for financial assistance to carry out this research work [grant no. 09/0874(17164)/ 2023-EMR-I]. SG gratefully acknowledges the financial support for this research work under ARG-MATRICS, grant No: ANRF/ARGM/2025/001570/MTR, Anusandhan National Research Foundation (ANRF), Government of India. RL expresses sincere gratitude for the financial support provided by the Ministry of Education (formerly known as MHRD), Government of India. SB, SG, and RL acknowledge the research facilities available at the Department of Mathematics, NIT Calicut. This work was completed while VK was visiting the Ghent Analysis \& PDE Center at Ghent University. VK gratefully acknowledges the financial support and excellent research facilities provided by the center.



\end{document}